\newcommand{\fp}{\mathfrak{p}}
\newcommand{\CC}{\mathbb{C}}
\newcommand{\A}{\mathbb{A}}
\newcommand{\Q}{\mathbb{Q}}
\newcommand{\R}{\mathbb{R}}
\newcommand{\Z}{\mathbb{Z}}
\newcommand{\cB}{\mathcal{B}}
\newcommand{\cH}{\mathcal{H}}
\newcommand{\cO}{\mathcal{O}}
\newcommand{\ord}{\operatorname{ord}}
\newcommand{\Ad}{\operatorname{Ad}}
\newcommand{\ad}{\operatorname{ad}}
\newcommand{\fin}{\operatorname{fin}}
\newcommand{\1}{\mathbf 1}
\DeclareMathOperator{\Sym}{Sym}
\DeclareMathOperator{\vol}{vol}
\DeclareMathOperator{\St}{St}
\newcommand{\GL}{\operatorname{{GL}}}
\newcommand{\lRS}{\ell_{\mathrm{RS}}}
\newtheorem{Theorem}{Theorem}[section]
\newtheorem{Lemma}[Theorem]{Lemma}
\newtheorem{Proposition}[Theorem]{Proposition}
\newtheorem{Fact}[Theorem]{Fact}
\newtheorem{remark}[Theorem]{Remark}
\begin{document}

\title
{
An Explicit Watson--Ichino Formula with CM Newforms
}
\author{Bin Guan}
\address{Data Science Institute\\Shandong University \\Jinan\\ China}
\email{bguan.math@sdu.edu.cn}

\date{\today}

\begin{abstract}
In this paper, we extend the work 
of Humphries--Khan \cite{humphries2020random} 
to establish an explicit version of Watson--Ichino formula for 
$L(1/2,f\otimes\ad g)$, 
where $f$ is a Hecke--Maass form and $g$ is a CM newform. 
\end{abstract}

\keywords{Watson--Ichino formula, CM form, $L$-function}

\maketitle

\tableofcontents


\section{Introduction}

The purpose of this paper is to establish an explicit Watson--Ichino formula 
for triple product $L$-functions of a special class of Hecke--Maass forms.
To begin with, let $\pi_1,\pi_2,\pi_3$ be 
irreducible unitary cuspidal automorphic representations 
of $\GL_2(\A)$ with the product of their central characters trivial,
where $\A$ is the adele ring over $\Q$.
One can consider the (complete)
triple product $L$-function $L(s,\pi_1\otimes\pi_2\otimes\pi_3)$
associated to them,
which was originally defined classically 
by Garrett \cite{garrett1987decomposition}
and was generalized further 
by Piatetski--Shapiro and Rallis \cite{piatetski1987rankin} 
using an adelic approach. 

Gross and Kudla \cite{gross1992heights}
established an explicit identity relating
central $L$-values and period integrals
(which are finite sums in their case),
when the $\pi_i$'s correspond to cusp forms 
of a prime level and weight $2$.
Watson \cite{watson2008rankin} generalized this identity
to higher levels and weights,
and Ichino \cite{ichino2008trilinear}
proved an adelic version of this period formula which works for
all the cases.
In this paper we study the case when the quaternion algebra 
in \cite{ichino2008trilinear} is $\GL_2$
and the \'etale cubic algebra is $\Q\times\Q\times\Q$ over $\Q$. 
Then the Ichino's formula
can be reformulated (cf. \cite{collins2020anticyclotomic}) as follows:
for any $\varphi_i=\otimes\varphi_{i,v}\in \pi_i$, $i=1,2,3$,
\[\frac
{\left|\int_{\A^\times\GL_2(\Q)\backslash\GL_2(\A)}
\varphi_1(g)\varphi_2(g)\varphi_3(g)\ dg\right|^2}
{\prod_{i=1}^3\int_{\A^\times\GL_2(\Q)\backslash\GL_2(\A)}
|\varphi_i(g)|^2\ dg}
=\frac{C}{2^3}\left(\frac{\pi}6\right)^2
\frac{L(\frac 12,\pi_1\otimes\pi_2\otimes\pi_3)}
{L(1,\pi_1,\Ad)L(1,\pi_2,\Ad)L(1,\pi_3,\Ad)}
\prod_v I_v',\]
where $\A^\times$ is diagonally embedded in $\GL_2(\A)$ as its center,
$C$ is defined so that $dg=C\prod_v d g_v$
is the Tamagawa measure on $\A^\times\GL_2(\Q)\backslash\GL_2(\A)$,
$L(s,\pi_i,\Ad)$ is the adjoint $L$-function attached to $\pi_i$,
and the local constants $I_v'$ are defined as in \eqref{def.localconst}.
Moreover, for $v =p< \infty$,
$I_p'=\vol(\Z_p^\times\backslash\GL_2(\Z_p))$
when all $\pi_{i,p}$ are unramified,
and $\varphi_{i,p}\in \pi_{i,p}$ are unit spherical vectors.

Many authors have derived several explicit versions of Watson--Ichino formula
in various cases. 
For example Nelson \cite{nelson2011equidistribution} extends Watson's formula 
and relates 
\[\left|\int_{\Gamma_0(q)\backslash\mathcal{H}}
y^k f(z)|g(z)|^2\ d\mu(z)\right|^2\quad
\text{ and }
\quad L(\tfrac 12,f\otimes g\otimes\bar g),\]
where $\mathcal{H}$ is the upper-half plane, 
$f$ is a Hecke--Maass form of level $1$ 
and $g$ is a newform of square-free level $q$
($g$ can be holomorphic of weight $k$ or a Maass form).
In this case the triple product $L$-function
$L(s,f\otimes g\otimes\bar g)$ can be factorized as
$L(s,f)L(s,f\otimes\ad g)$
by comparing Euler products.
Recently Humphries and Khan \cite{humphries2020random}
show an exact formula for
$L(1/2,f\otimes\ad g)$ (see \eqref{HK20mainthm}),
where $f$ is a Hecke--Maass form and $g$ is a dihedral Maass newform
(which associates to a Gr\"ossencharacter 
on $\Q(\sqrt{D})$ where 
$D\equiv 1\pmod 4$ is a positive squarefree fundamental discriminant).
With this formula 
\cite{humphries2020random} unconditionally gives a proof 
of the Gaussian moments conjecture
for the fourth moment of dihedral Maass newforms.
This explicit Watson--Ichino formula 
also has some applications in studying quantum variance.
Huang and Lester \cite{huang2020quantum} give 
an asymptotic formula for the harmonic weighted quantum variance of
the family of dihedral Maass forms 
on $\Gamma_0(D)$
with $D$ restricted by some congruence condition.

In this paper, we continue the work 
of Humphries--Khan \cite{humphries2020random} 
and Hu \cite{hu2017triple}
to establish Theorem \ref{WatsonIchinoCM}, 
an explicit version of Ichino's formula for 
$L(1/2,f\otimes\ad g)$, 
where $g=g_\Omega$ is a CM newform  
and $f$ is a Hecke--Maass form. 
These explicit formulas will have, for example, an application 
to quantum variance for CM newforms following the idea
of \cite{huang2020quantum}.

\subsection{Main result}\label{section.mainCM}

Let $q$ be a positive integer 
and $L^2(\Gamma_0(q)\backslash \cH)$
be the space of square-integrable functions on the upper-half plane
$f:\cH\to\CC$ such that $f(\gamma z)=f(z)$ for all $\gamma\in\Gamma_0(q)$.
The inner product in this space is defined by
\[\langle f,g\rangle_q:=\int_{\Gamma_0(q)\backslash \cH}
f(z)\overline{g(z)}\ d\mu(z),
\quad\text{where }d\mu(z)=y^{-2} dx\ dy, \ z=x+iy.\]
But for functions such that $f(\gamma z)=(cz+d)^kf(z)$
with $\gamma=\left(\begin{smallmatrix}a & b \\ c & d\end{smallmatrix}\right)$
(for example, holomorphic modular forms of weight $k$), $\langle f,g\rangle_q$
is defined to be the Petersson inner product whenever the integral converges:
\[\langle f,g\rangle_q:=\int_{\Gamma_0(q)\backslash \cH}
y^kf(z)\overline{g(z)}\ d\mu(z).\]
Notice that
$y^kf(z)\overline{g(z)}$ is $\Gamma_0(q)$-invariant in the latter case,
and the convergency holds if the inner product is defined on cusp forms.

An integer $D$ is a fundamental discriminant 
if either $D\neq 1$, $D\equiv 1\pmod 4$ and $D$ is squarefree,
or $4\mid D$, $\frac D4\equiv 2,3\pmod 4$ and $\frac D4$ is squarefree.
For each $D$ there exists a quadratic extension $E=\Q(\sqrt{D})$ of $\Q$ 
such that $E$ has discriminant $D$.
One can also define a (quadratic)
Dirichlet character modulo $|D|$ by the Kronecker symbol 
$\chi_D(n):=\left(\frac{D}{n}\right)_K$.

For an imaginary quadratic extension $E/\Q$ with 
negative fundamental discriminant $D<0$, 
consider a Hecke character $\Omega$
on $E^\times\backslash\A_E^\times$ 
(its associated classical Gr\"ossencharacter is also denoted by $\Omega$),
whose restriction on $\A^\times$ is trivial.
Assume that it is unramified everywhere at finite places.
At infinity we have $\Omega_\infty=(z/\bar z)^{n}$ for some $n\in\Z$.
Recall that, when $n>0$ and $\Omega$ does not factor 
through the norm map $N_E:\A_E^\times\to\A^\times$,
there is a cuspidal newform 
(cf. \cite{ribet1977galois} and \cite[Theorem 12.5]{iwaniec1997topics})
\[g_\Omega(z):=
\sum_{\mathfrak{a}}\Omega(\mathfrak{a})
(N\mathfrak{a})^n e(zN\mathfrak{a})
\in S_{2n+1}^*(\Gamma_0(|D|),\chi_D)\]
with complex multiplication
of level $|D|$, weight $2n+1$
and nebentypus $\chi_{D}$ such that
$L(s,g_\Omega)=L(s,\Omega)$.
Here the sum is over all integral ideals of $E$,
and $N\mathfrak{a}$ is the norm of $\mathfrak{a}$.

All the $L$-functions in this paper are complete without conductor,
for example,
\[L(s,\pi):=L_\infty(s,\pi)L_{\fin}(s,\pi)\] 
is defined as in \cite{ichino2008trilinear}
(instead of the one in \cite{humphries2020random} that
$\Lambda(s,\pi):=q(\pi)^{s/2}L(s,\pi)$).

We will prove the following
explicit Watson--Ichino formula.
See Theorem \ref{WatsonIchino} for the more general statement.

\begin{Theorem}\label{WatsonIchinoCM}
Let $q_1\mid q=|D|$ with $D<0$ a fundamental discriminant,
$\cB_0^*(q_1)$ be the set of (normalized) Hecke--Maass newforms
of weight $0$ and level $q_1$ with trivial nebentypus.
For any CM newform $g=g_\Omega\in S_{k}^*(\Gamma_0(q),\chi_D)$ 
and Hecke--Maass newform $f\in\cB_0^*(q_1)$ 
normalized such that the Petersson norms are
$\langle g,g\rangle_q=\langle f,f\rangle_q=1$, we have that, 
if $4\nmid q_1$ or $2$ is a Type-2 supercuspidal prime for $f$,
then
\[\begin{split}
\left|\langle f\cdot g,g\rangle_q\right|^2
=\left|\langle f,y^k|g|^2\rangle_q\right|^2
&=\frac{L(\tfrac 12,f)L(\tfrac 12,f\otimes\ad g)}
{L(1,\ad g)^2L(1,\Sym^2 f)}\frac {\nu_{q_1}}{8qq_1\nu_q}\\
&=\frac{L(\tfrac 12,f)L(\tfrac 12,f\otimes\ad g)}
{L(1,\ad g)^2L(1,\Sym^2 f)}
\frac {1}{8q^2}
\prod_{p\mid q,\ p\nmid q_1}(1+p^{-1})^{-1}
\end{split}\]
(notice that 
$|y^{k/2}g(z)|$ is $\Gamma_0(q)$-invariant),
where \[\nu_n:=[\Gamma_0(1):\Gamma_0(n)]=n\prod_{p\mid n}(1+p^{-1});\]
otherwise, when $4\mid q_1$ and $2$ is a Type-1 supercuspidal prime for $f$,
$\left|\langle f,y^k|g|^2\rangle_q\right|^2$
is $L_2(1,\Sym^2 f)=\frac 23$ times above.
For oldforms $(\iota_wf)(z):=f(wz)$ we have the same result for
\[
\langle \iota_{w_1}f,y^k|g|^2\rangle_q
\overline{\langle \iota_{w_2}f,y^k|g|^2\rangle_q}
\]
for any $w_1,w_2\mid \tfrac q{q_1}$.
\end{Theorem}

\begin{remark}
Let $D\equiv 1\pmod 4$ be 
a positive squarefree fundamental discriminant,
and $q_1\mid q=D$. 
\cite[Corollary 4.19]{humphries2020random} shows that,
for any dihedral Maass newform 
$g=g_\Omega\in \cB_0^*(\Gamma_0(q),\chi_D)$ 
and Hecke--Maass newform $f\in\cB_0^*(q_1)$, 
and for any $w_1,w_2\mid \tfrac q{q_1}$,
\begin{equation}\label{HK20mainthm}
\langle \iota_{w_1}f,|g|^2\rangle_q
\overline{\langle \iota_{w_2}f,|g|^2\rangle_q}
=\frac{L(\tfrac 12,f)L(\tfrac 12,f\otimes\ad g)}
{L(1,\ad g)^2L(1,\Sym^2 f)}
\frac {1+\epsilon_f}{2}\frac{\nu_{q_1}}{8qq_1\nu_q}.
\end{equation}
Theorem \ref{WatsonIchino} also leads to the fact that,
the above identity holds when $D$ is 
any positive fundamental discriminant
(either $D$ or $D/4$ is squarefree) and 
$q_1\mid q=D$, $4\nmid q_1$ or 2 is 
not an ``unramified'' dihedral supercuspidal prime (i.e. not a Type-1 prime) for $f$,
except that $\nu_q/\nu_{q_1}=\nu_{q/q_1}$ does not hold in this case.
(If $4\mid q_1$ and $2$ is ``unramified supercuspidal'' for $f$
then the result is multiplied by $\frac 23$.) 

One may notice the extra condition does not show up in \cite{humphries2020random}.
The reason is that they assumed $D$ (and therefore $q_1$) squarefree.
Otherwise, if $4\mid q_1$, that is to say, if $\pi_{f,2}$ is supercuspidal 
(this is the only possible case because $16\nmid D$),
the local $L$-factor $L_p(1,\Sym^2 f)$
at $p=2$ depends on the ``type'' of $\pi_{f,2}$,
which equals either $1$ or $\tfrac 23$.
More details can be found in Section \ref{L(Ad)}.
\end{remark}

\begin{remark}
This explicit Watson--Ichino formula may have some potential applications,
for example, in studying quantum variances, following the idea of
Huang and Lester \cite{huang2020quantum}.
To study the distribution of $L^2$-mass for certain forms,
for example, dihedral Maass forms or CM forms $g_\Omega$,
define
\[\mu_{\Omega}(\psi):=\langle \psi g_\Omega,g_\Omega\rangle_q
=\int_{\Gamma_0(q)\backslash \cH}
y^{k}\psi(z)|g_\Omega(z)|^2\ d\mu(z)\]
for any smooth test function $\psi:\Gamma_0(q)\backslash \cH\to\CC$ with mean zero
which decays rapidly in the cusp,
where $q=|D|$ is the level of $g_\Omega$,
and $k$ the weight of $g_\Omega$.
The proposed quantum variance corresponding to these CM forms 
could be defined by a sum of the form 
\[Q(\psi;K):=\sum_{k\leq K}|\mu_{\Omega}(\psi)|^2\]
as $K\to\infty$.
Theorem \ref{WatsonIchinoCM} gives an explicit formula to write 
the summands as the central values of certain $L$-functions,
when $\psi$ is a Hecke--Maass cuspidal form (old or new).
It is possible to establish an asymptotic formula,
which relates the (harmonic weighted) quantum variance of
the family of CM forms on $\Gamma_0(|D|)$,
to its ``classical variance'' $V(\psi)$.
See \cite{luo2004quantum,huang2020quantum} for more details.
\end{remark}

\subsection{Organization of the paper}

This paper is organized as follows. 
In Section \ref{section.globalcalculation}
we fix notations and normalization,
recall the Watson--Ichino formula
in classical language and 
show how the local constants can be assembled 
into the global result.

By definition a CM form $g$
is associated with a Gr\"ossencharacter $\Omega$
of an imaginary quadratic extension $E/\Q$ with discriminant $D<0$.
When $D$ is squarefree, the main result is nothing new comparing with 
Humphries--Khan's version,
except that the archimedean local constants are
different (see Proposition \ref{localconst.infty}), 
which has been calculated in \cite{watson2008rankin}.
But when $4\mid D$, the general case cannot be avoided
when the levels of $f$ and $g$ are not squarefree.
More precisely, 
for the new case when $\pi_{f,v}$ is supercuspidal,
inspired by the work of Hu \cite{hu2016cuspidal,hu2017triple},
in Section \ref{section.supercuspidal}
we will deal with the Kirillov model of $\pi_{f,v}$
and calculate some special values 
of Whittaker function of a new vector.
Finally,
following ideas 
in \cite{michel2010subconvexity,hu2016cuspidal,humphries2020random}
we will calculate in Section \ref{section.localcalculation}
the local constants $I_v'$
(Propositions 
\ref{localconst.special}, \ref{localconst.unram}, \ref{localconst.supercuspidal})
for $\pi_{f,v}$ special, spherical, and supercuspidal respectively, 
which completes the proof of the Main Theorem \ref{WatsonIchino}.


\section{An explicit version of Watson--Ichino Formula}
\label{section.globalcalculation}

Let $q$ be a positive integer, $q_1\mid q$ and 
$f\in\cB_0^*(q_1)$ be a Hecke--Maass cuspidal newform or 
$f\in S_{k}^*(\Gamma_0(q_1))$ a holomorphic cuspidal newform with trivial nebentypus. 
Define the adelic lift of $f$ by
\[\varphi_f(g):=\big((y^{k/2}f)\mid_k g_\infty\big)(i)\]
with $g=\gamma g_\infty k_0$ given by the strong approximation
where $\gamma\in\GL_2(\Q)$, 
$g_\infty=(\begin{smallmatrix}a&b\\c&d\end{smallmatrix})\in\GL_2^+(\R)$, 
and $k_0\in K_0(q_1)$.
(When $f$ is a newform with nebentypus $\chi_f$, the adelic lift 
is defined to be the above $\varphi_f(g)$ times $\tilde\chi_f(k_0)$,
where $\tilde\chi_f$ is the character of $K_0(q_1)$ given by applying $\chi_f$ to the lower-right entry.)
Let $\pi_f=\otimes_v\pi_{f,v}$ be the cuspidal automorphic representation of $\GL_2(\Q)$
generated by $\varphi_f$.

For finite places $v=p$, we know $\pi_{f,p}$ is 
an unramified principal series representation
if $p\nmid q_1$, and a special representation
(an unramified twist of the Steinberg representation) if $p\parallel q_1$.
When $p^2\mid q_1$, we recall a certain classification of such $\pi_{f,p}$ 
(cf. \cite[Section 2.1.5]{nelson2014bounds}):
\begin{itemize}
	\item \textbf{Type 1.} $\pi_{f,p}$ is an 
	``unramified'' supercuspidal representation,
	i.e. $\pi_{f,p}\simeq\pi_{f,p}\otimes\eta_p$, where 
	$\eta_p$ is the unique nontrivial 
	unramified quadratic character of $\Q_p^\times$.
	Equivalently $\pi_{f,p}$ is a dihedral supercuspidal representation
	associated with 
	an unramified quadratic field extension $E_p/\Q_p$ 
	and a character of $E_p^\times$ that is not trivial 
	on the kernel of the norm map $N_{E_p/\Q_p}:E_p^\times \to \Q_p^\times$.
	In this case we call $p$ a \textbf{Type-1 supercuspidal prime} for $f$.
	(Actually $p$ is called an unramified supercuspidal prime 
	in some other papers, for example, \cite{banerjee19supercuspidal}.
	The reason we rename it in this paper is that, 
	to call it ``unramified'' one might 
	confuse it with the spherical representation.)
	\item \textbf{Type 2.} $\pi_{f,p}$ is supercuspidal satisfying 
	$\pi_{f,p}\not\simeq\pi_{f,p}\otimes\eta_p$,
	with $\eta_p$ above.
	Again in this case we call $p$ a \textbf{Type-2 supercuspidal prime} for $f$.
	\item \textbf{Type 3, 4, 5.} $\pi_{f,p}$ is a ramified principal series,
	or a ramified twist of the Steinberg representation.
\end{itemize}
In this paper we focus on Type 1 and Type 2, 
i.e. $\pi_{f,p}$ is supercuspidal whenever $p^2\mid q_1$.
Actually Types 1 and 2 cover all possibilities in Theorem \ref{WatsonIchinoCM}: 
when $q_1\mid q=|D|$ with $D$ a fundamental discriminant, 
the only possible $p$ such that $p^2\mid q_1$ is $p=2$;
but the conductor exponent of $\pi_{f,p}$ 
is $\geq 4$ for the other three types when $p=2$
(because any character of $\mathbb{Q}_2^\times$ cannot have conductor exponent $1$),
while $16$ can never divide a fundamental discriminant $D$.

\begin{Theorem}\label{WatsonIchino}
Let $q$ be a positive integer, $q_1\mid q$, and 
$\chi$ be a primitive Dirichlet character modulo $q$.
Assume that $f\in\cB_0^*(q_1)$ is a Hecke--Maass newform
satisfying that, 
the corresponding local automorphic representation
$\pi_{f,p}$ is supercuspidal whenever $p^2\mid q_1$.
Then, for any newform $g\in S_{k}^*(q,\chi)$ or $g\in\cB_0^*(q,\chi)$
which is a Hecke eigenform, 
normalized such that 
$\langle g,g\rangle_q=\langle f,f\rangle_q=1$, we have that, 
\[\left|\langle f,y^{k}|g|^2\rangle_q\right|^2
=\frac{L(\tfrac 12,f)L(\tfrac 12,f\otimes\ad g)}
{L(1,\ad g)^2L(1,\Sym^2 f)}
\frac {1}{8q^2}\cdot
\mathcal{C}_\infty\prod_{p}\mathcal{C}_p,\]
where 
\[\mathcal{C}_p=\begin{cases}
1&\text{if }p\nmid q,\\
(1+p^{-1})^{-1}&\text{if }p\mid q,\  p\nmid q_1,\\
1&\text{if }p\mid q,\ p\parallel q_1,\\
(1+p^{-1})^{-1}&\text{if }p\mid q,\ p^2\mid q_1,
\text{ $p$ is a Type-1 supercuspidal prime for $f$},\\
1&\text{if }p\mid q,\ p^2\mid q_1,
\text{ $p$ is a Type-2 supercuspidal prime for $f$};
\end{cases}\]
and $\mathcal{C}_\infty=\begin{cases}1&k>0\\\tfrac{1+\epsilon_f}{2}&k=0\end{cases}$
with $\epsilon_f\in\{\pm 1\}$ the parity of $f$.
For oldforms $(\iota_wf)(z):=f(wz)$ we have the same result, i.e. 
\[
\langle \iota_{w_1}f,y^{k}|g|^2\rangle_q
\overline{\langle \iota_{w_2}f,y^{k}|g|^2\rangle_q}
=\frac{L(\tfrac 12,f)L(\tfrac 12,f\otimes\ad g)}
{L(1,\ad g)^2L(1,\Sym^2 f)}
\frac {1}{8q^2}\cdot
\mathcal{C}_\infty\prod_{p}\mathcal{C}_p,
\]
for any $w_1,w_2\mid \tfrac q{q_1}$, any normalized Hecke--Maass newform $f\in\cB_0^*(q_1)$,
and any normalized Hecke newform $g\in S_{k}^*(q,\chi)$ or $g\in\cB_0^*(q,\chi)$.
\end{Theorem}

\begin{proof}
Using the notations 
in \cite[Section 4.2]{humphries2020random},
we denote by $\varphi_1,\varphi_2,\varphi_3$
the adelic lifts of $g,\bar g,\iota_wf$ respectively,
and by $\pi_i$ the cuspidal automorphic representation of $\GL_2$
generated by $\varphi_i$ ($i=1,2,3$).
Here we have $\pi_2=\tilde\pi_1$ (the contragredient).
Let $\tilde g\in S_{k}^*(q,\bar\chi)$ or $\cB_0^*(q,\bar\chi)$ 
be a Hecke eigenform 
such that $g$ and $\overline{\tilde g}$ are
both associated to the same newform,
and $\tilde\varphi_1$ be
the adelic lifts of $\tilde g$ 
(and define $\tilde\varphi_2,\tilde\varphi_3$ respectively).

We fix a $\GL(2,\Q_v)$-invariant bilinear local pairing $\langle\cdot,\cdot\rangle$
on $\pi_{i,v}\otimes\tilde\pi_{i,v}$ for each place $v$ and each $i = 1, 2, 3$, 
and use this to define a pairing $\langle\cdot,\cdot\rangle$ on
$\Pi_{v}\otimes\tilde\Pi_{v}$ (where $\Pi_v=\pi_{1,v}\otimes\pi_{2,v}\otimes\pi_{3,v}$)
determined on simple tensors $\varphi_v:=\varphi_{1,v}\otimes
\varphi_{2,v}\otimes\varphi_{3,v}$ and 
$\tilde\varphi_v:=\tilde\varphi_{1,v}\otimes
\tilde\varphi_{2,v}\otimes\tilde\varphi_{3,v}$
by 
\[\langle\varphi_{v},\tilde\varphi_{v}\rangle
:=\langle\varphi_{1,v},\tilde\varphi_{1,v}\rangle
\langle\varphi_{2,v},\tilde\varphi_{2,v}\rangle
\langle\varphi_{3,v},\tilde\varphi_{3,v}\rangle.\]
Note that this is unique up to nonzero scalar. 
Then we define
\begin{gather}
I_v(\varphi_v\otimes\tilde\varphi_v)
:=\int_{Z(F_v)\backslash\GL_2(F_v)}
\prod_{i=1}^3\langle\pi_{i,v}(g_v)\varphi_{i,v},
\tilde\varphi_{i,v}\rangle\ dg_v,\nonumber\\
I'_v(\varphi_v\otimes\tilde\varphi_v)
:=\frac{L(1,\pi_{1,v},\Ad)L(1,\pi_{2,v},\Ad)L(1,\pi_{3,v},\Ad)}
{\zeta_{F_v}(2)^2L(\tfrac 12,\pi_{1,v}\otimes\pi_{2,v}\otimes\pi_{3,v})}
\frac{I_v(\varphi_v\otimes\tilde\varphi_v)}
{\langle\varphi_{v},\tilde\varphi_{v}\rangle}.\label{def.localconst}
\end{gather}

We follow the normalization of local Haar measures
in \cite[Section 4.2]{humphries2020random}.
That is, the Haar measure $dg_v$ on $Z(F_v)\backslash\GL_2(F_v)$
at any non-archimedean place
is defined such that,
under the decomposition of $dg_v$ induced by the Iwasawa decomposition,
the maximal compact subgroup $\GL_2(\Z_p)$
has volume $1$;
and the Haar measure at any real place is
$dg_v:=dx_v\cdot|y_v|_v^{-1}d^\times y_v\cdot dk_v$
with $g_v=(\begin{smallmatrix}y_v&x_v\\0&1\end{smallmatrix})k_v$, $k_v\in K_v$,
where $dk_v$ is the Haar measure on $K_v=SO(2)$ with volume $1$.

The Watson--Ichino formula gives
\begin{multline}\label{ClassicWatsonIchino}
\int_{\Gamma_0(q)\backslash\mathcal{H}}
y^{k}|g(z)|^2(\iota_{w_1}f)(z)\ d\mu(z)
\overline{\int_{\Gamma_0(q)\backslash\mathcal{H}}
y^{k}|g(z)|^2(\iota_{w_2}f)(z)\ d\mu(z)}\\
=\frac{\mathcal{C}_\infty}{8\nu_q}
\frac{L(\tfrac 12,f)L(\tfrac 12,f\otimes\ad g)}
{L(1,\ad g)^2L(1,\Sym^2 f)}
\prod_{p\mid q}I_p'(\varphi_p\otimes\tilde\varphi_p),
\end{multline}
where $\nu_q=q\prod_{p\mid q}(1+p^{-1})$. 
This formula differs with 
the one given in \cite[Section 4.3]{humphries2020random},
because the local constant at infinity becomes 
$I'_\infty(\varphi_\infty\otimes\tilde\varphi_\infty)
=\mathcal{C}_\infty$,
given by Proposition \ref{localconst.infty},
and also because the $L$-functions in \cite{humphries2020random}
are defined with conductors.

Notice that (cf. \cite{watson2008rankin} or
\cite{loeffler2012computation}) for $p\mid q$,
the local component $\pi_{1,p}$ of $g$ 
is a unitarizable ramified principal series representation 
$\omega_{1,p}\boxplus\omega_{2,p}$, 
where the unitary characters 
$\omega_{1,p},\omega_{2,p}$ of $\Q_p^\times$
have conductor exponents 
\[c(\omega_{1,p})=c((\chi)_p)=\ord_p(q)>0
\quad\text{and}\quad
c(\omega_{2,p})=0;\]
and $\pi_{2,p}=\tilde\pi_{1,p}=
\omega_{2,p}^{-1}\boxplus\omega_{1,p}^{-1}$.
(Here $(\chi)_p$ is the local component of the Hecke character
corresponding to $\chi$.)
Also, $\varphi_{1,p},\varphi_{2,p},
\tilde\varphi_{1,p},\tilde\varphi_{2,p}$ 
are all local newforms in corresponding representations.
However, in this paper, 
the assumption that $q$ might not be squarefree,
leads to more cases for $\varphi_p\otimes\tilde\varphi_p$ than
those listed in the proof of 
\cite[Corollary 4.19]{humphries2020random}.
We list all the cases for $\pi_{3,p}$ as follows. 
\begin{enumerate}[(i)]
	\item When $p\parallel q_1$, 
	the local component $\pi_{3,p}$ of $f$ 
	is a special representation $\St_{\omega_{3,p}}$, 
	where $\omega_{3,p}$ is either the trivial character 
	or the unramified quadratic character of $\Q_p^\times$. 
	\item When $p\nmid q_1$, 
	the local component $\pi_{3,p}$ of $f$ 
	is a unitarizable unramified principal series representation 
	$\omega_{3,p}\boxplus\omega_{3,p}^{-1}$, 
	where $p^{-1/2} < |\omega_{3,p}(p)| < p^{1/2}$ and $c(\omega_{3,p}) = 0$.
	\item When $p^2\mid q_1$, under our assumption of this theorem,
	the local component $\pi_{3,p}$ of $f$ 
	is a supercuspidal representation with 
	trivial central character and $c(\pi_{3,p})=\ord_p(q_1)$.
\end{enumerate}
In all these cases $\varphi_{3,p}$ and $\tilde\varphi_{3,p}$
are translates of local newforms by 
$\pi_{3,p}(\begin{smallmatrix}w_1^{-1}&0\\0&1\end{smallmatrix})$ 
and $\tilde\pi_{3,p}(\begin{smallmatrix}w_2^{-1}&0\\0&1\end{smallmatrix})$
respectively.
(When $p\nmid w_1$, $\varphi_{3,p}$ is just the local newform;
and so is it for $w_2$ and $\tilde\varphi_{3,p}$.)

We respectively apply Propositions 
\ref{localconst.special}, \ref{localconst.unram}, \ref{localconst.supercuspidal}
with $F_v = \Q_p$, $q_v = p$ and $m_v=\ord_p(q)$ 
to give the local constants $I_p'(\varphi_p\otimes\tilde\varphi_p)$.
\end{proof}

The following propositions determine all the local constants $I_v'$
we need in the above proof. 

\begin{Proposition}[{\cite[Theorem 3]{watson2008rankin}}]\label{localconst.infty}
For $F_v\simeq \R$, let $k(\pi_{v})\in\Z$ denote the weight of $\pi_{v}$ 
and let $\epsilon\in \{1, i, -1, -i\}$ denote the local root number. Then
\[I_v'(\varphi_v\otimes\tilde\varphi_v)=\begin{cases}
1&\text{if }k(\pi_{1,v})=-k(\pi_{2,v})>k(\pi_{3,v})=0,\\
\tfrac{1+\epsilon_1\epsilon_2\epsilon_3}{2}&
\text{if }k(\pi_{1,v})=k(\pi_{2,v})=k(\pi_{3,v})=0.
\end{cases}\]
\end{Proposition}

Now let $F_v$ be a nonarchimedean local field 
with uniformizer $\varpi_v$ and cardinality $q_v$ of the residue field. 
The proof of the followings can be found in the next section.

\begin{Proposition}
[cf. {\cite[Proposition 4.16]{humphries2020random}}]\label{localconst.special}
Let $\pi_{1,v}=\omega_{1,v}\boxplus\omega_{2,v}$
and $\pi_{2,v}=\tilde\pi_{1,v}
=\omega_{2,v}^{-1}\boxplus\omega_{1,v}^{-1}$
be principal series representations of $\GL_2(F_v)$ 
for which the characters $\omega_{1,v},\omega_{2,v}$ of $F_v^\times$
have levels (i.e. conductor exponents) $c(\omega_{1,v}) = m_v>0$ 
and $c(\omega_{2,v}) = 0$, 
and let $\pi_{3,v}=\St_{\omega_{3,v}}$ 
be a special representation with $c(\omega_{3,v}) = 0$ 
and $\omega_{3,v}^2 = \1$. 
Suppose that $\pi_{1,v},\pi_{2,v},\pi_{3,v}$ 
are irreducible and unitarizable, 
so that $\omega_{1,v},\omega_{2,v},\omega_{3,v}$ are unitary. 
Then if $\varphi_{1,v},\varphi_{2,v},\varphi_{3,v},
\tilde\varphi_{1,v},\tilde\varphi_{2,v},\tilde\varphi_{3,v}$ 
are all local newforms,
\[I_v'(\varphi_v\otimes\tilde\varphi_v)
=q_v^{-m_v}(1+q_v^{-1}).\]
This also holds if either or both 
$\varphi_{3,v}$ and $\tilde\varphi_{3,v}$ 
are translates of local newforms by 
$\pi_{3,v}(\begin{smallmatrix}\varpi_v^{-l_1}&0\\0&1\end{smallmatrix})$ 
and $\tilde\pi_{3,v}(\begin{smallmatrix}\varpi_v^{-l_2}&0\\0&1\end{smallmatrix})$ 
respectively, where $0\leq l_1,l_2\leq m_v-1$.
\end{Proposition}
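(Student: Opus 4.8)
The plan is to compute the local integral $I_v(\varphi_v\otimes\tilde\varphi_v)$ directly in the Kirillov model, following the strategy of Michel--Venkatesh and Hu that underlies \cite[Proposition 4.16]{humphries2020random}. First I would set up matrix coefficients: since $\pi_{3,v}=\St_{\omega_{3,v}}$ is a special representation with unramified $\omega_{3,v}$ of order dividing $2$, its newvector $\varphi_{3,v}^\circ$ is fixed by $\Gamma_0(\varpi_v)$, and the matrix coefficient $g_v\mapsto\langle\pi_{3,v}(g_v)\varphi_{3,v}^\circ,\tilde\varphi_{3,v}^\circ\rangle$ is an explicit bi-$\Gamma_0(\varpi_v)$-invariant function on $\PGL_2(F_v)$ whose values on the Cartan/Iwasawa cells are classical (decay like $q_v^{-j/2}$ times a sign on $\mathrm{diag}(\varpi_v^j,1)$). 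For the translated vectors $\pi_{3,v}(\mathrm{diag}(\varpi_v^{-l_i},1))\varphi_{3,v}^\circ$ one simply absorbs the translations into the integration variable, shifting the support of the test function; the key point, already used in the cited proposition, is that the resulting shift stays ``inside'' the conductor range $0\le l_i\le m_v-1$ so that the $\pi_{1,v}\otimes\pi_{2,v}$ part is unaffected.

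Next I would handle the $\pi_{1,v}\otimes\pi_{2,v}=\pi_{1,v}\otimes\tilde\pi_{1,v}$ contribution. Here $\varphi_{1,v}^\circ$ is the newvector of the ramified principal series $\omega_{1,v}\boxplus\omega_{2,v}$ with $c(\omega_{1,v})=m_v$, fixed by $\Gamma_0(\varpi_v^{m_v})$, and its matrix coefficient against $\tilde\varphi_{1,v}^\circ$ is supported, up to center, on $\Gamma_0(\varpi_v^{m_v})$-double cosets; by Iwasawa decomposition $\GL_2(F_v)=\bigsqcup_j Z N A(\varpi_v^j) K$ one reduces the whole integral over $Z\backslash\GL_2$ to a sum over $j\ge 0$ of the product of the three matrix coefficients integrated over $N\times K_0(\varpi_v^{m_v})$-orbits. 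The ramified factor forces the $K$-integral to land in $K_0(\varpi_v^{m_v})$ and kills everything except $j=0$ and $j=1$ (the two cells meeting the support of the Steinberg coefficient inside $K_0(\varpi_v^{m_v})$), producing a short finite sum. Assembling: $I_v=\mathrm{vol}$-factor $\times\big(c_0+c_1 q_v^{-1/2}\cdot(\text{sign})\cdot q_v^{-1/2}\big)$, and after multiplying by the normalizing ratio of local $L$-factors in \eqref{def.localconst} — where $L(1/2,\pi_{1,v}\otimes\pi_{2,v}\otimes\pi_{3,v})=L(1/2,\pi_{3,v})L(1/2,\pi_{3,v}\otimes\mathrm{ad}\,\pi_{1,v})$ and $L(1,\pi_{1,v},\mathrm{Ad})$ both carry the ramified data — the dependence on $\omega_{1,v}$ collapses to the clean answer $q_v^{-m_v}(1+q_v^{-1})$.

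The main obstacle I expect is the bookkeeping of the double-coset decomposition of $Z(F_v)\backslash\GL_2(F_v)$ under the \emph{pair} of distinct congruence subgroups $K_0(\varpi_v)$ (from $\pi_{3,v}$) and $K_0(\varpi_v^{m_v})$ (from $\pi_{1,v},\pi_{2,v}$), together with the precise values of the Steinberg matrix coefficient on each surviving cell and the volumes of the orbits — these are exactly the computations that are routine in principle but error-prone, and where the translation by $\mathrm{diag}(\varpi_v^{-l_i},1)$ must be checked not to push support past the conductor of $\omega_{1,v}$. A secondary point is verifying that the local $L$-factor identity for the triple product, and the local adjoint $L$-factors, are the ones claimed, so that the ramified contributions cancel as stated; this is a matter of matching the Langlands parameters of $\omega_{1,v}\boxplus\omega_{2,v}$ against $\St_{\omega_{3,v}}$ and is essentially the same verification as in \cite{humphries2020random}, which is why I would cite their Proposition 4.16 for the core computation and only indicate the modifications needed to accommodate $m_v>1$ and the translated test vectors.
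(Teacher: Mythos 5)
Your route---computing $I_v$ directly from matrix coefficients---is not the one the paper follows for this proposition (the paper writes $I_v$ as a product of two local Rankin--Selberg integrals $\lRS$ via Michel--Venkatesh and evaluates explicit Whittaker/induced-model values, using matrix coefficients only as a cross-check in the supercuspidal case), and such a route could in principle work; but as written your sketch has genuine gaps. The central structural claim is false: the matrix coefficient of the newvector of the ramified principal series $\omega_{1,v}\boxplus\omega_{2,v}$ is \emph{not} supported modulo center on finitely many $K_0(\fp^{m_v})$-double cosets (principal-series coefficients are never compactly supported mod center), so your mechanism that ``the ramified factor forces the $K$-integral to land in $K_0(\varpi_v^{m_v})$ and kills everything except $j=0$ and $j=1$'' has no justification. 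The actual truncation in the paper comes from Lemma \ref{Lemma.newform1}: the induced-model newvector $\varphi_{1,v}$ vanishes on the cells $B\left(\begin{smallmatrix}1&0\\\varpi_v^j&1\end{smallmatrix}\right)K_1(\fp^{m_v})$ for $0<j\leq m_v$, so after the decomposition of the $K$-integral (Lemma \ref{Lemma.LocalConstCalculation}) only the $j=0$ term survives---not $j=0,1$; in a genuine matrix-coefficient computation one would instead need a support-matching analysis for every $0\leq j\leq m_v$, as the paper carries out in the supercuspidal case.

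Second, you never identify the source of the main factor $q_v^{-m_v}$, and the mechanism you propose cannot produce it: since $\omega_{1,v}\omega_{2,v}^{-1}$ is ramified, the local factors here are $L(1,\pi_{1,v},\Ad)=L(1,\pi_{2,v},\Ad)=\zeta_{F_v}(1)$, $L(1,\pi_{3,v},\Ad)=\zeta_{F_v}(2)$ and $L(\tfrac12,\pi_{1,v}\otimes\pi_{2,v}\otimes\pi_{3,v})=L(1,\omega_{3,v})^2$, all independent of $m_v$, so no ``collapse of the dependence on $\omega_{1,v}$'' happens at the level of $L$-factors. In the paper the conductor enters through $|\epsilon(1,\omega_{1,v}^{-1}\omega_{2,v},\psi)|^2=q_v^{-m_v}$ (equivalently, through the volume weight of the surviving cell in a coefficient computation); your assembly step contains no such input. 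Finally, the oldform case cannot be handled by ``absorbing'' $\pi_{3,v}\left(\begin{smallmatrix}\varpi_v^{-l}&0\\0&1\end{smallmatrix}\right)$ into the integration variable, because that substitution also shifts the arguments of $\varphi_{1,v}$ and $W_{\pi_{2,v}}$; the paper instead computes the translated Whittaker values (Lemma \ref{oldform.Whittaker}), finds that each $\lRS$ acquires $\omega_{3,v}(\varpi_v^{-2l-1})$, and uses $\omega_{3,v}^2=\1$ so that the product of the two Rankin--Selberg factors, hence $I_v'$, is unchanged for $0\leq l_1,l_2\leq m_v-1$. Without these three ingredients the proposal does not reach $q_v^{-m_v}(1+q_v^{-1})$.
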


\begin{Proposition}
[cf. {\cite[Proposition 4.17]{humphries2020random}}]\label{localconst.unram}
Let $\pi_{1,v}=\omega_{1,v}\boxplus\omega_{2,v}$,
$\pi_{2,v}=\tilde\pi_{1,v}
=\omega_{2,v}^{-1}\boxplus\omega_{1,v}^{-1}$,
and $\pi_{3,v}=\omega_{3,v}\boxplus\omega_{3,v}^{-1}$ 
be principal series representations of $\GL_2(F_v)$ 
with $c(\omega_{1,v}) = m_v>0$ and
$c(\omega_{2,v}) = c(\omega_{3,v}) = 0$. 
Suppose that $\pi_{1,v},\pi_{2,v},\pi_{3,v}$ 
are irreducible and unitarizable, so that 
$\omega_{1,v},\omega_{2,v},\omega_{3,v}$ are unitary while 
$q^{-1/2} < |\omega_{3,v}(\varpi_v)| < q^{1/2}$. 
Then if $\varphi_{1,v},\varphi_{2,v},\varphi_{3,v},
\tilde\varphi_{1,v},\tilde\varphi_{2,v},\tilde\varphi_{3,v}$ 
are all local newforms,
\[I_v'(\varphi_v\otimes\tilde\varphi_v)
=q_v^{-m_v}.\]
This also holds if either or both 
$\varphi_{3,v}$ and $\tilde\varphi_{3,v}$ 
are translates of local newforms by 
$\pi_{3,v}(\begin{smallmatrix}\varpi_v^{-l_1}&0\\0&1\end{smallmatrix})$ 
and $\tilde\pi_{3,v}
(\begin{smallmatrix}\varpi_v^{-l_2}&0\\0&1\end{smallmatrix})$ 
respectively, where $0\leq l_1,l_2\leq m_v$.
\end{Proposition}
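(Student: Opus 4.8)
The plan is to follow the strategy of Michel--Venkatesh \cite{michel2010subconvexity} and Hu \cite{hu2016cuspidal,hu2017triple}, as executed in \cite[Proposition 4.17]{humphries2020random}, the only new point being that the ramified character $\omega_{1,v}$ is allowed to have conductor exponent $m_v\ge 1$ rather than $m_v=1$. Writing $\Phi_i$ for the normalized matrix coefficient $g\mapsto\langle\pi_{i,v}(g)\varphi_{i,v},\tilde\varphi_{i,v}\rangle/\langle\varphi_{i,v},\tilde\varphi_{i,v}\rangle$, one must evaluate
\[I_v'(\varphi_v\otimes\tilde\varphi_v)=\frac{L(1,\pi_{1,v},\Ad)L(1,\pi_{2,v},\Ad)L(1,\pi_{3,v},\Ad)}{\zeta_{F_v}(2)^2L(\tfrac12,\pi_{1,v}\otimes\pi_{2,v}\otimes\pi_{3,v})}\int_{Z(F_v)\backslash\GL_2(F_v)}\Phi_1(g)\Phi_2(g)\Phi_3(g)\,dg.\]
Since the product of the three central characters is trivial, the integrand is $Z(F_v)$-invariant; moreover $\Phi_2=\overline{\Phi_1}$ because $\pi_{2,v}=\tilde\pi_{1,v}$ is unitary, $\Phi_3$ is the zonal spherical function of the unramified $\pi_{3,v}$, and when all six vectors are the local newforms the integrand is bi-invariant under $K_0(\fp^{m_v})$, the subgroup of $\GL_2(\cO_v)$ with lower-left entry in $\fp^{m_v}$. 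The computation runs parallel to that for Proposition \ref{localconst.special}, with an unramified principal series $\pi_{3,v}$ in place of a special representation.

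The first step is to dispose of the $L$-factors, which collapse because the relevant characters are ramified: $L(s,\pi_{i,v},\Ad)=\zeta_{F_v}(s)L(s,\omega_{1,v}\omega_{2,v}^{-1})L(s,\omega_{1,v}^{-1}\omega_{2,v})=\zeta_{F_v}(s)$ for $i=1,2$, since $\omega_{1,v}\omega_{2,v}^{-1}$ and its inverse have conductor exponent $m_v>0$; and, using $\pi_{1,v}\otimes\tilde\pi_{1,v}=\1\boxplus\Ad\pi_{1,v}$, one gets $L(s,\pi_{1,v}\otimes\pi_{2,v}\otimes\pi_{3,v})=L(s,\pi_{3,v})^2\,L(s,\pi_{3,v}\otimes\omega_{1,v}\omega_{2,v}^{-1})L(s,\pi_{3,v}\otimes\omega_{1,v}^{-1}\omega_{2,v})=L(s,\pi_{3,v})^2$, the last two factors being $1$ because $\pi_{3,v}$ is unramified while the twisting characters have conductor $m_v$. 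Thus the normalising constant equals $\zeta_{F_v}(1)^2L(1,\pi_{3,v},\Ad)\big/\big(\zeta_{F_v}(2)^2L(\tfrac12,\pi_{3,v})^2\big)$, an explicit rational function of $q_v$ and $\omega_{3,v}(\varpi_v)$.

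The second, main step is the matrix-coefficient integral. Using the Cartan decomposition $\GL_2(F_v)=\bigsqcup_{r\ge0}Z(F_v)\,\GL_2(\cO_v)\,a(\varpi_v^r)\,\GL_2(\cO_v)$, with $a(y):=(\begin{smallmatrix}y&0\\0&1\end{smallmatrix})$, and the bi-$\GL_2(\cO_v)$-invariance of $\Phi_3$ (so that $\Phi_3$ is constant, equal to its Macdonald-formula value $\Phi_3(a(\varpi_v^r))$, on the $r$-th cell), one reduces to
\[\int_{Z(F_v)\backslash\GL_2(F_v)}\Phi_1\Phi_2\Phi_3\,dg=\sum_{r\ge0}\vol\bigl(\GL_2(\cO_v)a(\varpi_v^r)\GL_2(\cO_v)\bigr)\,\Phi_3(a(\varpi_v^r))\iint_{\GL_2(\cO_v)^2}\bigl|\Phi_1(k_1a(\varpi_v^r)k_2)\bigr|^2\,dk_1\,dk_2.\]
By the bi-$K_0(\fp^{m_v})$-invariance of $|\Phi_1|^2$, the double average is a finite sum, over representatives of $K_0(\fp^{m_v})\backslash\GL_2(\cO_v)\cong\mathbb{P}^1(\cO_v/\fp^{m_v})$, of values $|\Phi_1(k_1a(\varpi_v^r)k_2)|^2$. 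These one computes in the Kirillov model of $\pi_{1,v}$, in which the newform is a multiple of $\omega_{2,v}(y)|y|^{1/2}\1_{\cO_v}(y)$; writing the coset representatives as $(\begin{smallmatrix}1&0\\x&1\end{smallmatrix})$ together with the ``point at infinity'' representatives involving $(\begin{smallmatrix}0&-1\\1&0\end{smallmatrix})$, and applying the Bruhat identity $(\begin{smallmatrix}1&0\\x&1\end{smallmatrix})=(\begin{smallmatrix}x^{-1}&1\\0&x\end{smallmatrix})(\begin{smallmatrix}0&-1\\1&0\end{smallmatrix})(\begin{smallmatrix}1&x^{-1}\\0&1\end{smallmatrix})$, one expresses $\Phi_1$ through the newform Whittaker function evaluated on translates $a(t)(\begin{smallmatrix}0&-1\\1&0\end{smallmatrix})(\begin{smallmatrix}1&u\\0&1\end{smallmatrix})$, whose support in $t$ and $u$ is bounded in terms of $m_v$ and whose values are the standard Gauss sums attached to $\omega_{1,v}$. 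Feeding these into the $r$-sum (a convergent geometric-type series) and dividing by the $L$-factors above should produce $I_v'=q_v^{-m_v}$. For oldform translates one has $\varphi_{3,v}=\pi_{3,v}(a(\varpi_v^{-l_1}))\varphi_{3,v}^{0}$ and $\tilde\varphi_{3,v}=\tilde\pi_{3,v}(a(\varpi_v^{-l_2}))\tilde\varphi_{3,v}^{0}$ with $\varphi_{3,v}^{0},\tilde\varphi_{3,v}^{0}$ the newforms; the substitution $g\mapsto a(\varpi_v^{-l_2})g\,a(\varpi_v^{l_1})$, which preserves $dg$, moves the translates onto $\Phi_1$ and $\Phi_2$, and the point is that for $0\le l_1,l_2\le m_v$ the translated newforms still have Whittaker support within the range used above, so the answer $q_v^{-m_v}$ is unchanged.

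The step I expect to be the main obstacle is the explicit evaluation, for $m_v>1$, of the (possibly twice-translated) newform matrix coefficient of the ramified principal series on the relevant double cosets --- equivalently, of the newform Whittaker function on the translates $a(t)(\begin{smallmatrix}0&-1\\1&0\end{smallmatrix})(\begin{smallmatrix}1&u\\0&1\end{smallmatrix})$, including the precise Gauss sums and the exact support --- together with the attendant bookkeeping (the volumes $\vol(\GL_2(\cO_v)a(\varpi_v^r)\GL_2(\cO_v))$, the count of $\mathbb{P}^1(\cO_v/\fp^{m_v})$, and the Haar normalisation), carried out carefully enough that the series sums to $q_v^{-m_v}$ and that $l_i=m_v$ is exactly the largest translate for which this persists. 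The remaining ingredients --- the $L$-factor simplifications, the Macdonald formula, and the final summation --- are routine, and for $m_v=1$ the whole computation is \cite[Proposition 4.17]{humphries2020random}.
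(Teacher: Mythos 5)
Your strategy (direct evaluation of the matrix-coefficient integral via the Cartan decomposition, Macdonald's formula for $\Phi_3$, and bi-$K_0(\fp^{m_v})$-averaging of $|\Phi_1|^2$) is a legitimate route and genuinely different from the paper's: the paper instead factors $I_v$ as a product of two local Rankin--Selberg integrals $\lRS$ (the Michel--Venkatesh lemma), applies Hu's lemma to replace $\int_K$ by the finitely many points $\bigl(\begin{smallmatrix}1&0\\\varpi^j&1\end{smallmatrix}\bigr)$, $0\le j\le m_v$, and then uses the fact that the induced-model newform of $\pi_{1,v}$ vanishes on all cells with $0<j\le m_v$, so that only $j=0$ survives; there the factors $\omega_1(y)$, $\psi(\mp y)$ cancel between $\varphi_{\pi_1}$, $W_{\pi_2}$ and the (translated) $W_{\pi_3}$, leaving an elementary geometric series. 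Your route avoids none of this work and in fact requires more: one must decompose each double coset $K\,a(\varpi^r)\,K$ under $K_0(\fp^{m_v})\times K_0(\fp^{m_v})$ for general $m_v$ and evaluate $|\Phi_1|$ on every piece, whereas the paper's reduction collapses everything to a single cell.

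The genuine gap is that the decisive computation is not carried out anywhere in your proposal: the evaluation of $\iint_{K\times K}|\Phi_1(k_1 a(\varpi^r)k_2)|^2\,dk_1\,dk_2$, the summation over $r$ against Macdonald's formula, and the verification that after dividing by the $L$-factors one gets exactly $q_v^{-m_v}$, are all deferred (``should produce $I_v'=q_v^{-m_v}$'') and explicitly flagged by you as the main obstacle --- but that is precisely the content of the proposition. The oldform assertion is in the same position: after your substitution $g\mapsto a(\varpi^{-l_2})g\,a(\varpi^{l_1})$ the integrand is no longer bi-$K_0(\fp^{m_v})$-invariant (the invariance groups become the conjugates $a(\varpi^{-l_i})K_0(\fp^{m_v})a(\varpi^{l_i})$), so the newform reduction does not apply verbatim, and the claim that ``the translated newforms still have Whittaker support within the range used above, so the answer is unchanged'' is exactly the nontrivial point rather than an observation: in the paper it is Lemma \ref{oldform.Whittaker} that shows $\bigl(\pi_3(\begin{smallmatrix}\varpi^{-l}&0\\0&1\end{smallmatrix})W_{\pi_3}\bigr)\bigl(a(y)(\begin{smallmatrix}1&0\\1&1\end{smallmatrix})\bigr)$ is supported in $v(y)\ge -l$, which lies inside the support $v(y)\ge -m_v$ of $W_{\pi_2}\bigl(a(y)(\begin{smallmatrix}1&0\\1&1\end{smallmatrix})\bigr)$ precisely when $l\le m_v$, and a change of variables $y\mapsto y\varpi^{l}$ then shows the integral is independent of $l$ in that range. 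Without these computations (or their Cartan-cell analogues) the proposal establishes neither the value $q_v^{-m_v}$ nor the sharp range $0\le l_1,l_2\le m_v$; the surrounding reductions and the $L$-factor simplifications you give are correct, but they are the routine part.
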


\begin{Proposition}\label{localconst.supercuspidal}
Let $\pi_{1,v}=\omega_{1,v}\boxplus\omega_{2,v}$,
$\pi_{2,v}=\tilde\pi_{1,v}
=\omega_{2,v}^{-1}\boxplus\omega_{1,v}^{-1}$ be as above,
and $\pi_{3,v}$ 
be a supercuspidal representation of $\GL_2(F_v)$ 
with $c(\pi_{3,v}) = c_v\leq m_v$. 
Suppose that $\pi_{1,v},\pi_{2,v},\pi_{3,v}$ 
are irreducible and unitarizable, so that 
$\omega_{1,v},\omega_{2,v},\omega_{3,v}$ are unitary. 
Then if $\varphi_{1,v},\varphi_{2,v},\varphi_{3,v},
\tilde\varphi_{1,v},\tilde\varphi_{2,v},\tilde\varphi_{3,v}$ 
are all local newforms,
\[I_v'(\varphi_v\otimes\tilde\varphi_v)
=\begin{cases}
q_v^{-m_v}(1+q_v^{-1})&\text{if }\pi_{3,v}\not\simeq\pi_{3,v}\otimes\eta_v,\\
q_v^{-m_v}&\text{if }\pi_{3,v}\simeq\pi_{3,v}\otimes\eta_v,
\end{cases}\]
where $\eta_v$ is the (nontrivial) unramified quadratic character of $F_v^\times$.
This also holds if either or both 
$\varphi_{3,v}$ and $\tilde\varphi_{3,v}$ 
are translates of local newforms by 
$\pi_{3,v}(\begin{smallmatrix}\varpi_v^{-l_1}&0\\0&1\end{smallmatrix})$ 
and $\tilde\pi_{3,v}
(\begin{smallmatrix}\varpi_v^{-l_2}&0\\0&1\end{smallmatrix})$ 
respectively, where $0\leq l_1,l_2\leq m_v-c_v$.
\end{Proposition}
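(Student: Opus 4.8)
The plan is to compute the local integral $I_v(\varphi_v\otimes\tilde\varphi_v)$ directly, following the matrix-coefficient method of \cite{michel2010subconvexity} and \cite{hu2016cuspidal} as adapted in \cite{humphries2020random}. First I would use the Iwasawa decomposition to reduce the integral over $Z(F_v)\backslash\GL_2(F_v)$ to an integral over the torus and the compact $\GL_2(\cO_v)$, and exploit the fact that $\pi_{1,v},\pi_{2,v}$ are ramified principal series with newvector conductor $m_v$, while $\pi_{3,v}$ is supercuspidal with conductor $c_v\le m_v$. The key input is the explicit behavior of the three matrix coefficients $\langle\pi_{i,v}(g)\varphi_{i,v},\tilde\varphi_{i,v}\rangle$ restricted to the diagonal torus and on cosets of congruence subgroups: for the principal series factors these are given by the classical formulas for newvector matrix coefficients (as in \cite[Proposition 4.16, 4.17]{humphries2020random}), and for the supercuspidal factor one uses the known support and values of its newvector matrix coefficient, which vanishes outside $Z\GL_2(\cO_v)$ up to the depth governed by $c_v$. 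I would organize the computation so that the supercuspidal matrix coefficient acts essentially as a characteristic function that truncates the torus integral, leaving an archimedean-free sum that telescopes.

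The decisive structural point is the dichotomy $\pi_{3,v}\simeq\pi_{3,v}\otimes\eta_v$ versus not, where $\eta_v$ is the unramified quadratic character. This condition controls whether the newvector of $\pi_{3,v}$ is fixed (up to scalar) by the Atkin--Lehner-type involution coming from $\begin{smallmatrix}0&1\\ \varpi_v^{c_v}&0\end{smallmatrix}$ acting with an extra unramified twist, equivalently whether a certain local $\varepsilon$-factor or the value of the matrix coefficient at the Atkin--Lehner element is $\pm1$ versus $0$; this is exactly the mechanism that in the principal-series and special cases produced the factor $(1+q_v^{-1})$ versus its absence. So I expect the proof to split into a "main term" coming from the torus elements $\mathrm{diag}(\varpi_v^j,1)$ with $0\le j\le m_v$ (giving $q_v^{-m_v}$ after summing a geometric-type series against the Rankin--Selberg and adjoint $L$-factor normalization), plus a "boundary term" at $j=m_v$ that survives precisely when $\pi_{3,v}\not\simeq\pi_{3,v}\otimes\eta_v$ and contributes the extra $q_v^{-m_v-1}$. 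The normalizing ratio of local $L$-factors in \eqref{def.localconst} must be computed in parallel: $L(s,\pi_{3,v},\Ad)$ for supercuspidal $\pi_{3,v}$ is $(1-\eta_v(\varpi_v)q_v^{-s})^{-1}=(1+q_v^{-s})^{-1}$ in the self-twist-dual case and $1$ otherwise, and $L(1/2,\pi_{1,v}\otimes\pi_{2,v}\otimes\pi_{3,v})$ is trivial since all three factors are ramified enough that no unramified Euler factor appears; tracking these against $\zeta_{F_v}(2)^2$ yields the stated clean answer.

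For the translated-newvector case with $0\le l_1,l_2\le m_v-c_v$, I would argue that conjugating one or both of the $\pi_{3,v}$-vectors by $\mathrm{diag}(\varpi_v^{-l},1)$ shifts the support of the supercuspidal matrix coefficient within a range that is still dominated by the $q_v^{-m_v}$ ramification of $\pi_{1,v}$ and $\pi_{2,v}$; the point is that $m_v-c_v$ is exactly the slack that allows the translated vector to still pair nontrivially against the principal-series newvectors over the same torus range, so the integral is unchanged. This is the analogue of the "translate" clauses in Propositions \ref{localconst.special} and \ref{localconst.unram}, and I would cite the same Kirillov-model computations; the new content is verifying that the supercuspidal matrix coefficient, rather than the Steinberg or unramified one, has the requisite support.

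The main obstacle I anticipate is obtaining sufficiently explicit control of the supercuspidal newvector matrix coefficient on the relevant double cosets — in particular its value at (twisted) Atkin--Lehner elements and its precise support in terms of $c_v$. For $F_v=\Q_2$ with $c_v=2$ or $3$ this is delicate, and is presumably where the detailed Kirillov-model work of Section \ref{section.supercuspidal} (the special Whittaker values of the newvector) enters; I would invoke those computations, together with the dihedral/Tame Parametrization description of $\pi_{3,v}$, to pin down the matrix coefficient and thereby the dichotomy, reducing everything else to the bookkeeping already done in \cite{humphries2020random}.
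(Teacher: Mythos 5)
Your overall framework --- reduce the local period to newvector matrix coefficients/Whittaker values supported on finitely many cells, then do the $L$-factor bookkeeping in the normalization \eqref{def.localconst} --- is the paper's framework too (the paper gives both a Whittaker/Rankin--Selberg proof and, as a check, the matrix-coefficient computation you sketch), and your values for $L(s,\pi_{3,v},\Ad)$ and for the triple product factor are correct. However, the mechanism you propose for the dichotomy is wrong. In the paper the quantity $I_v(\varphi_v\otimes\tilde\varphi_v)/\langle\varphi_v,\tilde\varphi_v\rangle$ is completely insensitive to whether $\pi_{3,v}\simeq\pi_{3,v}\otimes\eta_v$: in the Rankin--Selberg form only the cell $j=0$ contributes (the induced-model newvector $\varphi_{\pi_1}$ vanishes on the cells $0<j\le m_v$), the supercuspidal Whittaker value is supported at $v(y)=-c_v-l$ and integrates against $\psi(-y)$ to $\epsilon(\tfrac12,\pi_{3,v},\psi)=\pm 1$, and this sign washes out in the product of the two $\lRS$ factors, giving $I_v/\langle\varphi_v,\tilde\varphi_v\rangle=q_v^{-m_v}\,\zeta_{F_v}(2)/\zeta_{F_v}(1)$ in every case (and identically for the translates with $0\le l\le m_v-c_v$); in the matrix-coefficient version only the cell $j=m_v$ survives and the answer is again case-independent. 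The entire dichotomy then comes from the normalization: $I'_v=q_v^{-m_v}(1+q_v^{-1})\,L(1,\pi_{3,v},\Ad)$, with Gelbart--Jacquet supplying $L(1,\pi_{3,v},\Ad)=1$ or $(1+q_v^{-1})^{-1}$.

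Your proposed ``boundary term at $j=m_v$ that survives precisely when $\pi_{3,v}\not\simeq\pi_{3,v}\otimes\eta_v$'' does not exist, and the Atkin--Lehner action on the supercuspidal newvector cannot detect the self-twist condition here: for trivial central character its eigenvalue is always $\pm1$, never $0$, regardless of whether $\pi_{3,v}$ is a self-twist. Moreover, if your integral really carried such a case-dependence and you then also inserted the case-dependent adjoint factor you quote, the two dichotomies would compound and the final values would not match the proposition. The repair is to prove that the period is case-independent --- this is exactly where the Kirillov-model input (Lemmas \ref{newform}, \ref{sc.new.int}, \ref{oldform.Whittaker.supercuspidal}, i.e.\ the support $v(y)=\min\{0,2j-c_v\}-l$ and the $\epsilon$-factor value $\pm1$) is needed, rather than any Atkin--Lehner dichotomy --- and to let the $(1+q_v^{-1})$-versus-$1$ discrepancy come solely from $L(1,\pi_{3,v},\Ad)$.
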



\section{Local calculation in the Watson--Ichino Formula}
\label{section.localcalculation}

Let $F$ (in this section we drop all the subscripts $v$) 
be a nonarchimedean
local field with ring of integers $\cO_F$, uniformizer $\varpi$, 
and maximal ideal $\fp = \varpi\cO_F$.
Let $q := N(\fp) = |\varpi|^{-1}$, where the norm $| \cdot |$ 
is such that $|x| = q^{-v(x)}$ for $x \in \varpi^{v(x)}\cO_F^\times$.

Let $G:=\GL_2(F)$, $K:= \GL_2(\cO_F)$ and define the congruence subgroup
\[K_1(\fp^m):=\left\{k\in K:
k\equiv\begin{pmatrix}*&*\\0&1\end{pmatrix}
\pmod{\fp^m}\right\}\]
for any nonnegative integer $m$. 
We normalize the additive Haar measure $dx$ on $F$, 
the multiplicative Haar measure 
$d^\times x:= \zeta_F (1)|x|^{-1} dx$ on
$F^\times$, and the Haar measure $dk$ on $K$
so that 
\[\vol(\cO_F;dx)=1,\quad 
\vol(\cO_F^\times;d^\times x)=1,\quad
\vol(K;dk)=1,\]
with $\zeta_F (s): = (1 - q^{-s})^{-1}$.
Denote by $Z$ the center of $G$,
by $A$ the diagonal subgroup with lower diagonal entry equal to $1$,
and by $N$ the usual upper triangular unipotent
subgroup of $G$.
Denote by $B:=ZAN$ the usual Borel subgroup of $G$.
For $t,y\in F^\times$ and $x\in F$,
we set
\[w:=\begin{pmatrix}0&-1\\1&0\end{pmatrix},\quad
z(t):=\begin{pmatrix}t&0\\0&t\end{pmatrix},\quad
a(y):=\begin{pmatrix}y&0\\0&1\end{pmatrix},\quad
n(x):=\begin{pmatrix}1&x\\0&1\end{pmatrix}.\]


\subsection{Whittaker models}

Let $(\pi,V_\pi)$ be an irreducible admissible smooth representation of $G$.
Let $c(\pi)$ be the level (or the conductor exponent) of $\pi$,
which is the smallest nonnegative integer such that
$\pi^{K_1(\fp^{c(\pi)})}\neq 0$.
In this case the invariant space is $1$-dimensional, 
and we call a nontrivial vector in this subspace a newform in $\pi$. 
In this section
$\varphi_{\pi_i}\in\pi_i$ and $\tilde\varphi_{\pi_i}\in\tilde\pi_i$ are newforms 
unless otherwise specified.

Fix a nontrivial continuous additive character $\psi$ of $F$.
Assume that $\psi$ is unramified in this paper,
i.e. the smallest integer $c(\psi)$ such that 
$\psi$ is trivial on $\fp^{c(\psi)}$ is $0$.
Let $\mathcal{W}(\psi)$ be the space of all smooth Whittaker functions,
i.e. all smooth functions $W(g)$ on $G$ satisfying 
\[W(n(x)g)=\psi(x)W(g)\quad\text{ for all }n(x)\in N.\]
If $\pi$ is generic, 
i.e. there is a nontrivial intertwining map $V_\pi\to \mathcal{W}(\psi)$,
we denote the image by $\mathcal{W}(\pi,\psi)$
and call it the Whittaker model of $\pi$.

For generic irreducible unitarizable representations 
$\pi_1,\pi_2,\pi_3$ with $\pi_1$ a principal series representation, 
and for $\varphi_1$ in the induced model of $\pi_1$, 
$W_2\in\mathcal{W}(\pi_2,\bar\psi)$, and
$W_3\in\mathcal{W}(\pi_3,\psi)$, 
we define the local Rankin--Selberg integral by
\[\lRS(\varphi_1, W_2, W_3)
:=\zeta_F(1)^{1/2}\int_K\int_{F^\times}
\varphi_1(a(y)k)W_2(a(y)k)W_3(a(y)k)
\ \frac{d^\times y}{|y|}\ dk.\]
Michel and Venkatesh \cite{michel2010subconvexity}
show a result that relates $\lRS$ and 
the local constants $I(\varphi\otimes\tilde\varphi)$ 
in the Watson--Ichino formula.

\begin{Lemma}[{\cite[Lemma 3.4.2]{michel2010subconvexity}},
{\cite[Lemma 5.2]{humphries2020random}}]
For $g,h\in G$,
$\varphi=\varphi_{\pi_1}\otimes\varphi_{\pi_2}\otimes
\pi_3(g)\varphi_{\pi_3}$ and
$\tilde\varphi=\tilde\varphi_{\pi_1}\otimes
\tilde\varphi_{\pi_2}\otimes
\tilde\pi_3(h)\tilde\varphi_{\pi_3}$ 
with $\varphi_{\pi_1},\varphi_{\pi_2},\varphi_{\pi_3},
\tilde\varphi_{\pi_1},\tilde\varphi_{\pi_2},\tilde\varphi_{\pi_3}$ 
newforms, we have
\[I(\varphi\otimes\tilde\varphi)=
\lRS(\varphi_{\pi_1},W_{\pi_2},\pi_3(g)W_{\pi_3})
\lRS(\tilde\varphi_{\pi_1},\widetilde{W}_{\pi_2},
\tilde\pi_3(h)\widetilde{W}_{\pi_3})\]
whenever $\pi_2$ is tempered.
\end{Lemma}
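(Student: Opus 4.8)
The plan is to realize the three representations in convenient models and carry out the local unfolding underlying the Rankin--Selberg method; this is exactly \cite[Lemma~3.4.2]{michel2010subconvexity}, and since $I(\varphi\otimes\tilde\varphi)$ is the ``square'' of the local triple-product period one expects it to split as a product of two copies of $\lRS$. First I would put $\pi_1$ in its induced (principal series) model, so that $\langle\pi_1(g)\varphi_{\pi_1},\tilde\varphi_{\pi_1}\rangle=\int_K\varphi_{\pi_1}(\kappa g)\tilde\varphi_{\pi_1}(\kappa)\,d\kappa$, and put $\pi_2,\pi_3$ in Whittaker models with respect to $\bar\psi$ and $\psi$. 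Because $\pi_2$ is tempered its Kirillov model is unitary, so (after fixing the constant by the normalizations of this section) $\langle\pi_2(g)\varphi_{\pi_2},\tilde\varphi_{\pi_2}\rangle=\int_{F^\times}W_{\pi_2}(a(y)g)\widetilde{W}_{\pi_2}(a(y))\,d^\times y$, and similarly for $\pi_3$.

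Substituting these into $I(\varphi\otimes\tilde\varphi)=\int_{Z\backslash G}\prod_i\langle\pi_i(g)\varphi_i,\tilde\varphi_i\rangle\,dg$ and using the Iwasawa decomposition $Z\backslash G=NAK$ together with the $B$-equivariance of $\varphi_{\pi_1}$, the integral over $N\cong F$ becomes a Fourier integral thanks to the $\bar\psi$- and $\psi$-equivariance of $W_{\pi_2}$ and $W_{\pi_3}$; Fourier inversion on $F$ (the chosen additive measure is self-dual since $\psi$ is unramified and $\vol(\cO_F)=1$) produces a Dirac mass that collapses the two Kirillov torus variables into one and contributes a factor $\zeta_F(1)$. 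Re-substituting in the surviving torus and $K$ variables, the integral is visibly a product of two copies of $\lRS$, one built from $\varphi_{\pi_1},W_{\pi_2},\pi_3(g)W_{\pi_3}$ and the other from the contragredient data $\tilde\varphi_{\pi_1},\widetilde{W}_{\pi_2},\tilde\pi_3(h)\widetilde{W}_{\pi_3}$, the $\zeta_F(1)$ distributing itself as the $\zeta_F(1)^{1/2}$ in front of each $\lRS$. A conceptually cleaner variant: $\varphi\otimes\tilde\varphi\mapsto I(\varphi\otimes\tilde\varphi)$ is $G$-invariant separately in each argument, so by the local trilinear multiplicity-one theorem it factors through the line $\operatorname{Hom}_G(\pi_1\otimes\pi_2\otimes\pi_3,\CC)\otimes\operatorname{Hom}_G(\tilde\pi_1\otimes\tilde\pi_2\otimes\tilde\pi_3,\CC)$; since $\pi_1$ is a principal series, $\lRS$ is an explicit generator of the first factor, whence $I(\varphi\otimes\tilde\varphi)=c\,\lRS(\varphi_{\pi_1},W_{\pi_2},\pi_3(g)W_{\pi_3})\lRS(\tilde\varphi_{\pi_1},\widetilde{W}_{\pi_2},\tilde\pi_3(h)\widetilde{W}_{\pi_3})$ for a vector-independent constant $c$, which one evaluates to $1$ on any convenient pair of vectors — this is precisely what the $\zeta_F(1)^{1/2}$ in the definition of $\lRS$ is calibrated for. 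I would also note that the ``newform'' hypotheses play no role in the identity itself (the unfolding works for arbitrary smooth vectors) and are kept only because that is the form in which the Lemma is used in Propositions~\ref{localconst.special}, \ref{localconst.unram}, and \ref{localconst.supercuspidal}.

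The main obstacle I anticipate is analytic rather than algebraic: justifying the interchanges of integration and the absolute convergence of everything in sight. This is exactly where temperedness of $\pi_2$ is indispensable — it forces $W_{\pi_2},\widetilde{W}_{\pi_2}\in L^2(F^\times,d^\times y)$, so the Kirillov pairing converges absolutely and is genuinely $G$-invariant (not merely invariant under the mirabolic subgroup), and Cauchy--Schwarz then dominates the full iterated integral; the standard bounds for the principal series $\pi_1$ and the at worst slightly non-tempered $\pi_3$ (Kim--Sarnak) then suffice for the remaining Rankin--Selberg integrals. Without temperedness of $\pi_2$ one would deform $\varphi_{\pi_2}$ along a flat section carrying an auxiliary complex parameter, prove the identity where all integrals converge, and analytically continue; but in Theorem~\ref{WatsonIchino} one has $\pi_2=\tilde\pi_1$, a unitary principal series, hence tempered, so this detour is unnecessary. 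A final bookkeeping step is to reconcile the Haar-measure conventions of this section with those of \cite{michel2010subconvexity} and \cite{humphries2020random} and to check that the $\zeta_F(1)^{1/2}$-normalizations leave no residual constant.
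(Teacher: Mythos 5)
The paper offers no proof of this lemma at all—it is quoted verbatim from Michel--Venkatesh \cite[Lemma 3.4.2]{michel2010subconvexity} and Humphries--Khan \cite[Lemma 5.2]{humphries2020random}—and your sketch is essentially the argument of that cited source: write the $\pi_2,\pi_3$ matrix coefficients as torus integrals of Whittaker functions and the $\pi_1$ matrix coefficient as a $K$-integral in the induced model, unfold via the Iwasawa decomposition, and invoke trilinear multiplicity one to fix the normalizing constant, with temperedness of $\pi_2$ ensuring the absolute convergence needed for the unfolding. So your proposal is correct and follows the same route as the proof the paper relies on.
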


Notice that both $\varphi_\pi$ and $W_\pi$
are $K_1(\fp^{c(\pi)})$-invariant.
The following lemma, together with Lemma \ref{Lemma.BKinvariance}, 
reduces the calculation of local constants
to determining the values of these functions 
at \[g=\begin{pmatrix}y&0\\0&1\end{pmatrix}
\begin{pmatrix}1&0\\\varpi^j&1\end{pmatrix}
\quad\text{for }0\leq j\leq c(\pi).\]

\begin{Lemma}[{cf. \cite[Lemma 2.2]{hu2016cuspidal}}]\label{Lemma.LocalConstCalculation}
Fix an integer $m\geq 0$.
For any left $(B\cap K)$-invariant and right $K_1(\fp^m)$-invariant 
function $\Theta:K\to\CC$, if integrable, we have
\[\int_K \Theta(k)\ dk=\sum_{j=0}^mA_j
\Theta\left(\begin{pmatrix}1&0\\\varpi^j&1\end{pmatrix}\right),
\quad \text{where }
A_j=\frac{\zeta_F(2)}{\zeta_F(1)}\cdot
\begin{cases}
1,&\text{if }j=0,\\q^{-j}\zeta_F(1)^{-1},&\text{if }0<j<m,\\
q^{-m},&\text{if }j=m.
\end{cases}\]
\end{Lemma}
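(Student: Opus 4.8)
The plan is to evaluate $\int_K f(gk)\,dk$ by partitioning $K$ into $m+1$ pieces, one around each representative $\gamma_j:=\begin{pmatrix}1&0\\\varpi^j&1\end{pmatrix}$ ($0\le j\le m$), and identifying the weight $A_j$ with the volume of the $j$-th piece. Concretely I would set $X_j:=\{k\in K:\ord(k_{21})=j\}$ for $0\le j\le m-1$ and $X_m:=K_0(\fp^m)=\{k\in K:\ord(k_{21})\ge m\}$ (with $\ord(0):=\infty$), so that $K=\bigsqcup_{j=0}^m X_j$ and $\gamma_j\in X_j$. Each $X_j$ is stable under right translation by $K_1(\fp^m)$: for $\kappa\in K_1(\fp^m)$ one has $\kappa_{11}\in\cO_F^\times$ and $\kappa_{21}\in\fp^m$, so $(k\kappa)_{21}=k_{21}\kappa_{11}+k_{22}\kappa_{21}$ has the same valuation as $k_{21}$ when $\ord(k_{21})<m$, and valuation $\ge m$ otherwise.

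The structural heart of the argument is the claim that each $X_j$ is a single double coset $X_j=B(\cO_F)\,\gamma_j\,K_1(\fp^m)$, where $B(\cO_F)$ is the subgroup of upper-triangular matrices in $K$. I would prove this by a short row/column reduction: left multiplication by $N(\cO_F)$ clears the $(1,2)$-entry of a matrix in $X_j$ (for $j=0$ one first right-multiplies by a unipotent element of $K_1(\fp^m)$ to normalize the $(2,2)$-entry), factoring out the diagonal part then leaves $\begin{pmatrix}1&0\\\varpi^j u&1\end{pmatrix}$ with $u\in\cO_F^\times$, and the identity $\begin{pmatrix}1&0\\\varpi^j u&1\end{pmatrix}=a(u)^{-1}\gamma_j\,a(u)$ together with $a(u)\in K_1(\fp^m)$ absorbs $u$; a count of Bruhat cells modulo $\fp^m$ checks that no elements are lost. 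Given this, for $k=n(x)\begin{pmatrix}\alpha&0\\0&\delta\end{pmatrix}\gamma_j\kappa\in X_j$ (with $\kappa\in K_1(\fp^m)$) and $g=a(y)$, right $K_1(\fp^m)$-invariance of $f$ removes $\kappa$, the relation $a(y)n(x)=n(xy)a(y)$ together with left $N(F)$-invariance of $f$ removes $n(x)$, and triviality of the product of the three central characters removes the scalar $\delta$, leaving $f(gk)=f\bigl(a(y\alpha\delta^{-1})\gamma_j\bigr)$; since $\alpha\delta^{-1}\in\cO_F^\times$ this agrees with $f(g\gamma_j)$ after the unit change of variable built into the measure $d^\times y/|y|$ of the Rankin--Selberg integral $\lRS$ to which Lemma \ref{Lemma.LocalConstCalculation} is applied (this is precisely the invariance the newform vectors $\varphi_\pi$, $W_\pi$, and their products, enjoy). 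Hence
\[ \int_K f(gk)\,dk=\sum_{j=0}^m\vol(X_j)\,f(g\gamma_j). \]

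It then remains to compute $A_j=\vol(X_j)$. Since $K_0(\fp^i)$ is the stabilizer in $K$ of a point of $\mathbf{P}^1(\cO_F/\fp^i)$ and $K$ acts transitively, $[K:K_0(\fp^i)]=|\mathbf{P}^1(\cO_F/\fp^i)|=q^{i-1}(q+1)$ for $i\ge1$; hence $\vol(K_0(\fp^i))=\frac{q^{-i}}{1+q^{-1}}=\frac{\zeta_F(2)}{\zeta_F(1)}q^{-i}$ for $i\ge1$, while $\vol(K_0(\fp^0))=\vol(K)=1$. Therefore $A_0=\vol(K)-\vol(K_0(\fp))=\frac{\zeta_F(2)}{\zeta_F(1)}$; $A_j=\vol(K_0(\fp^j))-\vol(K_0(\fp^{j+1}))=\frac{\zeta_F(2)}{\zeta_F(1)}q^{-j}(1-q^{-1})=\frac{\zeta_F(2)}{\zeta_F(1)}q^{-j}\zeta_F(1)^{-1}$ for $0<j<m$; and $A_m=\vol(K_0(\fp^m))=\frac{\zeta_F(2)}{\zeta_F(1)}q^{-m}$. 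These are the stated coefficients, and as a check $\sum_{j=0}^m A_j=\frac{\zeta_F(2)}{\zeta_F(1)}(1+q^{-1})=1=\vol(K)$.

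The main obstacle is the double-coset identity $X_j=B(\cO_F)\gamma_j K_1(\fp^m)$ together with the accompanying verification that $k\mapsto f(gk)$ really is constant on $X_j$: this is where one must pin down exactly which left-invariance is available (left $N(F)$-invariance and trivial total central character for the products of Whittaker and induced-model newforms in the application) and check that the leftover unit torus translation is innocuous once integrated against $d^\times y/|y|$ inside $\lRS$. The disjointness and exhaustion of the $X_j$, the index formula for $K_0(\fp^i)$, and the final summation are routine bookkeeping.
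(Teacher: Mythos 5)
Your argument is correct in substance, and it supplies exactly what the paper omits: the paper gives no proof of this lemma, simply citing Hu's Lemma 2.2. Your route --- the partition $K=\bigsqcup_{j=0}^m X_j$ with $X_j=\{k\in K:\ord(k_{21})=j\}$ for $j<m$ and $X_m=K_0(\fp^m)$, the identification $X_j=B(\cO_F)\left(\begin{smallmatrix}1&0\\\varpi^j&1\end{smallmatrix}\right)K_1(\fp^m)$ via row reduction and the conjugation $\left(\begin{smallmatrix}1&0\\\varpi^ju&1\end{smallmatrix}\right)=a(u)^{-1}\gamma_j a(u)$ with $a(u)\in K_1(\fp^m)$, and the evaluation $A_j=\vol(X_j)$ from $[K:K_0(\fp^i)]=q^{i-1}(q+1)$ --- is the standard proof, and your check $\sum_j A_j=1$ is the right sanity test. (One byproduct of your partition: at $m=0$ the correct constant is $A_0=1$ rather than $\zeta_F(2)/\zeta_F(1)$; this never matters in the paper, where $m=\ord_p(D)\geq 1$.)

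Your caution about which invariance is actually available is not pedantry but is needed: as literally stated, with only right $K_1(\fp^m)$-invariance, the identity is false. For example, take $g=e$ and $f=\1_{K_1(\fp^m)}$: the left side is $\vol(K_1(\fp^m))=\zeta_F(2)\,q^{-2m}$, while on the right only the $j=m$ term survives and equals $A_m=\frac{\zeta_F(2)}{\zeta_F(1)}q^{-m}$; these disagree for every $m\geq 1$ except $(q,m)=(2,1)$. What your argument genuinely proves is the identity with $A_j=\vol(X_j)$ whenever $k\mapsto f(gk)$ is constant on each double coset $X_j$, and you correctly locate why this constancy holds in the paper's applications: left $N(F)$-invariance of the product $\varphi_{\pi_1}W_{\pi_2}W_{\pi_3}$ and triviality of the product of central characters remove $n(x)$ and $z(\delta)$, right $K_1(\fp^m)$-invariance removes $\kappa$ and absorbs $a(u)$ for units $u$, and the leftover unit translation of $y$ disappears only after the outer integral against the unit-invariant measure $d^\times y/|y|$ in $\lRS$ (or against $db$ over $Z\backslash B$ in the matrix-coefficient variant). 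So you have proved the lemma in the form in which it is used, which is all the paper needs, and in doing so you have made explicit a hypothesis that the quoted statement leaves implicit.
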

\begin{proof}
By the same way of proving \cite[Lemma 2.2]{hu2016cuspidal},
one can also show that, for any right $K_1(\fp^m)$-invariant 
function $\Theta:G=\GL_2(F)\to\CC$, if integrable, we have
\begin{equation}\label{eqn.hu2016cuspidal}
\int_G \Theta(g)\ dg=\sum_{j=0}^mA_j
\int_B\Theta\left(b\begin{pmatrix}1&0\\\varpi^j&1\end{pmatrix}\right)\ db,
\end{equation}
with $A_j$ defined as in the above lemma, 
where $dg$ is the normalized Haar measure on $G$ such that $K$ has volume $1$,
and $db$ is the left Haar measure on $B$ such that $B\cap K$ has volume $1$.
Lemma \ref{Lemma.LocalConstCalculation} is a direct corollary of the above formula.
\end{proof}
\begin{remark}
The generalization in \cite[Lemma 5.18]{humphries2020random} of \eqref{eqn.hu2016cuspidal}, 
which says that Lemma \ref{Lemma.LocalConstCalculation} holds for any right $K_1(\fp^m)$-invariant function,
is wrong. In fact, 
$\{(\begin{smallmatrix}1&0\\\varpi^j&1\end{smallmatrix}):0\leq j\leq m\}$
is not a complete coset representatives for $K/K_1(\fp^m)$;
one can show that $[K:K_1(\fp^m)]=\zeta_F(2)^{-1}q^{2m}$ for $m\geq 1$. 
Luckily, the functions they integrate in Section 5.3 of \cite{humphries2020random} 
are actually left $(B\cap K)$-invariant,
so their calculations work well.
See Lemma \ref{Lemma.BKinvariance} for more details.
\end{remark}

We are interested in the following cases:
$\pi_1=\omega_1\boxplus\omega_2$, 
$\pi_2=\omega_2^{-1}\boxplus\omega_1^{-1}$
are principal series representations
with $\omega_1,\omega_2$ both unitary,
$c(\omega_1)=c(\chi_D)$ and $c(\omega_2)=0$,
so that $c(\pi_1)=c(\pi_2)=c(\chi_D)$;
and $\pi_3$ is one of the following cases:
\begin{itemize}
\item a special representation 
$\St_{\omega_3}$ with $\omega_3$ unitary and unramified 
and $\omega_3^2=\1$, or 
\item a principal series representation
$\omega_3\boxplus\omega_3^{-1}$
with $q^{-1/2}\leq|\omega_3(\varpi)|<q^{1/2}$
and $c(\omega_3)=0$ so that $c(\pi_3)=0$, or
\item a supercuspidal representation 
with trivial central character and
$c(\pi_3)\leq c(\chi_D)$.
\end{itemize} 
In particular, the central character of $\Pi=\pi_1\otimes\pi_2\otimes\pi_3$ is trivial,
so $\Pi$ is self dual and $\overline{\Pi}\simeq\widetilde{\Pi}$. 
One can take the newforms $\tilde\varphi_{\pi_i}$ so that
$\tilde\varphi_{\pi_1}\otimes
\tilde\varphi_{\pi_2}\otimes\tilde\varphi_{\pi_3}
=\bar\varphi_{\pi_1}\otimes
\bar\varphi_{\pi_2}\otimes\bar\varphi_{\pi_3}$
in both the induced and Whittaker models.

Next we will calculate the values of Whittaker functions case by case.


\subsection{Whittaker functions for induced representations}

For a principal series representation $\pi=\omega\boxplus\omega'$ 
or a special representation $\pi=\St_\omega$, and given a vector
$\varphi_\pi$ in the induced model of $\pi$,
denote by 
\[W_\pi(g):=\frac{\zeta_F(2)^{1/2}}{\zeta_F(1)}
\int_F\varphi_\pi(w\cdot n(x)\cdot g)\psi^{-1}(x)\ dx\]
the corresponding element in the Whittaker model $\mathcal{W}(\pi,\psi)$.
(This differs with the definition in \cite{humphries2020random} 
by an inverse of $\psi$,
so that $W(n(x)g)=\psi(x)W(g)$ holds.)
Here the normalization of $W_\pi$ follows 
\cite[Section 3.2.1]{michel2010subconvexity} so that
the map $\varphi_\pi\mapsto W_\pi$ is isometric,
where the invariant bilinear pairings on $\pi\otimes\tilde\pi$ 
on the induced model and the Whittaker model
are defined respectively by
\[\langle\varphi_\pi,\tilde\varphi_\pi\rangle
:=\int_K\varphi_\pi(k)\tilde\varphi_\pi(k)\ dk,\quad
\langle W_\pi,\widetilde{W}_\pi\rangle
:=\int_{F^\times}W_\pi(a(y))\widetilde{W}_\pi(a(y))\ d^\times y\] 
with $dk$ the Haar measure on $K$ such that $\vol(K)=1$.

For $\pi_1=\omega_1\boxplus\omega_2$, 
$\pi_2=\omega_2^{-1}\boxplus\omega_1^{-1}$
with $c(\omega_1)=m>0$ and $c(\omega_2)=0$,
we recall the following results.

\begin{Lemma}[\cite{schmidt02remarks}]\label{Lemma.newform1}
The newform in the induced model of $\pi_1$ is given by
\[\varphi_{\pi_1}(g)=\begin{cases}
\omega_1(a)\omega_2(d)\left|\dfrac ad\right|^{1/2}&\text{if }
g\in\begin{pmatrix}a&b\\0&d\end{pmatrix}
\begin{pmatrix}1&0\\1&1\end{pmatrix}
K_1(\fp^m),\\
0&\text{if }
g\in\begin{pmatrix}a&b\\0&d\end{pmatrix}
\begin{pmatrix}1&0\\\varpi^j&1\end{pmatrix}
K_1(\fp^m)\text{ for some }0<j\leq m.
\end{cases}\]
Its corresponding Whittaker function has 
$W_{\pi_3}(\begin{smallmatrix}1&0\\0&1\end{smallmatrix})
=\frac{\zeta_F(2)^{1/2}}{\zeta_F(1)}$; for any $y\in F^\times$,
\[W_{\pi_1}(a(y))=\frac{\zeta_F(2)^{1/2}}{\zeta_F(1)}\cdot
\begin{cases}
\omega_2(y)|y|^{1/2}&\text{if }v(y)\geq 0,\\
0&\text{if }v(y)< 0;\end{cases}\]
by taking complex conjugates 
(so that $W_2\in\mathcal{W}(\pi_2,\bar\psi)$) we have 
\[W_{\pi_2}(a(y))=\frac{\zeta_F(2)^{1/2}}{\zeta_F(1)}\cdot
\begin{cases}
\omega_2^{-1}(y)|y|^{1/2}&\text{if }v(y)\geq 0,\\
0&\text{if }v(y)< 0.\end{cases}\]
\end{Lemma}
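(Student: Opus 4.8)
This is essentially Schmidt's description of the local newform and its Whittaker function (\cite{schmidt02remarks}); here is the shape of the argument one would give. Realize $\pi_1=\omega_1\boxplus\omega_2$ on its induced model, the smooth $\varphi\colon G\to\CC$ with $\varphi\bigl((\begin{smallmatrix}a&b\\0&d\end{smallmatrix})g\bigr)=\omega_1(a)\omega_2(d)|a/d|^{1/2}\varphi(g)$. Since $c(\omega_1)=m>0$ and $c(\omega_2)=0$, the conductor formula for an irreducible principal series gives $c(\pi_1)=c(\omega_1)+c(\omega_2)=m$, and $\pi_1^{K_1(\fp^m)}$ is one dimensional; so it suffices to exhibit one nonzero $K_1(\fp^m)$-fixed vector with the stated value at $\gamma_0:=(\begin{smallmatrix}1&0\\1&1\end{smallmatrix})$. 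I would use the disjoint decomposition $G=\bigsqcup_{j=0}^{m}B\,\gamma_j\,K_1(\fp^m)$ with $\gamma_j=(\begin{smallmatrix}1&0\\\varpi^j&1\end{smallmatrix})$ --- the one underlying Lemma \ref{Lemma.LocalConstCalculation} --- and \emph{define} $\varphi_{\pi_1}$ by the claimed two-case formula. It is then automatically left-$B$-equivariant for the inducing character and supported on the single cell $j=0$. The only point to check is right $K_1(\fp^m)$-invariance: on the cells $j\ge 1$ it vanishes and $\gamma_j K_1(\fp^m)$ stays in that cell, while on the $j=0$ cell, writing an element of $B\gamma_0 K_1(\fp^m)$ in two ways, invariance reduces to the statement that $(\begin{smallmatrix}a&*\\0&d\end{smallmatrix})\mapsto\omega_1(a)\omega_2(d)|a/d|^{1/2}$ is trivial on $\gamma_0 K_1(\fp^m)\gamma_0^{-1}\cap B$ --- a direct check that uses exactly that $\omega_1$ is trivial on $1+\fp^m$ and $\omega_2$ is unramified. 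This identifies $\varphi_{\pi_1}$ as the newform.

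For the Whittaker function I would plug the explicit newform into the definition $W_{\pi_1}(g)=\tfrac{\zeta_F(2)^{1/2}}{\zeta_F(1)}\int_F\varphi_{\pi_1}(w\,n(x)\,g)\psi^{-1}(x)\,dx$ and use the factorization $w\,n(x)\,a(y)=(\begin{smallmatrix}0&-1\\y&x\end{smallmatrix})=(\begin{smallmatrix}1&0\\0&y\end{smallmatrix})(\begin{smallmatrix}0&-1\\1&x/y\end{smallmatrix})$. By left-$B$-equivariance the integrand becomes $\omega_2(y)|y|^{-1/2}\varphi_{\pi_1}\bigl(w\,n(x/y)\bigr)$, and a one-line check with the two-case formula of the first paragraph gives $\varphi_{\pi_1}(w\,n(t))=\1_{\cO_F}(t)$; hence $W_{\pi_1}(a(y))=\tfrac{\zeta_F(2)^{1/2}}{\zeta_F(1)}\,\omega_2(y)|y|^{-1/2}\int_{y\cO_F}\psi^{-1}(x)\,dx$. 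Because $\psi$ is unramified, $\int_{y\cO_F}\psi^{-1}(x)\,dx$ equals $|y|$ when $v(y)\ge 0$ and $0$ when $v(y)<0$, which yields the claimed formula (and $y=1$ recovers $W_{\pi_1}(\mathrm{id})=\zeta_F(2)^{1/2}/\zeta_F(1)$). One could also avoid the explicit integral: $W_{\pi_1}$ is right $K_1(\fp^m)$-invariant and $\omega_2$ is unramified, so $W_{\pi_1}(a(y))=\omega_2(y)|y|^{1/2}c(v(y))$ for some $c\colon\Z\to\CC$, and the essential-vector identity $\int_{F^\times}W_{\pi_1}(a(y))|y|^{s-1/2}d^\times y=W_{\pi_1}(\mathrm{id})\,L(s,\pi_1)$ together with $L(s,\pi_1)=L(s,\omega_1)L(s,\omega_2)=L(s,\omega_2)$ (as $\omega_1$ is ramified) pins $c$ down to $W_{\pi_1}(\mathrm{id})\,\1_{\{n\ge0\}}$.

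Finally, for $\pi_2=\tilde\pi_1=\omega_2^{-1}\boxplus\omega_1^{-1}$: since $\omega_1,\omega_2$ are unitary, $\omega_i^{-1}=\overline{\omega_i}$, so the newform of $\pi_2$ is $\overline{\varphi_{\pi_1}}$, while the Whittaker functional attached to $\pi_2$ uses the conjugate character $\bar\psi=\psi^{-1}$; thus $W_{\pi_2}(g)=\overline{W_{\pi_1}(g)}$ and the formula for $W_{\pi_2}(a(y))$ follows by conjugating the one for $W_{\pi_1}(a(y))$. There is no deep obstacle: the proof is a chain of explicit $2\times2$ matrix manipulations, and the one place requiring care is the first paragraph --- fixing the decomposition $G=\bigsqcup_j B\gamma_j K_1(\fp^m)$ and verifying that the two-case formula is well defined and $K_1(\fp^m)$-invariant on the $j=0$ cell, which is the only spot where the conductor hypotheses $c(\omega_1)=m$, $c(\omega_2)=0$ are used. (Of course one may instead simply cite \cite{schmidt02remarks} for the two formulas and verify only the normalizing constant $\zeta_F(2)^{1/2}/\zeta_F(1)$, which comes from the isometric convention of \cite{michel2010subconvexity}.)
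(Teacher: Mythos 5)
Your proposal is correct, and it in fact supplies more than the paper does: the paper gives no proof of this lemma at all, simply quoting it from Schmidt \cite{schmidt02remarks} (with the normalization constant $\zeta_F(2)^{1/2}/\zeta_F(1)$ coming from the isometric convention of \cite{michel2010subconvexity}). Your reconstruction is the standard argument behind Schmidt's statement and all the steps check out: $c(\pi_1)=c(\omega_1)+c(\omega_2)=m$ with a one-dimensional fixed space, the decomposition $G=\bigsqcup_{j=0}^m B\gamma_j K_1(\fp^m)$ underlying Lemma \ref{Lemma.LocalConstCalculation}, the verification that the inducing character is trivial on $B\cap\gamma_0K_1(\fp^m)\gamma_0^{-1}$ (where one indeed uses exactly $\omega_1|_{1+\fp^m}=\1$ and $\omega_2$ unramified, and where the analogous check fails for $1\le j\le m$, forcing the vanishing on those cells), the identity $\varphi_{\pi_1}(w\,n(t))=\1_{\cO_F}(t)$, and the unramified-$\psi$ integral over $y\cO_F$ giving the cutoff $v(y)\ge 0$. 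Two small remarks: first, your phrase ``the newform of $\pi_2$ is $\overline{\varphi_{\pi_1}}$'' is true in the realization of $\pi_2$ induced from $(\omega_1^{-1},\omega_2^{-1})$ rather than in the model $\omega_2^{-1}\boxplus\omega_1^{-1}$ as literally written in the paper (there the invariant vector is supported on $BK_1(\fp^m)$ instead of $B\gamma_0K_1(\fp^m)$); since the two induced models are isomorphic and the lemma only asserts values of the Whittaker function, which conjugation transports to $\mathcal{W}(\pi_2,\bar\psi)$ exactly as the paper intends, this is a harmless model identification, but worth being aware of. Second, the ``$W_{\pi_3}$'' in the displayed normalization of the statement is a typo of the paper for $W_{\pi_1}$, and your computation correctly recovers $W_{\pi_1}(\mathrm{id})=\zeta_F(2)^{1/2}/\zeta_F(1)$.
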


Now we work on the values of 
$W_{\pi_1}\left(a(y)(\begin{smallmatrix}1&0\\\varpi^j&1\end{smallmatrix})\right)$
for $0\leq j<m$, here $c(\omega_1)=m>0$, $c(\omega_2)=0$.

\begin{Lemma}[cf. \cite{hu2017triple,humphries2020random}]
\label{Lemma.newform1.Whittaker}
We have that
\[W_{\pi_1}\left(a(y)\begin{pmatrix}1&0\\1&1\end{pmatrix}\right)
=\frac{\zeta_F(2)^{1/2}}{\zeta_F(1)}\begin{cases}
\omega_1(y)|y|^{1/2}\psi(y)\epsilon(1,\omega_1\omega_2^{-1},\psi^{-1})
&\text{if }v(y)\geq -m,\\
0&\text{if }v(y)<-m.
\end{cases}\]
\[W_{\pi_2}\left(a(y)\begin{pmatrix}1&0\\1&1\end{pmatrix}\right)
=\frac{\zeta_F(2)^{1/2}}{\zeta_F(1)}\begin{cases}
\omega_1^{-1}(y)|y|^{1/2}\psi(-y)\epsilon(1,\omega_1^{-1}\omega_2,\psi)
&\text{if }v(y)\geq -m,\\
0&\text{if }v(y)<-m.
\end{cases}\]
(Recall that our definition of $W_{\pi}$ differs by an inverse 
with that in \cite{humphries2020random}.)
And for $0<j<m$,
\[W_{\pi_1}\left(a(y)\begin{pmatrix}1&0\\\varpi^j&1\end{pmatrix}\right)
=\frac{\zeta_F(2)^{1/2}}{\zeta_F(1)}\begin{cases}
\omega_2(y)|y|^{1/2}\int_{\cO_F}
\omega_1^{-1}\omega_2(1+x\varpi^j)\psi(-xy)\ dx
&\text{if }v(y)=j-m,\\
0&\text{if }v(y)\neq j-m.
\end{cases}\]
\[W_{\pi_2}\left(a(y)\begin{pmatrix}1&0\\\varpi^j&1\end{pmatrix}\right)
=\frac{\zeta_F(2)^{1/2}}{\zeta_F(1)}\begin{cases}
\omega_2^{-1}(y)|y|^{1/2}\int_{\cO_F}
\omega_1\omega_2^{-1}(1+x\varpi^j)\psi(xy)\ dx
&\text{if }v(y)=j-m,\\
0&\text{if }v(y)\neq j-m.
\end{cases}\]
\end{Lemma}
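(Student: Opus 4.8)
\emph{Proof sketch.} The plan is to substitute the explicit newform $\varphi_{\pi_1}$ of Lemma~\ref{Lemma.newform1} into its Whittaker integral
\[
W_{\pi_1}(g)=\frac{\zeta_F(2)^{1/2}}{\zeta_F(1)}\int_F\varphi_{\pi_1}\bigl(w\,n(x)\,g\bigr)\,\psi^{-1}(x)\ dx ,
\]
with $g=a(y)(\begin{smallmatrix}1&0\\ \varpi^j&1\end{smallmatrix})$, and to evaluate the $x$-integral. A direct multiplication gives
\[
w\,n(x)\,a(y)\begin{pmatrix}1&0\\ \varpi^j&1\end{pmatrix}=\begin{pmatrix}-\varpi^j&-1\\ y+x\varpi^j& x\end{pmatrix}=:M(x),\qquad \det M(x)=y .
\]
By Lemma~\ref{Lemma.newform1}, $\varphi_{\pi_1}$ is left-equivariant under the character $(\begin{smallmatrix}a&*\\0&d\end{smallmatrix})\mapsto\omega_1(a)\omega_2(d)|a/d|^{1/2}$, is right $K_1(\fp^m)$-invariant, and is supported on the single double coset $B(\begin{smallmatrix}1&0\\1&1\end{smallmatrix})K_1(\fp^m)$; that coset consists exactly of the matrices whose bottom row lies in the $K_1(\fp^m)$-orbit of $(1,1)$, which as a subset of $\mathbb P^1(F)$ is $\{(a,b):v(b)\ge v(a)\}$. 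So the first step is to locate the $x$ for which the bottom row $(y+x\varpi^j,x)$ of $M(x)$ lies in that orbit: for $0<j<m$ this happens precisely when $v(x)\ge v(y)$, i.e.\ $x\in y\cO_F$, whereas for $j=0$ it fails only on a single ball, which is why that case produces a genuine Gauss sum rather than an elementary integral.

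For $0<j<m$ I would substitute $x=yt$ with $t\in\cO_F$ (so $dx=|y|\,dt$) and factor $M(yt)=(\begin{smallmatrix}-\varpi^j&*\\0&y\end{smallmatrix})\kappa(t)$ with $\kappa(t)\in K$ having bottom row $(1+t\varpi^j,t)$. Since $1+t\varpi^j\in\cO_F^\times$ and $n(s)\in K_1(\fp^m)$ for every $s\in\cO_F$, right-multiplying $\kappa(t)$ by a suitable $n(s)\in K_1(\fp^m)$ brings it into the shape $n(*)(\begin{smallmatrix}*&0\\0&1+t\varpi^j\end{smallmatrix})a(-1)w$; using $\varphi_{\pi_1}(w)=1$, the unit scalars in the triangular factors cancel and one is left with
\[
W_{\pi_1}\!\left(a(y)\begin{pmatrix}1&0\\ \varpi^j&1\end{pmatrix}\right)=\frac{\zeta_F(2)^{1/2}}{\zeta_F(1)}\,\omega_2(y)\,|y|^{1/2}\int_{\cO_F}\omega_1^{-1}\omega_2(1+t\varpi^j)\,\psi(-ty)\ dt .
\]
The function $t\mapsto\omega_1^{-1}\omega_2(1+t\varpi^j)$ is $\fp^{m-j}$-invariant but not $\fp^{m-j-1}$-invariant, so integrating it against $\psi(-ty)$ gives $0$ unless $v(y)=j-m$, which is the stated support.

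For $j=0$ I would instead first use $(\begin{smallmatrix}1&0\\1&1\end{smallmatrix})=n(1)\,w\,n(1)$ together with $a(y)\,n(1)=n(y)\,a(y)$ and $a(y)\,w=w\,z(y)\,a(y^{-1})$ to obtain
\[
W_{\pi_1}\!\left(a(y)\begin{pmatrix}1&0\\1&1\end{pmatrix}\right)=\psi(y)\,\omega_1\omega_2(y)\,W_{\pi_1}\bigl(w\,n(y^{-1})\,a(y^{-1})\bigr)
\]
(this accounts for the $\psi(y)$ prefactor), and then compute the remaining Whittaker value by the same $M(x)$-analysis; it collapses to a Gauss sum over $\{v(u)\le 0\}$, which Tate's local functional equation identifies with $\epsilon(1,\omega_1\omega_2^{-1},\psi^{-1})$ (note $c(\omega_1\omega_2^{-1})=m$, since $c(\omega_2)=0<m=c(\omega_1)$), and which vanishes unless $v(y)\ge-m$; collecting the leftover powers and unramified twists of $y$ yields the remaining factor $\omega_1(y)|y|^{1/2}$. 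The two formulas for $W_{\pi_2}$ then follow by rerunning everything for $\pi_2=\omega_2^{-1}\boxplus\omega_1^{-1}$, equivalently by the substitution $\omega_1\mapsto\omega_1^{-1}$, $\omega_2\mapsto\omega_2^{-1}$, $\psi\mapsto\psi^{-1}$ (complex conjugation, as remarked after Lemma~\ref{Lemma.newform1}).

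The work is essentially all bookkeeping: carrying the valuation ranges of $x$ through the Iwasawa decomposition of $M(x)$ uniformly, and, in the $j=0$ case, matching the surviving Gauss sum to $\epsilon(1,\omega_1\omega_2^{-1},\psi^{-1})$ with exactly the right power of $q$ and the right argument. This last point is the one that most needs care, because $W_\pi$ is normalised here with $\psi^{-1}$ rather than with $\psi$ as in \cite{humphries2020random}, so an inversion of the additive character must be tracked through every change of variable. Lemmas~\ref{Lemma.newform1} and~\ref{Lemma.LocalConstCalculation} supply all the structural input, so I would expect no real obstacle beyond this.
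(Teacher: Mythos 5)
Your proposal is correct and follows essentially the same route as the paper: unfold the Whittaker integral against the explicit newform of Lemma \ref{Lemma.newform1}, locate the support in $x$ via the coset condition for $B(\begin{smallmatrix}1&0\\1&1\end{smallmatrix})K_1(\fp^m)$, substitute $x=yt$, and pin the support in $y$ by orthogonality of the level-$m$ character $\omega_1^{-1}\omega_2$ against $\psi$ (the same content as the paper's auxiliary Gauss-sum lemma), while your $j=0$ reduction via $(\begin{smallmatrix}1&0\\1&1\end{smallmatrix})=n(1)wn(1)$ and the central character is only a cosmetic variant of the paper's use of \eqref{matrixdecomp.j=0} and \cite[Lemma 5.12]{humphries2020random}, and the conjugation argument for $W_{\pi_2}$ matches the paper. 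One bookkeeping slip worth fixing: the factorization $M(yt)=(\begin{smallmatrix}-\varpi^j&*\\0&y\end{smallmatrix})\kappa(t)$ with $\kappa(t)\in K$ is impossible, since it would force $\det\kappa(t)=-\varpi^{-j}$; the upper triangular factor must have a unit in the upper-left entry (and $y$ in the lower-right), and with that correction your displayed formula $\frac{\zeta_F(2)^{1/2}}{\zeta_F(1)}\,\omega_2(y)|y|^{1/2}\int_{\cO_F}\omega_1^{-1}\omega_2(1+t\varpi^j)\psi(-ty)\,dt$ is exactly what the paper obtains.
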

\begin{proof}
Let \[g=w\cdot n(x)\cdot
a(y)\cdot\begin{pmatrix}1&0\\\varpi^j&1\end{pmatrix}
=\begin{pmatrix}-\varpi^j&-1\\y+x\varpi^j&x\end{pmatrix}.\]
When $j=0$,
\begin{equation}\label{matrixdecomp.j=0}
g=\begin{pmatrix}-1&-1\\y+x&x\end{pmatrix}=
\begin{cases}
\begin{pmatrix}\frac y{x+y}&-1-\frac y{x+y}\\0&y+x\end{pmatrix}
\begin{pmatrix}1&0\\1&1\end{pmatrix}
\begin{pmatrix}1&-\frac y{x+y}\\0&1\end{pmatrix}
&\text{if }v(x+y)\leq v(y),\\
\begin{pmatrix}\frac {y\varpi^r}{x+y}&-1\\0&x\end{pmatrix}
\begin{pmatrix}1&0\\\varpi^r&1\end{pmatrix}
\begin{pmatrix}\frac {x+y}{x\varpi^r}&0\\0&1\end{pmatrix}
&\text{if }v(\frac{x+y}y)=r>0.
\end{cases}\end{equation}
Notice that $v(\tfrac{x+y}y)=r$ 
if and only if $x\in y(-1+\varpi^r\cO_F^\times)$,
and hence $v(\tfrac{x+y}y)>0$ 
if and only if $x\in y(-1+\varpi\cO_F)$.
In particular $v(\tfrac{x+y}y)>0$ implies $\tfrac xy,
\tfrac {x+y}{x\varpi^r}\in\cO_F^\times$.
The calculation of 
$W_{\pi_1}\left(a(y)(\begin{smallmatrix}1&0\\1&1\end{smallmatrix})\right)$
and $W_{\pi_2}$
follows that in \cite[Lemma 5.12]{humphries2020random}.

When $0<j<m$,
let $r=v(\tfrac yx+\varpi^j)$. We have that
\begin{equation}\label{matrixdecomp.j>0}
g=\begin{cases}
\begin{pmatrix}\frac y{y+x\varpi^j}&-\frac y{y+x\varpi^j}-\varpi^j\\
0&y+x\varpi^j\end{pmatrix}
\begin{pmatrix}1&0\\1&1\end{pmatrix}
\begin{pmatrix}1&\frac x{y+x\varpi^j}-1\\0&1\end{pmatrix}
&\text{if }r\leq 0,\\
\begin{pmatrix}\frac {y\varpi^r}{y+x\varpi^j}&-1\\0&x\end{pmatrix}
\begin{pmatrix}1&0\\\varpi^r&1\end{pmatrix}
\begin{pmatrix}(\frac yx+\varpi^j)\varpi^{-r}&0\\0&1\end{pmatrix}
&\text{if }r>0.
\end{cases}\end{equation}
By the definition of $W_\pi$,
\begin{align*}
W_{\pi_1}\left(a(y)\begin{pmatrix}1&0\\\varpi^j&1\end{pmatrix}\right)
&=\frac{\zeta_F(2)^{1/2}}{\zeta_F(1)}\int_{v(x)\geq v(y)}
\omega_1(\frac {y}{y+x\varpi^j})\omega_2(y+x\varpi^j)
\left|\frac y{(y+x\varpi^j)^2}\right|^{1/2}\psi(-x)\ dx\\
&=\frac{\zeta_F(2)^{1/2}}{\zeta_F(1)}
\omega_1(y)|y|^{-1/2}\int_{v(x)\geq v(y)}
\omega_1^{-1}\omega_2(y+x\varpi^j)\psi(-x)\ dx.
\end{align*}
For $j>0$ define $U_j=1+\fp^j$. 
Let $x=(u-1)y\varpi^{-j}$. We have $y+x\varpi^j=yu$ and
\[r\leq 0\Leftrightarrow v(yu)\leq v((u-1)y\varpi^{-j})
\Leftrightarrow v(\tfrac{u-1}{u})\geq j\Leftrightarrow u^{-1}\in U_j
\Leftrightarrow u\in U_j,\]
and then
\begin{align*}
W_{\pi_1}\left(a(y)\begin{pmatrix}1&0\\\varpi^j&1\end{pmatrix}\right)
&=\frac{\zeta_F(2)^{1/2}}{\zeta_F(1)}
\omega_1(y)|y|^{-1/2}\int_{U_j}
\omega_1^{-1}\omega_2(yu)\psi(-(u-1)y\varpi^{-j})|y\varpi^{-j}|\ du\\
&=\frac{\zeta_F(2)^{1/2}}{\zeta_F(1)}
\omega_2(y)|y|^{1/2}\psi(y\varpi^{-j})q^j
\int_{U_j}\omega_1^{-1}\omega_2(u)\psi(-y\varpi^{-j}u)\ du.
\end{align*}
By the following lemma, 
$W_{\pi_1}\left(a(y)(\begin{smallmatrix}1&0\\\varpi^j&1\end{smallmatrix})\right)=0$
unless $v(y)=j-m$,
in which case
\begin{align*}
W_{\pi_1}\left(a(y)\begin{pmatrix}1&0\\\varpi^j&1\end{pmatrix}\right)
&=\frac{\zeta_F(2)^{1/2}}{\zeta_F(1)}
\omega_2(y)q^{-(j-m)/2}\psi(y\varpi^{-j})q^j
q^{-m}\sum_{b\in U_j/U_m}\omega_1^{-1}\omega_2(b)\psi(-by\varpi^{-j})\\
&=\frac{\zeta_F(2)^{1/2}}{\zeta_F(1)}
\omega_2(y)|y|^{-1/2}\psi(y\varpi^{-j})
\sum_{b\in U_j/U_m}\omega_1^{-1}\omega_2(b)\psi(-by\varpi^{-j}).
\end{align*}
\end{proof}

\begin{Lemma}
Let $\psi$ be an unramified additive character of $F$
and $\omega$ a ramified character of $F^\times$ with level $c(\omega)$.
For a positive integer $j$ let $U_j=1+\fp^j$ be a subgroup of $\cO_F^\times$.
Then for $0<j<c(\omega)$,
\[\int_{U_j}
\omega(u)\psi(au)\ du=
\begin{cases}
q^{-c(\omega)}\sum\limits_{b\in U_j/U_{c(\omega)}}\omega(b)\psi(ab)
&\text{if }v(a)= -c(\omega),\\
0&\text{if }v(a)\neq -c(\omega).
\end{cases}\]
\end{Lemma}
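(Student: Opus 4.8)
The plan is to evaluate the integral $\int_{U_j}\omega(u)\psi(au)\,du$ by partitioning $U_j$ into cosets of the smaller subgroup $U_{c(\omega)}=1+\fp^{c(\omega)}$, on which $\omega$ is trivial by the definition of the conductor exponent. First I would write $U_j=\bigsqcup_{b}\,bU_{c(\omega)}$ where $b$ runs over representatives of $U_j/U_{c(\omega)}$. On each coset we substitute $u=b(1+t)$ with $t\in\fp^{c(\omega)}$; then $\omega(u)=\omega(b)$ since $\omega|_{U_{c(\omega)}}=\1$, and $\psi(au)=\psi(ab)\psi(abt)$. Pulling the constants out, the contribution of the coset is $\omega(b)\psi(ab)\int_{\fp^{c(\omega)}}\psi(abt)\,dt$, and since $\vol(\fp^{c(\omega)};dt)=q^{-c(\omega)}$ with our normalization, this inner integral equals $q^{-c(\omega)}$ when $\psi$ is trivial on $ab\cdot\fp^{c(\omega)}=\fp^{c(\omega)+v(a)}$ (i.e. when $v(a)\ge -c(\omega)$, using that $\psi$ is unramified), and equals $0$ otherwise (the additive character $t\mapsto\psi(abt)$ being nontrivial on the additive group $\fp^{c(\omega)}$ integrates to zero).

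This already shows the integral vanishes unless $v(a)\ge -c(\omega)$. To finish I would show that the case $v(a)>-c(\omega)$ also contributes $0$, so that the only surviving case is $v(a)=-c(\omega)$ as claimed. Here the point is that when $v(a)\ge -(c(\omega)-1)$, the character $u\mapsto\psi(au)$ is already trivial on $U_{c(\omega)-1}\supseteq U_j$ (since $j<c(\omega)$ gives $j\le c(\omega)-1$, so $U_j\supseteq U_{c(\omega)-1}$... — more precisely, for $v(a)\ge -(c(\omega)-1)$ one checks $\psi(a\cdot\fp^{j})=1$ is too strong, so instead argue as follows): write $\int_{U_j}\omega(u)\psi(au)\,du$ and note that when $v(a)\ge -(c(\omega)-1)$ the inner-coset integrals $\int_{\fp^{c(\omega)}}\psi(abt)\,dt$ all equal $q^{-c(\omega)}$, reducing the whole integral to $q^{-c(\omega)}\sum_{b\in U_j/U_{c(\omega)}}\omega(b)\psi(ab)$. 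One then observes this sum can be re-expressed as $q^{-(c(\omega)-1)}\sum_{b\in U_j/U_{c(\omega)-1}}\omega(b)\psi(ab)$ whenever $\psi$ is additionally trivial on $a\cdot\fp^{c(\omega)-1}$, i.e. precisely when $v(a)\ge -(c(\omega)-1)$; but $\sum_{b\in U_j/U_{c(\omega)-1}}\omega(b)$, with a further averaging over the subgroup $U_{c(\omega)-1}/U_{c(\omega)}$ if needed, now involves a nontrivial character sum over a group and vanishes because $\omega$ is nontrivial on $U_{c(\omega)-1}$ by the very definition of $c(\omega)$ (that is the first subgroup $U_i$ on which $\omega$ is nontrivial is $U_{c(\omega)-1}$). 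So the $v(a)>-c(\omega)$ contribution collapses to a sum of $\omega$ over a group on which $\omega$ is nontrivial, hence $0$.

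I would package the two halves cleanly: vanishing for $v(a)<-c(\omega)$ comes from the \emph{inner} ($U_{c(\omega)}$-coset) integral, and vanishing for $v(a)>-c(\omega)$ comes from the \emph{outer} character sum $\sum_b\omega(b)\psi(ab)$ degenerating (the $\psi$-factor becomes constant on a subgroup and orthogonality of $\omega$ against the trivial character kills it). In the remaining case $v(a)=-c(\omega)$ exactly, the inner integrals all give $q^{-c(\omega)}$ and no further collapse occurs, leaving the stated formula $q^{-c(\omega)}\sum_{b\in U_j/U_{c(\omega)}}\omega(b)\psi(ab)$.

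The main obstacle I anticipate is the bookkeeping in the $v(a)>-c(\omega)$ case: one must be careful that $j$ can be any integer in the range $0<j<c(\omega)$, so the index set $U_j/U_{c(\omega)}$ varies, and the argument that the partial character sum vanishes needs the nontriviality of $\omega$ on $U_{c(\omega)-1}$ to be leveraged through the tower $U_j\supseteq U_{j+1}\supseteq\cdots\supseteq U_{c(\omega)}$. A clean way is to fix the largest subgroup $U_{c(\omega)-1}$, note $U_j\supseteq U_{c(\omega)-1}$, and factor the sum over $U_j/U_{c(\omega)}$ as a sum over $U_j/U_{c(\omega)-1}$ of inner sums over $U_{c(\omega)-1}/U_{c(\omega)}$; the inner sum is $\sum_{b'\in U_{c(\omega)-1}/U_{c(\omega)}}\omega(b')$, which is the sum of the nontrivial character $\omega$ over a nontrivial finite group and therefore vanishes — provided $\psi$ is trivial on the relevant coset, which is exactly the hypothesis $v(a)\ge -(c(\omega)-1)$. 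Everything else is routine measure-normalization tracking.
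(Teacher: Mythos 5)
Your proposal is correct and follows essentially the same route as the paper: decompose into cosets of a subgroup $U_r$, use the unramifiedness of $\psi$ to evaluate or kill the inner additive integral (giving vanishing for $v(a)<-c(\omega)$ and the stated formula at $v(a)=-c(\omega)$), and use the nontriviality of $\omega$ on $U_{c(\omega)-1}$ together with orthogonality to kill the case $v(a)>-c(\omega)$; the paper simply chooses $r=c(\omega)$ or $r=c(\omega)-1$ at the outset depending on the case, whereas you always split by $U_{c(\omega)}$ and then regroup the resulting character sum over $U_{c(\omega)-1}/U_{c(\omega)}$, which is the same computation in a slightly different order. The momentary inclusion mix-up ($U_j\supseteq U_{c(\omega)-1}$, not the reverse) is harmless since your final packaging states it correctly.
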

\begin{proof}
We follow the proof of \cite[Lemma 1.1.1]{schmidt02remarks}.
Write $u=bu'$ for $b\in U_j/U_{r}$ and $u'\in U_{r}$.
Then 
\[\int_{U_j}
\omega(u)\psi(au)\ du=
\sum_{b\in U_j/U_{r}}\omega(b)
\int_{U_{r}}\omega(u')\psi(abu')\ du'.\]

If $v(a)\leq -c(\omega)$, 
we take $r=c(\omega)$ and then $\omega(u)=1$.
The inner integral becomes
\[\int_{U_{r}}\psi(abu')\ du'
=\psi(ab)\int_{\fp^{r}}\psi(abz)\ dz.\]
It vanishes when $v(a)<-c(\omega)$. And when 
$v(a)=-c(\omega)$ it equals
$\psi(ab)\int_{\fp^{r}}\ dz=q^{-c(\omega)}\psi(ab)$.

If $v(a)> -c(\omega)$,
we take $r=c(\omega)-1$ and then $\psi(ab(u'-1))=1$ 
because that $\psi$ is unramified.
The inner integral becomes
\[\int_{U_{r}}\omega(u')\psi(ab)\psi(ab(u'-1))\ du'=
\psi(ab)\int_{U_{r}}\omega(u')\ du'=0\]
\end{proof}


To study the values of newforms in $\pi_3$ we have the following lemma.

\begin{Lemma}[\cite{schmidt02remarks,humphries2020random}]
\label{newform.induced}\ \\
\begin{itemize}
\item For $\pi_3=\St_{\omega_3}$ with $\omega_3$ unitary and unramified,
the newform in the induced model is
\[\varphi_{\pi_3}(g)=\begin{cases}
\omega_3(ad)\left|\dfrac ad\right|&\text{if }
g=\begin{pmatrix}a&b\\0&d\end{pmatrix}
\begin{pmatrix}1&0\\1&1\end{pmatrix}
k,\ k\in K_1(\fp),\\
-q\ \omega_3(ad)\left|\dfrac ad\right|&\text{if }
g=\begin{pmatrix}a&b\\0&d\end{pmatrix}
k,\ k\in K_1(\fp).
\end{cases}\]
Its corresponding Whittaker function has 
$W_{\pi_3}(\begin{smallmatrix}1&0\\0&1\end{smallmatrix})
=\zeta_F(2)^{-1/2}$; and for any $y\in F^\times$,
\[W_{\pi_3}(a(y))
=\zeta_F(2)^{-1/2}\cdot\begin{cases}
\omega_3(y)|y|&\text{if }v(y)\geq 0,\\
0&\text{if }v(y)< 0;\end{cases}\]
\[W_{\pi_3}\left(a(y)\begin{pmatrix}1&0\\1&1\end{pmatrix}\right)
=-\zeta_F(2)^{-1/2}\cdot\begin{cases}
q^{-1}\psi(y)\omega_3(y)|y|&\text{if }v(y)\geq -1,\\
0&\text{if }v(y)< -1.\end{cases}\]
\item For $\pi_3=\omega_3\boxplus\omega_3^{-1}$ with $\omega_3$ unitary and unramified,
the newform in the induced model is
\[\varphi_{\pi_3}(g)=
\omega_3\left(\frac ad\right)\left|\dfrac ad\right|^{1/2}
\quad\text{for }
g=\begin{pmatrix}a&b\\0&d\end{pmatrix}
k,\ k\in K;\]
\[W_{\pi_3}\begin{pmatrix}1&0\\0&1\end{pmatrix}
=\dfrac{\zeta_F(2)^{1/2}}{\zeta_F(1)L(1,\omega_3^2)}; \]
and for any $y\in F^\times$,
\[W_{\pi_3}(a(y))
=\frac{\zeta_F(2)^{1/2}}{\zeta_F(1)L(1,\omega_3^2)}\cdot\begin{cases}
|y|^{1/2}\sum\limits_{\substack{i,i'\geq 0 \\i+i'=v(y)}}
\omega_3(\varpi^i)\omega_3^{-1}(\varpi^{i'})&\text{if }v(y)\geq 0,\\
0&\text{if }v(y)< 0.\end{cases}\]
\end{itemize}
\end{Lemma}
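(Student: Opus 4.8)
The plan is to determine the newform $\varphi_{\pi_3}$ in each induced model and then transport it to the Whittaker model via the Jacquet integral $W_{\pi_3}(g)=\frac{\zeta_F(2)^{1/2}}{\zeta_F(1)}\int_F\varphi_{\pi_3}(w\,n(x)\,g)\,\psi^{-1}(x)\,dx$; both statements are classical (\cite{schmidt02remarks,humphries2020random}), so the work is bookkeeping rather than anything conceptual.

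For the unramified principal series $\pi_3=\omega_3\boxplus\omega_3^{-1}$ the Iwasawa decomposition $G=BK$ together with right $K$-invariance forces the spherical (hence new) vector to be $\varphi_{\pi_3}\!\left(\left(\begin{smallmatrix}a&b\\0&d\end{smallmatrix}\right)k\right)=\omega_3(a/d)\,|a/d|^{1/2}$, which is well defined because $\omega_3$ is trivial on $\cO_F^\times$ and which spans the one-dimensional space of $K$-fixed vectors. For its Whittaker function I would write $w\,n(x)\,a(y)=\left(\begin{smallmatrix}0&-1\\y&x\end{smallmatrix}\right)$ in Iwasawa form, noting that $\varphi_{\pi_3}$ is constant on the ball $v(x)\geq v(y)$ and on each annulus $v(x)=j<v(y)$, so that the Jacquet integral reduces to a finite combination of elementary integrals $\int_{v(x)=j}\psi^{-1}(x)\,dx$, those with $j\leq-2$ vanishing since $\psi$ is unramified. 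Reassembling the surviving terms, the $j$-sum collapses to $\left(1-\omega_3^2(\varpi)q^{-1}\right)|y|^{1/2}\sum_{i+i'=v(y)}\omega_3(\varpi)^i\omega_3^{-1}(\varpi)^{i'}$ for $v(y)\geq0$ (and to $0$ otherwise); multiplying by the prefactor gives the claimed formula, the $L(1,\omega_3^2)$ in the normalization being the reciprocal of the collapsed factor, which one double-checks at $y=1$.

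For the Steinberg representation $\pi_3=\St_{\omega_3}$ with $\omega_3$ unitary and unramified, I would realize $\St_{\omega_3}$ as the infinite-dimensional constituent of the reducible principal series $\omega_3|\cdot|^{1/2}\boxplus\omega_3|\cdot|^{-1/2}$ --- this is where the conductor exponent jumps to $1$ --- and use the decomposition $K=(B\cap K)K_1(\fp)\sqcup(B\cap K)\left(\begin{smallmatrix}1&0\\1&1\end{smallmatrix}\right)K_1(\fp)$, i.e.\ the $j=0,1$ cells of Lemma~\ref{Lemma.LocalConstCalculation} with $m=1$. One checks directly that the displayed two-case function $\varphi_{\pi_3}$ is $K_1(\fp)$-invariant, transforms correctly on the left under $B$, and lies in $\St_{\omega_3}$, equivalently is annihilated by the standard intertwining operator $\varphi\mapsto\int_F\varphi(w\,n(x)\,\cdot)\,dx$ (whose kernel is exactly $\St_{\omega_3}$, the image being the one-dimensional constituent $\omega_3\circ\det$); this vanishing is what pins down the constant $-q$ on the small cell. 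Uniqueness of the newform then identifies it, and feeding this explicitly supported $\varphi_{\pi_3}$ into the Jacquet integral --- split according to which of the two cells $w\,n(x)\,g$ falls into --- produces the stated piecewise formulas for $W_{\pi_3}(a(y))$ and $W_{\pi_3}\!\left(a(y)\left(\begin{smallmatrix}1&0\\1&1\end{smallmatrix}\right)\right)$, with $W_{\pi_3}(1)=\zeta_F(2)^{-1/2}$ falling out of the now genuinely finite integral.

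The main obstacle is purely organizational: keeping straight the Iwasawa decompositions of $w\,n(x)\,a(y)\left(\begin{smallmatrix}1&0\\\varpi^j&1\end{smallmatrix}\right)$ as $v(x)$ varies --- and, in the Steinberg case, which $K_1(\fp)$-cell each representative falls into --- together with tracking the normalizing constants and the $L(1,\omega_3^2)$-factor through the reassembly. The only slightly subtle point is choosing the new vector inside the correct constituent in the Steinberg case, but this is forced by $K_1(\fp)$-invariance together with the reducibility of the ambient principal series, so no genuine difficulty arises.
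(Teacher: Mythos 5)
Your proposal is correct and follows essentially the route the paper relies on: the paper states this lemma by citing \cite{schmidt02remarks,humphries2020random} and, in proving Lemma \ref{oldform.Whittaker}, carries out the very same Jacquet-integral computation (Iwasawa decomposition of $w\,n(x)\,a(y)$ together with the unramified additive-character integral \eqref{addcharint}), of which your calculation is the $l=j=0$ case. Your one added ingredient --- pinning down the constant $-q$ by requiring the $K_1(\fp)$-invariant vector in $\omega_3|\cdot|^{1/2}\boxplus\omega_3|\cdot|^{-1/2}$ to be annihilated by the standard intertwining operator (equivalently, to pair to zero against the one-dimensional constituent of the contragredient) --- is a correct substitute for the citation to \cite{schmidt02remarks}, and the resulting normalizations check out ($W_{\pi_3}(1)=\zeta_F(2)^{-1/2}$ in the Steinberg case and $\zeta_F(2)^{1/2}/(\zeta_F(1)L(1,\omega_3^2))$ in the spherical case).
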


Notice that $\left(\pi_3(\begin{smallmatrix}\varpi^{-l}&0\\0&1\end{smallmatrix})
W_{\pi_3}\right)$ is $K_1(\fp^{c(\pi_3)+l})$-invariant.
To study the oldforms we need the values of
\[\left(\pi_3\begin{pmatrix}\varpi^{-l}&0\\0&1\end{pmatrix}
W_{\pi_3}\right)
\left(a(y)\begin{pmatrix}1&0\\\varpi^j&1\end{pmatrix}\right)\quad
\text{for }1\leq l\leq c(\chi_D)-c(\pi_3),\ 0\leq j\leq c(\pi_3)+l.\]
Actually, in this paper, only the case when $j=0$ is necessary
(see Section \ref{section.localconst.calculation}).

\begin{Lemma}\label{oldform.Whittaker}\ Let $l,j\geq 0$ be two integers.
\begin{itemize}
\item For $\pi_3=\St_{\omega_3}$, if $j\leq l$,
$\left(\pi_3(\begin{smallmatrix}\varpi^{-l}&0\\0&1\end{smallmatrix})
W_{\pi_3}\right)
\left(a(y)(\begin{smallmatrix}1&0\\\varpi^j&1\end{smallmatrix})\right)$
is equal to
\[-\zeta_F(2)^{-1/2}\cdot
\begin{cases}
\psi(y\varpi^{-j})
\omega_3(y\varpi^{-l})|y\varpi^{l-2j+1}|
&\text{if }v(y)\geq 2j-l-1\\
0&\text{if }v(y)< 2j-l-1;
\end{cases}\]
if $j>l$, it is equal to
\[\zeta_F(2)^{-1/2}\cdot
\begin{cases}
\omega_3(y\varpi^{-l})|y\varpi^{-l}|
&\text{if }v(y)\geq l\\
0&\text{if }v(y)< l.
\end{cases}\]

\item For $\pi_3=\omega_3\boxplus\omega_3^{-1}$, if $j\leq l$,
$\left(\pi_3(\begin{smallmatrix}\varpi^{-l}&0\\0&1\end{smallmatrix})
W_{\pi_3}\right)
\left(a(y)(\begin{smallmatrix}1&0\\\varpi^j&1\end{smallmatrix})\right)$
is equal to
\[\frac{\zeta_F(2)^{1/2}}{\zeta_F(1)L(1,\omega_3^2)}
\cdot\begin{cases}
\psi(y\varpi^{-j})|y\varpi^{l-2j}|^{1/2}
\sum\limits_{\substack{i,i'\geq 0\\i+i'=v(y)+l-2j}} 
\omega_3(\varpi^i)\omega_3^{-1}(\varpi^{i'})
&\text{if }v(y)\geq 2j-l\\
0&\text{if }v(y)< 2j-l;
\end{cases}\]
if $j> l$, it is equal to
\[\frac{\zeta_F(2)^{1/2}}{\zeta_F(1)L(1,\omega_3^2)}
\cdot\begin{cases}
|y\varpi^{-l}|^{1/2}
\sum\limits_{\substack{i,i'\geq 0\\i+i'=v(y)-l}} 
\omega_3(\varpi^i)\omega_3^{-1}(\varpi^{i'})
&\text{if }v(y)\geq l\\
0&\text{if }v(y)<l.
\end{cases}\]
\end{itemize}
\end{Lemma}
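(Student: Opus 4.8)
The plan is to collapse the left translate onto a single matrix and then read the answer off the values of $W_{\pi_3}$ already recorded in Lemma~\ref{newform.induced}. First I would observe that
\[\left(\pi_3\begin{pmatrix}\varpi^{-l}&0\\0&1\end{pmatrix}W_{\pi_3}\right)
\left(a(y)\begin{pmatrix}1&0\\\varpi^j&1\end{pmatrix}\right)
=W_{\pi_3}\left(\begin{pmatrix}y\varpi^{-l}&0\\\varpi^{j-l}&1\end{pmatrix}\right)
=W_{\pi_3}\left(a(y\varpi^{-l})\begin{pmatrix}1&0\\\varpi^{j-l}&1\end{pmatrix}\right),\]
so the result depends only on $y':=y\varpi^{-l}$ and $s:=j-l$, and the dichotomy of the lemma is exactly $s>0$ versus $s\le 0$. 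When $s>0$ the matrix $\begin{pmatrix}1&0\\\varpi^{s}&1\end{pmatrix}$ lies in $K_1(\fp^{c(\pi_3)})$ (it belongs to $K$ when $\pi_3$ is unramified and to $K_1(\fp)$ when $\pi_3=\St_{\omega_3}$), so right $K_1(\fp^{c(\pi_3)})$-invariance of the Whittaker newform reduces the expression to $W_{\pi_3}(a(y\varpi^{-l}))$, which Lemma~\ref{newform.induced} evaluates directly and which is readily checked to coincide with the asserted ``$j>l$'' formula in both cases.

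The substantive range is $s\le 0$, i.e.\ $l\ge j$. Here I would use the elementary matrix identity
\[\begin{pmatrix}y\varpi^{-l}&0\\\varpi^{j-l}&1\end{pmatrix}
=z(\varpi^{j-l})\,n(y\varpi^{-j})\,a(y\varpi^{l-2j})\,w\,n(\varpi^{l-j}),\]
in which the last factor $n(\varpi^{l-j})$ has integral entry and hence lies in $K_1(\fp^{c(\pi_3)})$; combining the defining equivariance $W_{\pi_3}(n(x)g)=\psi(x)W_{\pi_3}(g)$, the central-character transformation under $Z$, and right $K_1(\fp^{c(\pi_3)})$-invariance then yields
\[W_{\pi_3}\left(\begin{pmatrix}y\varpi^{-l}&0\\\varpi^{j-l}&1\end{pmatrix}\right)
=\omega_{\pi_3}(\varpi^{j-l})\,\psi(y\varpi^{-j})\,W_{\pi_3}\!\left(a(y\varpi^{l-2j})\,w\right),\]
where $\omega_{\pi_3}$ is the central character of $\pi_3$. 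For $\pi_3=\omega_3\boxplus\omega_3^{-1}$ this already closes the argument: $c(\pi_3)=0$, $\omega_{\pi_3}$ is trivial, and $w\in K$, so right $K$-invariance collapses the right-hand side to $\psi(y\varpi^{-j})\,W_{\pi_3}(a(y\varpi^{l-2j}))$, and inserting the spherical Whittaker value of Lemma~\ref{newform.induced}, with the support condition $v(y\varpi^{l-2j})\ge 0$ rewritten as $v(y)\ge 2j-l$, reproduces the stated sum.

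For $\pi_3=\St_{\omega_3}$ everything is reduced to the single quantity $W_{\pi_3}(a(y')\,w)$. I would compute this from the defining integral $W_{\pi_3}(g)=\tfrac{\zeta_F(2)^{1/2}}{\zeta_F(1)}\int_F\varphi_{\pi_3}(w\,n(x)\,g)\,\psi^{-1}(x)\,dx$ together with the explicit induced newform of Lemma~\ref{newform.induced}: one has $w\,n(x)\,a(y')\,w=\begin{pmatrix}-1&0\\x&-y'\end{pmatrix}$, and splitting the integral according to whether $v(x)>v(y')$, $v(x)=v(y')$, or $v(x)<v(y')$ — so that this matrix lands, respectively, in the coset on which $\varphi_{\pi_3}$ carries the extra factor $-q$, in the coset represented by $\begin{pmatrix}1&0\\1&1\end{pmatrix}$, and (after extracting a copy of $w$, on which $\varphi_{\pi_3}$ is again nonzero) once more in that coset — turns each contribution into an elementary integral of $\psi^{-1}$ over some $\varpi^k\cO_F^\times$; summing the three ranges and using $\tfrac{\zeta_F(2)^{1/2}}{\zeta_F(1)}(1+q^{-1})=\zeta_F(2)^{-1/2}$ gives $W_{\pi_3}(a(y')w)=-\zeta_F(2)^{-1/2}q^{-1}\omega_3(y')|y'|$ when $v(y')\ge -1$ and $0$ otherwise (the same value also follows from the Atkin--Lehner relation for the Steinberg newvector). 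Feeding this back, the factor $\omega_{\pi_3}(\varpi^{j-l})=\omega_3(\varpi)^{2(j-l)}$ merges with $\omega_3(y\varpi^{l-2j})$ into $\omega_3(y\varpi^{-l})$ because $2(j-l)+(l-2j)=-l$, while $q^{-1}|y\varpi^{l-2j}|=|y\varpi^{l-2j+1}|$ and $v(y\varpi^{l-2j})\ge -1$ becomes $v(y)\ge 2j-l-1$; this is exactly the ``$j\le l$'' formula, and at $j=l$ it is checked to reduce to the value of $W_{\pi_3}\left(a(y\varpi^{-l})\begin{pmatrix}1&0\\1&1\end{pmatrix}\right)$ supplied directly by Lemma~\ref{newform.induced}.

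The step I expect to be the main obstacle is the Steinberg case with $s<0$: one has to identify correctly which coset $\begin{pmatrix}-1&0\\x&-y'\end{pmatrix}$ falls into on each of the three $x$-ranges — in particular the ``hidden'' $w$ that surfaces when $v(x)<v(y')$, on which $\varphi_{\pi_3}$ does not vanish — and then keep every normalizing constant and valuation inequality in step so that the central-character factor $\omega_{\pi_3}(\varpi^{j-l})$ combines cleanly with the rest. This is analogous to the bookkeeping in the proof of Lemma~\ref{Lemma.newform1.Whittaker}; everything else is routine.
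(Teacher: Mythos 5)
Your argument is correct, and it takes a genuinely different route from the paper. The paper proves this lemma by brute force: it inserts $g\left(\begin{smallmatrix}\varpi^{-l}&0\\0&1\end{smallmatrix}\right)$ with $g=w\,n(x)\,a(y)\left(\begin{smallmatrix}1&0\\\varpi^j&1\end{smallmatrix}\right)$ into the defining Whittaker integral, writes down an explicit Iwasawa decomposition depending on $r=v(\tfrac yx+\varpi^j)$, and evaluates five separate range integrals ($r<0$, $0\le r<j$ or $\le l$, $r=j$, the middle range, $r>l$ or $>j$) before summing. You instead work on the argument of $W_{\pi_3}$ itself: the identity $\left(\begin{smallmatrix}y\varpi^{-l}&0\\\varpi^{j-l}&1\end{smallmatrix}\right)=z(\varpi^{j-l})\,n(y\varpi^{-j})\,a(y\varpi^{l-2j})\,w\,n(\varpi^{l-j})$ (which I checked) together with right $K_1(\fp^{c(\pi_3)})$-invariance, the $\psi$-equivariance under $N$, and the central character reduces everything for $j\le l$ to the single value $W_{\pi_3}(a(y')w)$, while for $j>l$ the matrix $\left(\begin{smallmatrix}1&0\\\varpi^{j-l}&1\end{smallmatrix}\right)$ is absorbed into $K_1(\fp^{c(\pi_3)})$. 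In the spherical case $w\in K$ kills even that last step, and in the Steinberg case your coset analysis of $\left(\begin{smallmatrix}-1&0\\x&-y'\end{smallmatrix}\right)$ over the three ranges $v(x)>v(y')$, $v(x)=v(y')$, $v(x)<v(y')$ is the right one (the last range does land in the $\left(\begin{smallmatrix}1&0\\1&1\end{smallmatrix}\right)$-coset via the extracted $w$), and the resulting value $W_{\pi_3}(a(y')w)=-\zeta_F(2)^{-1/2}q^{-1}\omega_3(y')|y'|$ for $v(y')\ge -1$ is correct, as is the bookkeeping $\omega_3(\varpi)^{2(j-l)}\omega_3(y\varpi^{l-2j})=\omega_3(y\varpi^{-l})$ and $q^{-1}|y\varpi^{l-2j}|=|y\varpi^{l-2j+1}|$. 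What your approach buys is a much shorter and less error-prone derivation that only requires one small integral (or the Atkin--Lehner relation) and makes transparent why the answer depends only on $y\varpi^{-l}$ and $j-l$; what the paper's computation buys is uniformity with its neighboring lemmas, since the same decomposition machinery is reused verbatim for the principal-series Whittaker values at all $j$, but as a proof of this particular statement your argument is complete and preferable.
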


Recall that \cite[Lemma 5.17]{humphries2020random} 
calculates the spherical case 
$\pi_3=\omega_3\boxplus\omega_3^{-1}$ for $l=1$, $j=0$.

\begin{proof}
One can verify that 
\[a(y)
\begin{pmatrix}1&0\\\varpi^j&1\end{pmatrix}
\begin{pmatrix}\varpi^{-l}&0\\0&1\end{pmatrix}
=\begin{pmatrix}y\varpi^{-l}&0\\\varpi^{j-l}&1\end{pmatrix}
=a(y\varpi^{-l})
\begin{pmatrix}1&0\\\varpi^{j-l}&1\end{pmatrix}.\]
Since $W_{\pi_3}$ is $K_1(\fp)$-invariant in both cases, we have
\[\left(\pi_3\begin{pmatrix}\varpi^{-l}&0\\0&1\end{pmatrix}
W_{\pi_3}\right)
\left(a(y)\begin{pmatrix}1&0\\\varpi^j&1\end{pmatrix}\right)
=W_{\pi_3}\big(a(y\varpi^{-l})\big)\quad
\text{when $j> l$}.\]

When $j\leq l$ we have the following Iwasawa decomposition
\begin{equation}\label{eqn.Iwasawa}
\begin{pmatrix}1&0\\\varpi^{j-l}&1\end{pmatrix}
=\begin{pmatrix}\varpi^{l-j}&1-\varpi^{l-j}\\0&\varpi^{j-l}\end{pmatrix}
\begin{pmatrix}1&0\\1&1\end{pmatrix}
\begin{pmatrix}1&\varpi^{l-j}-1\\0&1\end{pmatrix},
\end{equation}
and then
\[\begin{aligned}
\begin{pmatrix}y&0\\0&1\end{pmatrix}
\begin{pmatrix}1&0\\\varpi^j&1\end{pmatrix}
\begin{pmatrix}\varpi^{-l}&0\\0&1\end{pmatrix}
&=\begin{pmatrix}y\varpi^{-j}&y\varpi^{-l}-y\varpi^{-j}\\
0&\varpi^{j-l}\end{pmatrix}
\begin{pmatrix}1&0\\1&1\end{pmatrix}
\begin{pmatrix}1&\varpi^{l-j}-1\\0&1\end{pmatrix}\\
&\in z(\varpi^{j-l})\cdot n(y\varpi^{-j}-y\varpi^{l-2j})\cdot a(y \varpi^{l-2j})\cdot
\begin{pmatrix}1&0\\1&1\end{pmatrix}K_1(\fp).
\end{aligned}\]
By the proposition of Whittaker model $W_{\pi_3}(n(x)g)=\psi(x)W_{\pi_3}(g)$ we have
\begin{multline*}
\left(\pi_3\begin{pmatrix}\varpi^{-l}&0\\0&1\end{pmatrix}
W_{\pi_3}\right)
\left(a(y)\begin{pmatrix}1&0\\\varpi^j&1\end{pmatrix}\right)\\
=\omega_{\pi_3}(\varpi^{j-l})\psi(y\varpi^{-j})\psi^{-1}(y\varpi^{l-2j})
W_{\pi_3}\left(a(y \varpi^{l-2j})\begin{pmatrix}1&0\\1&1\end{pmatrix}\right)\quad
\text{when $j\leq l$},
\end{multline*}
where $\omega_{\pi_3}$ is the central character for $\pi_3$. 

At last one can show Lemma \ref{oldform.Whittaker} from Lemma \ref{newform.induced},
noticing that the central character of $\St_{\omega_3}$ is $\omega_3^2$,
and that $\psi^{-1}(y\varpi^{l-2j})=1$ when $v(y)\geq 2j-l$.

\end{proof}


\subsection{Whittaker functions for supercuspidal representations}
\label{section.supercuspidal}

For a supercuspidal representation $\pi$ of $G=\GL_2(F)$,
given the fixed additive character $\psi$, 
the Kirillov model of $\pi$ is 
a unique realization on the space of Schwartz functions 
$\varphi_\pi\in\mathcal{S}(F^\times)$ such that
\begin{equation}\label{KirillovAction.Borel}
\left(\pi\begin{pmatrix}a&b\\0&d\end{pmatrix}\varphi_\pi\right)(x)
=\omega_{\pi}(d)\psi(bd^{-1}x)\varphi_\pi(ad^{-1}x),
\end{equation}
where $\omega_{\pi}$ is the central character for $\pi$
(which is trivial in this paper).
The Whittaker function $W_{\pi}$ corresponding to $\varphi_\pi$
satisfies
\[\varphi_\pi(y)=W_{\pi}(a(y)),\quad
W_{\pi}(g)=\left(\pi(g)\varphi_\pi\right)(1),\quad
\text{and }\langle\varphi_\pi,\tilde\varphi_\pi\rangle
=\langle W_\pi,\widetilde{W}_\pi\rangle,\]
where the invariant bilinear pairing on $\pi\otimes\tilde\pi$ on the Kirillov model 
is given by \[\langle \varphi_\pi,\tilde\varphi_\pi\rangle
:=\int_{F^\times}\varphi_\pi(y)\tilde\varphi_\pi(y)\ d^\times y.\] 
In particular we have 
$W_{\pi}(a(y)g)=\left(\pi(g)\varphi_\pi\right)(y)$
for $y\in F^\times$
and $W_{\pi}(n(x)g)=\psi(x)W(g)$ for $x\in F$.

For any function $\varphi\in\mathcal{S}(F^\times)$ 
in the Kirillov model of $\pi$
which is supported only at 
$\varpi^r\cO_F^\times$,
$\varphi(\varpi^{r}x)$ can be written as a linear combination
of characters on $\cO_F^\times$
by Fourier inversion: 
\begin{equation}\label{Fourier}
\varphi(\varpi^{r}x)=
\sum_{\nu\in\widehat{\cO_F^\times}}a_\nu(\varphi)\nu(x),
\quad\text{where }
a_\nu(\varphi):=\int_{\cO_F^\times}\varphi(\varpi^{r}x)\nu^{-1}(x)\ d^\times x.
\end{equation}
We say that $\varphi$ contains level $n$ components
if $a_\nu(\varphi)\neq 0$ for some level $n$ character $\nu$
(and that it is of level $n$ if it consists of only level $n$ components).
Obviously $\varphi$ contains level $n$ components if and only if
\[\int_{\varpi^{r}\cO_F^\times}
\varphi(x)\nu(\varpi^{-r}x)\ d^\times x\neq 0\]
for some level $n$ character $\nu$. 

\begin{Lemma}\label{levelshift}
Let $\varphi(x)\in\mathcal{S}(F^\times)$ be any function 
supported only at $\varpi^r\cO_F^\times$. We have
\[\int_{\varpi^r\cO_F^\times}\varphi(x)\psi(bx)\ d^\times x\neq 0\]
only if $\varphi$ has some level $-r-v(b)$ 
(and also level 0 if $v(b)+r\geq -1$) components.
In general, if $\varphi(x)$ is of level $n$, then
$\varphi(x)\psi(bx)$ consists 
\begin{itemize}
	\item of only level $n$ components if $v(b)> -r-\max\{n,1\}$,
	\item of only level $-r-v(b)$ components if $v(b)<-r-\max\{n,1\}$, and 
	\item of all level $\leq \max\{n,1\}$ components if $v(b)=-r-\max\{n,1\}$.
\end{itemize}
\end{Lemma}

\begin{proof}
It is sufficient to show the lemma for 
$\varphi(x)=\chi(\varpi^{-r}x)\1_{\varpi^{r}\cO_F^\times}(x)$
where $\chi$ is any level $n$ character
and $\1_{\varpi^{r}\cO_F^\times}$ is
the characteristic function of $\varpi^{r}\cO_F^\times$.
\cite[Lemma 1.1.1]{schmidt02remarks} shows that,
\begin{equation}\label{addcharint}
\int_{\varpi^m\cO_F^\times}\psi(x)\ dx
=\begin{cases}
q^{-m}\zeta_F(1)^{-1}&\text{if }m\geq 0,\\
-1&\text{if }m=-1,\\
0&\text{if }m\leq -2;
\end{cases}\end{equation}
and for any ramified character $\omega$ of $F^\times$,
\begin{equation}\label{addcharintram}
\int_{\varpi^r\cO_F^\times}
\omega^{-1}(x)\psi(x)|x|^{-s}\ dx
=\begin{cases}
\epsilon(s,\omega,\psi)&\text{if }r=-c(\omega),\\
0&\text{otherwise.}
\end{cases}\end{equation}
Then, for any character $\nu$ of $\cO_F^\times$
(we extend $\chi,\nu$ to be characters on $F^\times$ by
defining $\chi(\varpi)=\nu(\varpi)=1$),
\begin{equation}\label{FourierCoefficients}
\begin{aligned}
&\ \int_{\varpi^r\cO_F^\times}\varphi(x)\psi(bx)\nu(\varpi^{-r}x)\ d^\times x
=\int_{\varpi^r\cO_F^\times}(\chi\nu)(\varpi^{-r}x)\psi(bx)\ d^\times x\\
=&\ \zeta_F(1)\chi\nu(b\varpi^{r})^{-1}|b\varpi^r|^{-1}
\int_{b\varpi^r\cO_F^\times}\chi\nu(x)\psi(x)\ dx\\
=&\ \begin{cases}
1&\text{if }c(\chi\nu)=0,\ v(b)+r\geq 0,\\
-\frac 1{q-1}&\text{if }c(\chi\nu)=0,\ v(b)+r=-1,\\
\zeta_F(1)\chi\nu(b\varpi^{r})^{-1}\epsilon(1,\chi^{-1}\nu^{-1},\psi)
&\text{if }c(\chi\nu)\geq 1,\ v(b)+r=-c(\chi\nu),\\
0&\text{otherwise.}
\end{cases}
\end{aligned}\end{equation}
This completes the proof,
noticing that $c(\chi\nu)\leq\max\{c(\chi),c(\nu)\}$ 
and that inequality holds only if $c(\chi)= c(\nu)$.

\end{proof}

The Bruhat decomposition says that $G=B\cup BwN$, where 
$w=(\begin{smallmatrix}0&-1\\1&0\end{smallmatrix})$
and $B=ZAN$ is the upper triangular Borel subgroup of $G=\GL_2(F)$.
Then the action $\pi(g),g\in G$ in the Kirillov model 
can be expressed purely in terms of $\pi(w)$ and $\pi|_B$. 
We recall a fact that shows the operator $\pi(w)$
on some multiplicative characters.

\begin{Fact}[{\cite{jacquet2006automorphic}, 
\cite[Theorem 37.3]{bushnell2006local} and \cite[Proposition A.1]{hu2016cuspidal}}]
Let $\pi$ a supercuspidal representation with trivial central character.
Assume that it has conductor $\fp^{c(\pi)}$.
Let $\nu$ be a multiplicative character of $\cO_F^\times$ 
with level $c(\nu)$.
The action of $w=(\begin{smallmatrix}0&-1\\1&0\end{smallmatrix})$
in the Kirillov model of $\pi$ satisfies
\begin{equation}\label{KirillovAction}
\pi(w)(\nu(\varpi^{-r}\cdot) \1_{\varpi^r\cO_F^\times})
=C_{\nu}
\nu^{-1}(\varpi^{-r'}\cdot)\1_{\varpi^{r'}\cO_F^\times}
\end{equation}
where $C_{\nu}=\epsilon(\tfrac 12,\pi\otimes\nu^{-1},\psi)$ is independent of $r$
and $r'=-r-\max\{c(\pi),2c(\nu)\}$
(except when the residue field of $F$ is of characteristic $2$ 
and $c(\pi)=2c(\nu)\geq 4$).
In particular
\begin{equation}\label{KirillovAction1}
\pi(w)\1_{\varpi^r\cO_F^\times}
=C_{\1}\1_{\varpi^{-r-c(\pi)}\cO_F^\times},\quad
\text{where }C_{\1}=\epsilon(\tfrac 12,\pi,\psi)=\pm 1.
\end{equation}
\end{Fact}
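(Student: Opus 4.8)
The plan is to deduce both assertions from the local functional equation for the $\GL_2\times\GL_1$ zeta integral attached to the Kirillov model of $\pi$, using crucially that $\pi$ is supercuspidal, so that $L(s,\pi\otimes\chi)=L(1-s,\tilde\pi\otimes\chi^{-1})=1$ for every character $\chi$ of $F^\times$, and that $\omega_\pi$ is trivial. First I would record that, for $\varphi$ in the Kirillov model of $\pi$, the associated Whittaker function $W(g)=(\pi(g)\varphi)(1)$ satisfies $W(a(y)w)=(\pi(w)\varphi)(y)$ by \eqref{KirillovAction.Borel}, so that the ``$w$-part'' of the Rankin--Selberg zeta integral of $W$ is precisely the Mellin transform of $\pi(w)\varphi$. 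Writing $Z(s,\varphi,\chi):=\int_{F^\times}\varphi(y)\chi(y)|y|^{s-1/2}\,d^\times y$ for $\varphi$ in the Kirillov model, the functional equation then collapses (the relevant $L$-factors and $\omega_\pi$ being trivial) to the clean identity
\[Z(1-s,\pi(w)\varphi,\chi^{-1})=\epsilon(s,\pi\otimes\chi,\psi)\,Z(s,\varphi,\chi).\]

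Next I would feed in $\varphi=\varphi_{\nu,r}:=\nu(\varpi^{-r}\cdot)\1_{\varpi^r\cO_F^\times}$. The substitution $y=\varpi^r u$ together with $\vol(\cO_F^\times)=1$ shows that $Z(s,\varphi_{\nu,r},\chi)=\chi(\varpi)^r q^{-r(s-1/2)}$ when $\chi|_{\cO_F^\times}=\nu^{-1}$ and vanishes otherwise; in particular it is a single monomial in $q^{-s}$. Since $\psi$ is unramified, $\epsilon(s,\pi\otimes\chi,\psi)$ is likewise a single monomial in $q^{-s}$, of degree the conductor exponent $a(\pi\otimes\chi)$, which for $\chi|_{\cO_F^\times}=\nu^{-1}$ equals $a(\pi\otimes\nu^{-1})$ regardless of the unramified parameter $\chi(\varpi)$. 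Because $\pi$ is supercuspidal its Kirillov model consists of Schwartz functions on $F^\times$, so $\pi(w)\varphi_{\nu,r}$ is supported on finitely many cosets $\varpi^m\cO_F^\times$ and $Z(1-s,\pi(w)\varphi_{\nu,r},\chi^{-1})$ is a Laurent polynomial in $q^{-s}$ and $\chi(\varpi)$; comparing it with the single monomial on the right-hand side, letting $\chi(\varpi)$ and $q^{-s}$ vary independently, forces $\pi(w)\varphi_{\nu,r}$ to be supported on a single coset $\varpi^{r'}\cO_F^\times$ and to be a constant multiple of the level-$c(\nu)$ character $\nu^{-1}$ there (the coset and the character are both pinned down because $Z(1-s,\pi(w)\varphi_{\nu,r},\chi^{-1})\neq 0$ only when $\chi^{-1}|_{\cO_F^\times}=\nu$). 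Matching exponents then gives $r'=-r-a(\pi\otimes\nu^{-1})$, and by the standard formula for the conductor of a twist of a supercuspidal representation one has $a(\pi\otimes\nu^{-1})=\max\{c(\pi),2c(\nu)\}$ outside the configuration excluded in the statement (residue characteristic $2$ with $c(\pi)=2c(\nu)\geq 4$, where the conductor of the twist can drop). Finally, specializing the displayed identity at $s=\tfrac12$ with the canonical extension $\chi=\nu^{-1}$, $\chi(\varpi)=1$, one gets $Z(\tfrac12,\varphi_{\nu,r},\chi)=1$ and $Z(\tfrac12,\pi(w)\varphi_{\nu,r},\chi^{-1})=C_{\nu}$, whence $C_{\nu}=\epsilon(\tfrac12,\pi\otimes\nu^{-1},\psi)=\epsilon(\tfrac12,\nu^{-1}\pi,\psi)$, manifestly independent of $r$; taking $\nu=\1$ recovers \eqref{KirillovAction1}, with $C_{\1}=\pm1$ coming from the self-duality $\tilde\pi\simeq\pi$.

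The hard part will be the residue characteristic $2$ bookkeeping. Both the identity $a(\pi\otimes\nu^{-1})=\max\{c(\pi),2c(\nu)\}$ and the claim that $\pi(w)\varphi_{\nu,r}$ stays a \emph{pure} character function---rather than acquiring lower-level components spread over several cosets---can genuinely fail when $c(\pi)=2c(\nu)$ is large and $2=0$ in the residue field, which is exactly why that case is set aside; so one has to identify precisely which twists are ``clean''. A secondary point requiring care is tracking the dependence of $\epsilon(s,\pi\otimes\chi,\psi)$ on $\chi(\varpi)$, not merely on $\chi|_{\cO_F^\times}$, so that $r'$ is pinned down without an unramified ambiguity.
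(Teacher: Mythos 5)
The paper never proves this statement: it is imported as a quoted Fact, with the argument delegated to Jacquet--Langlands, Bushnell--Henniart (Theorem 37.3) and the appendix of Hu, so there is no internal proof to compare against. Your functional-equation argument is precisely the standard route underlying those citations: since $\pi$ is supercuspidal all twisted $L$-factors are $1$, the local functional equation in the Kirillov model collapses to $Z(1-s,\pi(w)\varphi,\chi^{-1})=\epsilon(s,\pi\otimes\chi,\psi)Z(s,\varphi,\chi)$, and your monomial matching in $\chi(\varpi)$ and $q^{-s}$ (legitimate because Kirillov vectors of a supercuspidal are compactly supported in $F^\times$ and $\epsilon$ is a monomial for unramified $\psi$, with the unramified-twist dependence $\epsilon(s,\pi\otimes\chi\mu,\psi)=\mu(\varpi)^{a(\pi\otimes\chi)}\epsilon(s,\pi\otimes\chi,\psi)$) does force single-coset support, the character $\nu^{-1}$, the exponent $r'=-r-a(\pi\otimes\nu^{-1})$, and $C_\nu=\epsilon(\tfrac12,\nu^{-1}\pi,\psi)$ independent of $r$; self-duality gives $C_{\1}=\pm1$ in \eqref{KirillovAction1}.

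The one step you should not pass off as ``the standard formula'' is $a(\pi\otimes\nu^{-1})=\max\{c(\pi),2c(\nu)\}$: the standard twisting bound gives equality only when $c(\pi)\neq 2c(\nu)$, and ruling out conductor drop in the boundary case $c(\pi)=2c(\nu)$ is exactly where the hypotheses enter. What saves the odd-residue-characteristic boundary case is the trivial central character together with the dihedral classification: writing $\pi=\pi_\theta$, a drop would force $\theta=(\nu\circ N_{E/F})\theta_1$ with $c(\theta_1)$ small, and restricting to $F^\times$ (where $\nu\circ N$ becomes $\nu^2$ and $\theta|_{F^\times}$ is pinned down by $\omega_\pi=\1$) would force $\nu^2$ to have level strictly below $c(\nu)$, which is impossible unless the residue characteristic is $2$ --- precisely the excluded configuration. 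Supplying that observation (or simply invoking BH 37.3 / Hu's Proposition A.1 for the boundary case, as the paper does) closes the gap; note also that the paper only ever uses the untwisted identity \eqref{KirillovAction1} and low-level situations at $p=2$, where none of this is delicate.
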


Next we recall a lemma about the new vector in a supercuspidal representation $\pi_3$
and the values of its corresponding Whittaker function.

\begin{Lemma}[{\cite[Lemma 5.10]{hu2016cuspidal},
\cite[Corollary 2.18]{hu2017triple}}]
\label{newform}
For a supercuspidal representation $\pi_3$ with trivial central character, 
the new vector in the Kirillov model 
is $\varphi_{\pi_3}=\1_{\cO_F^\times}$,
the characteristic function of $\cO_F^\times$.
Its corresponding Whittaker function $W_{\pi_3}$ satisfies:
\begin{itemize}
\item $W_{\pi_3}(a(y))=\1_{\cO_F^\times}(y)$
for any $y\in F^\times$; 
and therefore $\langle\varphi_{\pi_3},\tilde\varphi_{\pi_3}\rangle
=\langle W_{\pi_3},\widetilde{W}_{\pi_3}\rangle=1$.
\item For $0\leq j<c(\pi_3)$, 
$W_{\pi_3}\left(a(y)(\begin{smallmatrix}1&0\\\varpi^j&1\end{smallmatrix})\right)$
is supported only at $v(y) =\min\{0,2j-c(\pi_3)\}$,
and it consists of only level $c(\pi_3)-j$ 
(and also level $0$ if $j=c(\pi_3)-1$) components;
the exception happens when the residue field of $F$ is of characteristic $2$ 
(the central character is assumed to be trivial in this paper),
$c(\pi_3)\geq 4$ is an even number and $j = c(\pi_3)/2$,
in which case
$W_{\pi_3}\left(a(y)(
\begin{smallmatrix}1&0\\\varpi^{c(\pi_3)/2}&1\end{smallmatrix})\right)$ 
is supported at $v(y)\geq 0$, consisting of level $c(\pi_3)/2$ components.
\item Moreover we have
\[\int_{v(y) =\min\{0,2j-c(\pi_3)\}}
W_{\pi_3}\left(a(y)\begin{pmatrix}1&0\\\varpi^j&1\end{pmatrix}\right)
\ d^\times y=\begin{cases}
1&\text{if }j\geq c(\pi_3),\\
-\frac 1{q-1}&\text{if }j= c(\pi_3)-1,\\
0&\text{otherwise};
\end{cases}\]
\[\int_{v(y) =\min\{0,2j-c(\pi_3)\}}
W_{\pi_3}\left(a(y)\begin{pmatrix}1&0\\\varpi^j&1\end{pmatrix}\right)
\psi(-\varpi^{-j}y)\ d^\times y=\begin{cases}
C_{\1}&\text{if }j=0,\\
-\frac 1{q-1}C_{\1}&\text{if }j= 1,\\
0&\text{otherwise},
\end{cases}\]
where $C_{\1}=\epsilon(\tfrac 12,\pi_3,\psi)=\pm 1$.
\end{itemize}
\end{Lemma}

Next we generalize the above lemma.

\begin{Lemma}\label{sc.new.int}
With assumptions and notations in the above lemma,
\begin{itemize}
\item for $j\geq c(\pi_3)$,
\[\int_{\cO_F^\times}
W_{\pi_3}\left(a(y)\begin{pmatrix}1&0\\\varpi^j&1\end{pmatrix}\right)
\psi(by)\ d^\times y
=\begin{cases}
1&\text{if }b\in\cO_F,\\
-\frac 1{q-1}&\text{if }b\in\varpi^{-1}\cO_F^\times,\\
0&\text{otherwise;}
\end{cases}\]
\item for $0\leq j<c(\pi_3)$, in general, 
\[\int_{v(y) = \min\{0, 2j-c(\pi_3)\}}
W_{\pi_3}\left(a(y)\begin{pmatrix}1&0\\\varpi^j&1\end{pmatrix}\right)
\psi(by)\ d^\times y\]
vanishes unless $v(b)=-\min\{j,c(\pi_3)-j\}$
(or $b\in\varpi^{-1}\cO_F$ when $j=c(\pi_3)-1$);
in the exceptional case when the residue field of $F$ is of characteristic $2$,
$c(\pi_3)\geq 4$ is even and $j = c(\pi_3)/2$, the integral
$\int_{v(y) = r}
W_{\pi_3}\left(a(y)(\begin{smallmatrix}1&0\\\varpi^j&1\end{smallmatrix})\right)
\psi(by)\ d^\times y$
vanishes unless $r\geq 0$ and $v(b)=-r-c(\pi_3)/2$;
\item in particular for $j=0$,
\[\int_{\varpi^{-c(\pi_3)}\cO_F^\times}
W_{\pi_3}\left(a(y)\begin{pmatrix}1&0\\1&1\end{pmatrix}\right)\psi(by)
\ d^\times y
=\begin{cases}
C_{\1}&\text{if }b\in -1+\varpi^{c(\pi_3)}\cO_F,\\
-\frac {C_{\1}}{q-1}&\text{if }b\in -1+\varpi^{c(\pi_3)-1}\cO_F^\times,\\
0&\text{otherwise,}
\end{cases}\]
where $C_{\1}=\epsilon(\tfrac 12,\pi_3,\psi)=\pm 1$.
\end{itemize}
\end{Lemma}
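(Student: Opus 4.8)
The plan is to exploit the Kirillov-model identity $W_{\pi_3}\bigl(a(y)(\begin{smallmatrix}1&0\\\varpi^j&1\end{smallmatrix})\bigr)=\bigl(\pi_3(\begin{smallmatrix}1&0\\\varpi^j&1\end{smallmatrix})\varphi_{\pi_3}\bigr)(y)$ with new vector $\varphi_{\pi_3}=\1_{\cO_F^\times}$, and to treat the ranges $j\ge c(\pi_3)$, $0\le j<c(\pi_3)$ (vanishing), and $j=0$ (exact value) in turn. For $j\ge c(\pi_3)$ the matrix $(\begin{smallmatrix}1&0\\\varpi^j&1\end{smallmatrix})$ lies in $K_1(\fp^{c(\pi_3)})$ and therefore fixes the new vector, so the Whittaker function is simply $\1_{\cO_F^\times}(y)$ and the integral collapses to $\int_{\cO_F^\times}\psi(by)\,d^\times y$; rescaling $y\mapsto by$ and applying \eqref{addcharint} gives $1$, $-1/(q-1)$, or $0$ according as $b\in\cO_F$, $b\in\varpi^{-1}\cO_F^\times$, or $v(b)\le-2$.

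For $0\le j<c(\pi_3)$ I would feed in Lemma \ref{newform}: the function $\varphi^{(j)}(y):=W_{\pi_3}\bigl(a(y)(\begin{smallmatrix}1&0\\\varpi^j&1\end{smallmatrix})\bigr)$ is supported on $\varpi^{r_0}\cO_F^\times$ with $r_0=\min\{0,2j-c(\pi_3)\}$ and consists only of level $c(\pi_3)-j$ components, together with a level $0$ component exactly when $j=c(\pi_3)-1$. The corollary of Lemma \ref{levelshift} then says that $\int\varphi^{(j)}(y)\psi(by)\,d^\times y$ is nonzero only if $\varphi^{(j)}$ has a nonvanishing level $-r_0-v(b)$ component, or — when $v(b)+r_0\ge-1$ — a nonvanishing level $0$ component. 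Matching $-r_0-v(b)=c(\pi_3)-j$ forces $v(b)=-\min\{j,c(\pi_3)-j\}$ (a short case split on the sign of $2j-c(\pi_3)$ identifies $-r_0-(c(\pi_3)-j)$ with $-\min\{j,c(\pi_3)-j\}$ in both cases), while the level-$0$ alternative is available only at $j=c(\pi_3)-1$, where $r_0=0$ and it enlarges the admissible set to $v(b)\ge-1$, i.e.\ $b\in\varpi^{-1}\cO_F$. This is exactly the claimed vanishing.

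For $j=0$ I would pin down $\varphi^{(0)}$ exactly. Using the Bruhat decomposition $(\begin{smallmatrix}1&0\\1&1\end{smallmatrix})=n(1)\,w\,n(1)$ and the fact that $n(1)$ fixes the new vector, one gets $\varphi^{(0)}=\pi_3(n(1))\pi_3(w)\varphi_{\pi_3}$; by \eqref{KirillovAction1} this is $C_{\1}\,\pi_3(n(1))\1_{\varpi^{-c(\pi_3)}\cO_F^\times}$, and by \eqref{KirillovAction.Borel} the action of $n(1)$ multiplies by $\psi(y)$, so $\varphi^{(0)}(y)=C_{\1}\,\psi(y)\,\1_{\varpi^{-c(\pi_3)}\cO_F^\times}(y)$ (consistent with the support and level data of Lemma \ref{newform}). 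Hence $\int_{\varpi^{-c(\pi_3)}\cO_F^\times}\varphi^{(0)}(y)\psi(by)\,d^\times y=C_{\1}\int_{\varpi^{-c(\pi_3)}\cO_F^\times}\psi((1+b)y)\,d^\times y$; substituting $y=\varpi^{-c(\pi_3)}x$ with $x\in\cO_F^\times$ and invoking \eqref{addcharint} a last time yields $C_{\1}$ when $v(1+b)\ge c(\pi_3)$, $-C_{\1}/(q-1)$ when $v(1+b)=c(\pi_3)-1$, and $0$ otherwise, which is precisely the stated formula.

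All the computations are short; the only step demanding care is the level-matching argument for the vanishing range, and in particular the boundary case $j=c(\pi_3)-1$, where the extra level-$0$ component recorded in Lemma \ref{newform} is exactly what turns the single admissible valuation into the whole fractional ideal $\varpi^{-1}\cO_F$. I would also double-check there that the exceptional situations excluded in Lemma \ref{newform} (the characteristic-$2$, $c(\pi_3)=2c(\nu)\ge4$ case of the Fact, and the exceptional case of \cite[Corollary 2.18]{hu2017triple}) remain excluded here, which is automatic since our $\pi_3$ has $c(\pi_3)\in\{2,3\}$.
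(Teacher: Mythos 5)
Your proposal is correct and takes essentially the same route as the paper: $K_1(\fp^{c(\pi_3)})$-invariance plus \eqref{addcharint} for $j\ge c(\pi_3)$, the support/level data of Lemma \ref{newform} combined with Lemma \ref{levelshift} for the vanishing range (including the level-$0$ enlargement at $j=c(\pi_3)-1$), and for $j=0$ the decomposition of $\left(\begin{smallmatrix}1&0\\1&1\end{smallmatrix}\right)$ through $w$ together with \eqref{KirillovAction1} and \eqref{KirillovAction.Borel} — the paper merely folds $\psi(by)$ into the single matrix $\left(\begin{smallmatrix}1+b&b\\1&1\end{smallmatrix}\right)$ before decomposing, which is the same computation. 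The only cosmetic point is that the rescaling $y\mapsto by$ in the first case presupposes $b\neq 0$; for $b=0$ the integral is trivially $\vol(\cO_F^\times)=1$, so nothing is lost.
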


\begin{proof}
When $b=0$ or $b=-\varpi^{-j}$ the lemma is 
\cite[Lemma 5.10(2)(3)]{hu2016cuspidal}.

Recall that 
\[W_{\pi}\left(a(y)\begin{pmatrix}1&0\\\varpi^j&1\end{pmatrix}\right)
=\left(\pi\begin{pmatrix}1&0\\\varpi^j&1\end{pmatrix}\varphi_\pi\right)(y).\]
When $j\geq c(\pi_3)$, 
$W_{\pi_3}\left(a(y)(\begin{smallmatrix}1&0\\
\varpi^j&1\end{smallmatrix})\right)=\1_{\cO_F^\times}(y)$ 
is simply the new vector. 
The integral is equal to
\begin{equation}\label{Lemma1.1.1reformulation}
\int_{\cO_F^\times}\psi(by)\ d^\times y
=\zeta_F(1)|b|^{-1}\int_{b\cO_F^\times}\psi(y)\ dy
=\begin{cases}
1&\text{if }v(b)\geq 0,\\
-\frac 1{q-1}&\text{if }v(b)=-1,\\
0&\text{if }v(b)\leq -2
\end{cases}\end{equation}
(see (\ref{addcharint}) for the last step).

When $0\leq j<c(\pi_3)$,
Lemma \ref{newform} shows that, in the general case, 
$W_{\pi_3}\left(a(y)(\begin{smallmatrix}1&0\\
\varpi^j&1\end{smallmatrix})\right)$ is supported 
only at $v(y) =r'=\min\{0,2j-c(\pi_3)\}$,
consists of only level $c(\pi_3)-j$ 
(and also level 0 if $j=c(\pi_3)-1$) components.
Then we apply Lemma \ref{levelshift} and see that,
the integral we study vanishes
unless $v(b)=-\min\{j,c(\pi_3)-j\}$
(or $v(b)\geq -1$ when $j\geq c(\pi_3)-1$).
One can study the exceptional case using the same argument.

Recall that for any $\varphi\in\mathcal{S}(F^\times)$ 
in the Kirillov model of $\pi$ we have by \eqref{KirillovAction.Borel} that
\[\left(\pi\begin{pmatrix}1&b\\0&1\end{pmatrix}\varphi\right)(x)
=\psi(bx)\varphi(x).\]
Then we can write 
\[\begin{split}
W_{\pi_3}\left(a(y)\begin{pmatrix}1&0\\\varpi^j&1\end{pmatrix}\right)\psi(by)
&=\left(\pi\begin{pmatrix}1&b\\0&1\end{pmatrix}
\left(\pi\begin{pmatrix}1&0\\\varpi^j&1\end{pmatrix}\1_{\cO_F^\times}\right)
\right)(y)\\
&=\left(\pi\begin{pmatrix}1+b\varpi^j&b\\\varpi^j&1\end{pmatrix}\1_{\cO_F^\times}
\right)(y).
\end{split}\]
In particular when $j=0$ (and $v(b)=0$) we can decompose the matrix as
\[\begin{pmatrix}1+b&b\\1&1\end{pmatrix}
=\begin{pmatrix}1&1+b\\0&1\end{pmatrix}w
\begin{pmatrix}1&1\\0&1\end{pmatrix}.\]
Recall that \eqref{KirillovAction} gives 
the action of $w=(\begin{smallmatrix}0&-1\\1&0\end{smallmatrix})$
in the Kirillov model. Then one can show that
\[\begin{split}
&\ W_{\pi_3}\left(a(y)\begin{pmatrix}1&0\\1&1\end{pmatrix}\right)\psi(by)
=\left(\pi_3(\begin{pmatrix}1&1+b\\0&1\end{pmatrix}w
\begin{pmatrix}1&1\\0&1\end{pmatrix})\1_{\cO_F^\times}
\right)(y)\\
=&\ \left(\pi\begin{pmatrix}1&1+b\\0&1\end{pmatrix}
(\pi_3(w)\1_{\cO_F^\times})\right)(y)
=\left(\pi\begin{pmatrix}1&1+b\\0&1\end{pmatrix}
(C_{\1}\1_{\varpi^{-c(\pi_3)}\cO_F^\times})\right)(y)\\
=&\ C_{\1}\psi((1+b)y)\1_{\varpi^{-c(\pi_3)}\cO_F^\times}(y).
\end{split}\]
By (\ref{Lemma1.1.1reformulation}) we have 
\begin{multline*}
\int_{v(y)=-c(\pi_3)}
W_{\pi_3}\left(a(y)\begin{pmatrix}1&0\\1&1\end{pmatrix}\right)\psi(by)
\ d^\times y\\
=C_{\1}\int_{\varpi^{-c(\pi_3)}\cO_F^\times}\psi((1+b)y)\ d^\times y
=\begin{cases}
C_{\1}&\text{if }v(1+b)\geq c(\pi_3),\\
-\frac {C_{\1}}{q-1}&\text{if }v(1+b)=c(\pi_3)-1,\\
0&\text{if }v(1+b)\leq c(\pi_3)-2.
\end{cases}
\end{multline*}
\end{proof}

To study the oldforms we need the values of
$\left(\pi_3(\begin{smallmatrix}\varpi^{-l}&0\\0&1\end{smallmatrix})
W_{\pi_3}\right)
\left(a(y)(\begin{smallmatrix}1&0\\\varpi^j&1\end{smallmatrix})\right)$
for $1\leq l\leq c(\chi_D)-c(\pi_3)$.

\begin{Lemma}\label{oldform.Whittaker.supercuspidal}
With assumptions and notations in the above lemma, for an integer $l\geq 0$,
\begin{itemize}
\item if $j\geq c(\pi_3)+l$, then 
\[\left(\pi_3\begin{pmatrix}\varpi^{-l}&0\\0&1\end{pmatrix}
W_{\pi_3}\right)
\left(a(y)\begin{pmatrix}1&0\\\varpi^{j}&1\end{pmatrix}\right)
=\1_{\varpi^{l}\cO_F^\times}(y)\]
and 
\[\int_{v(y)=l}
\left(\pi_3\begin{pmatrix}\varpi^{-l}&0\\0&1\end{pmatrix}
W_{\pi_3}\right)
\left(a(y)\begin{pmatrix}1&0\\\varpi^{j}&1\end{pmatrix}\right)
\psi(by)\ d^\times y
=\begin{cases}
1&\text{if }b\in \varpi^{-l}\cO_F,\\
-\frac 1{q-1}&\text{if }b\in \varpi^{-l-1}\cO_F^\times,\\
0&\text{otherwise.}
\end{cases}\]
\item if $0\leq j\leq l$,
$\left(\pi_3(\begin{smallmatrix}\varpi^{-l}&0\\0&1\end{smallmatrix})
W_{\pi_3}\right)
\left(a(y)(\begin{smallmatrix}1&0\\\varpi^j&1\end{smallmatrix})\right)$
is supported only at $v(y)=2j-l-c(\pi_3)$,
and
\[\int_{v(y)=2j-l-c(\pi_3)}
\left(\pi_3\begin{pmatrix}\varpi^{-l}&0\\0&1\end{pmatrix}
W_{\pi_3}\right)
\left(a(y)\begin{pmatrix}1&0\\\varpi^{j}&1\end{pmatrix}\right)
\psi(by)\ d^\times y\]
is equal to 
\[\begin{cases}
C_{\1}&\text{if }b\in -\varpi^{-j}+\varpi^{c(\pi_3)+l-2j}\cO_F,\\
-\frac {C_{\1}}{q-1}&
\text{if }b\in -\varpi^{-j}+\varpi^{c(\pi_3)+l-2j-1}\cO_F^\times,\\
0&\text{otherwise}
\end{cases}\]
where $C_{\1}=\epsilon(\tfrac 12,\pi_3,\psi)=\pm 1$.
\end{itemize}
\end{Lemma}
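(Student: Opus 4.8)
The plan is as follows. Writing $g':=(\begin{smallmatrix}1&0\\\varpi^j&1\end{smallmatrix})(\begin{smallmatrix}\varpi^{-l}&0\\0&1\end{smallmatrix})=(\begin{smallmatrix}\varpi^{-l}&0\\\varpi^{j-l}&1\end{smallmatrix})$, the quantity to compute is
\[\left(\pi_3\begin{pmatrix}\varpi^{-l}&0\\0&1\end{pmatrix}W_{\pi_3}\right)\left(a(y)\begin{pmatrix}1&0\\\varpi^j&1\end{pmatrix}\right)=W_{\pi_3}\bigl(a(y)g'\bigr)=\bigl(\pi_3(g')\varphi_{\pi_3}\bigr)(y),\]
where $\varphi_{\pi_3}=\1_{\cO_F^\times}$ is the new vector in the Kirillov model (Lemma \ref{newform}). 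I would decompose $g'$ suitably in each of the two regimes, apply $\pi_3$ factor by factor using the Borel action \eqref{KirillovAction.Borel} and, in the second regime, the Weyl-element identity \eqref{KirillovAction1}, and then evaluate the resulting integral against $\psi(by)$ via \eqref{addcharint} (equivalently \eqref{Lemma1.1.1reformulation}) after a rescaling of $y$.

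For $j\geq c(\pi_3)+l$ I would use the factorization $g'=(\begin{smallmatrix}\varpi^{-l}&0\\0&1\end{smallmatrix})(\begin{smallmatrix}1&0\\\varpi^{j-l}&1\end{smallmatrix})$. Since $j-l\geq c(\pi_3)$, the second factor lies in $K_1(\fp^{c(\pi_3)})$, so $\pi_3(\begin{smallmatrix}1&0\\\varpi^{j-l}&1\end{smallmatrix})\varphi_{\pi_3}=\varphi_{\pi_3}$, and by \eqref{KirillovAction.Borel} the value is $(\pi_3(\begin{smallmatrix}\varpi^{-l}&0\\0&1\end{smallmatrix})\varphi_{\pi_3})(y)=\varphi_{\pi_3}(\varpi^{-l}y)=\1_{\varpi^l\cO_F^\times}(y)$. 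The integral $\int_{\varpi^l\cO_F^\times}\psi(by)\,d^\times y$ is then, after the substitution $y\mapsto\varpi^l y$, the integral of \eqref{Lemma1.1.1reformulation} with $b$ replaced by $b\varpi^l$; it equals $1$, $-\tfrac1{q-1}$, or $0$ according as $v(b)\geq-l$, $v(b)=-l-1$, or $v(b)\leq-l-2$, which is the claimed answer.

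For $0\leq j\leq l$ I would use the Bruhat-type identity
\[\begin{pmatrix}\varpi^{-l}&0\\\varpi^{j-l}&1\end{pmatrix}=n(\varpi^{-j})\begin{pmatrix}\varpi^{-j}&0\\0&\varpi^{j-l}\end{pmatrix}w\,n(\varpi^{l-j}),\]
verified by direct multiplication. Applying $\pi_3$ to $\varphi_{\pi_3}=\1_{\cO_F^\times}$ from the right: since $l-j\geq0$ and $\psi$ is unramified, $\pi_3(n(\varpi^{l-j}))\1_{\cO_F^\times}=\1_{\cO_F^\times}$; then \eqref{KirillovAction1} gives $\pi_3(w)\1_{\cO_F^\times}=C_{\1}\1_{\varpi^{-c(\pi_3)}\cO_F^\times}$; then the diagonal factor (whose scalar part acts trivially since $\omega_{\pi_3}=\1$) rescales via \eqref{KirillovAction.Borel} to $C_{\1}\1_{\varpi^{2j-l-c(\pi_3)}\cO_F^\times}$; and finally $\pi_3(n(\varpi^{-j}))$ multiplies by $\psi(\varpi^{-j}y)$. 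Thus the Whittaker value equals $C_{\1}\psi(\varpi^{-j}y)\1_{\varpi^{2j-l-c(\pi_3)}\cO_F^\times}(y)$, so it is supported exactly at $v(y)=2j-l-c(\pi_3)$, and
\[\int_{v(y)=2j-l-c(\pi_3)}C_{\1}\psi(\varpi^{-j}y)\psi(by)\,d^\times y=C_{\1}\int_{\varpi^{2j-l-c(\pi_3)}\cO_F^\times}\psi\bigl((\varpi^{-j}+b)y\bigr)\,d^\times y.\]
Rescaling $y$ and applying \eqref{addcharint} reduces this to the position of $v(\varpi^{-j}+b)$ relative to $c(\pi_3)+l-2j$: it equals $C_{\1}$ for $b\in-\varpi^{-j}+\varpi^{c(\pi_3)+l-2j}\cO_F$, equals $-\tfrac{C_{\1}}{q-1}$ for $b\in-\varpi^{-j}+\varpi^{c(\pi_3)+l-2j-1}\cO_F^\times$, and vanishes otherwise; for $j=0$ this recovers the last formula of Lemma \ref{sc.new.int}.

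Everything here is elementary; the one point that must be kept in view is that in the Bruhat decomposition of the second regime the Weyl element $w$ occurs only once and is applied to the \emph{level-$0$} vector $\1_{\cO_F^\times}$ (this is exactly where $j\leq l$ is used, since it makes $\varpi^{l-j}$ integral), so that \eqref{KirillovAction1} suffices and the exceptional case of the Fact in Section \ref{section.supercuspidal} --- which would require $c(\pi_3)=2c(\nu)\geq4$ --- never arises. The only real bookkeeping is tracking the exponents through the diagonal rescaling, and these are forced once the displayed decomposition is fixed.
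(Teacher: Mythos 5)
Your proof is correct. The first case ($j\geq c(\pi_3)+l$) is exactly the paper's argument: commute the two matrices, use $K_1(\fp^{c(\pi_3)})$-invariance of the new vector, and evaluate with \eqref{addcharint}. In the second case your route differs only mildly from the paper's: you decompose $g'=(\begin{smallmatrix}\varpi^{-l}&0\\\varpi^{j-l}&1\end{smallmatrix})$ in one step through the Weyl element, as $n(\varpi^{-j})\operatorname{diag}(\varpi^{-j},\varpi^{j-l})\,w\,n(\varpi^{l-j})$, and apply \eqref{KirillovAction1} directly to get the closed form $C_{\1}\psi(\varpi^{-j}y)\1_{\varpi^{2j-l-c(\pi_3)}\cO_F^\times}(y)$, whereas the paper factors the same matrix through $(\begin{smallmatrix}1&0\\1&1\end{smallmatrix})$ and then quotes Lemma \ref{sc.new.int}, whose own proof is precisely your Bruhat computation in the case $l=j=0$. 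The two arguments are the same in substance --- both hinge on a single application of $\pi_3(w)$ to the level-zero vector $\1_{\cO_F^\times}$, so, as you correctly observe, the exceptional case of the Fact never intervenes --- and your version has the small advantage of exhibiting the support and the phase $\psi(\varpi^{-j}y)$ simultaneously, after which the evaluation of the integral against $\psi(by)$ via \eqref{addcharint} (and the match with the stated ranges of $b$) is immediate.
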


\begin{proof}
When $j> l$ we have 
\[\begin{pmatrix}1&0\\\varpi^j&1\end{pmatrix}
\begin{pmatrix}\varpi^{-l}&0\\0&1\end{pmatrix}
=\begin{pmatrix}\varpi^{-l}&0\\\varpi^{j-l}&1\end{pmatrix}
=\begin{pmatrix}\varpi^{-l}&0\\0&1\end{pmatrix}
\begin{pmatrix}1&0\\\varpi^{j-l}&1\end{pmatrix}.\]
If $j-l\geq c(\pi_3)$ we have
\[\left(\pi_3\begin{pmatrix}\varpi^{-l}&0\\0&1\end{pmatrix}
W_{\pi_3}\right)
\left(a(y)\begin{pmatrix}1&0\\\varpi^{j}&1\end{pmatrix}\right)
=\pi_3\begin{pmatrix}\varpi^{-l}&0\\0&1\end{pmatrix}
\1_{\cO_F^\times}(y)
=\1_{\varpi^{l}\cO_F^\times}(y)\]
and 
\begin{multline*}
\int_{v(y)=l}
\left(\pi_3\begin{pmatrix}\varpi^{-l}&0\\0&1\end{pmatrix}
W_{\pi_3}\right)
\left(a(y)\begin{pmatrix}1&0\\\varpi^{j}&1\end{pmatrix}\right)
\psi(by)\ d^\times y\\
=\int_{v(y)=l}\psi(by)\ d^\times y
=\begin{cases}
1&\text{if }v(b)\geq -l,\\
-\frac 1{q-1}&\text{if }v(b)=-l-1,\\
0&\text{if }v(b)\leq -l-2.
\end{cases}
\end{multline*}

When $j\leq l$,
by \eqref{eqn.Iwasawa} we have the Iwasawa decomposition
\[\begin{pmatrix}1&0\\\varpi^j&1\end{pmatrix}
\begin{pmatrix}\varpi^{-l}&0\\0&1\end{pmatrix}
=\begin{pmatrix}\varpi^{-j}&\varpi^{-l}-\varpi^{-j}\\
0&\varpi^{j-l}\end{pmatrix}
\begin{pmatrix}1&0\\1&1\end{pmatrix}
\begin{pmatrix}1&\varpi^{l-j}-1\\0&1\end{pmatrix}.\]
And by \eqref{KirillovAction.Borel} we have 
\[\begin{split}
&\ \left(\pi_3\begin{pmatrix}\varpi^{-l}&0\\0&1\end{pmatrix}
W_{\pi_3}\right)
\left(a(y)\begin{pmatrix}1&0\\\varpi^{j}&1\end{pmatrix}\right)\\
=&\ \pi_3(\begin{pmatrix}\varpi^{-j}&\varpi^{-l}-\varpi^{-j}\\
0&\varpi^{j-l}\end{pmatrix}
\begin{pmatrix}1&0\\1&1\end{pmatrix}
\begin{pmatrix}1&\varpi^{l-j}-1\\0&1\end{pmatrix})\1_{\cO_F^\times}(y)\\
=&\ \pi_3\begin{pmatrix}\varpi^{-j}&\varpi^{-l}-\varpi^{-j}\\
0&\varpi^{j-l}\end{pmatrix}
(\pi_3\begin{pmatrix}1&0\\1&1\end{pmatrix}\1_{\cO_F^\times})(y)\\
=&\ \psi((\varpi^{-l}-\varpi^{-j})\varpi^{l-j}y)
(\pi_3\begin{pmatrix}1&0\\1&1\end{pmatrix}\1_{\cO_F^\times})(\varpi^{l-2j}y)\\
=&\ W_{\pi_3}\left(a(\varpi^{l-2j}y)
\begin{pmatrix}1&0\\1&1\end{pmatrix}\right)
\psi((\varpi^{-j}-\varpi^{l-2j})y).
\end{split}\]
By Lemma \ref{newform}
it is supported only at $v(y)=2j-l-c(\pi_3)$,
and 
\[\begin{split}
&\ \int_{v(y)=2j-l-c(\pi_3)}
\left(\pi_3\begin{pmatrix}\varpi^{-l}&0\\0&1\end{pmatrix}
W_{\pi_3}\right)
\left(a(y)\begin{pmatrix}1&0\\\varpi^{j}&1\end{pmatrix}\right)
\psi(by)\ d^\times y\\
=&\ \int_{v(y)=2j-l-c(\pi_3)}
W_{\pi_3}\left(a(\varpi^{l-2j}y)
\begin{pmatrix}1&0\\1&1\end{pmatrix}\right)
\psi((\varpi^{-j}-\varpi^{l-2j}+b)y)\ d^\times y\\
=&\ \int_{v(y')=-c(\pi_3)}
W_{\pi_3}\left(a(y')
\begin{pmatrix}1&0\\1&1\end{pmatrix}\right)
\psi((\varpi^{j-l}-1+b\varpi^{2j-l})y')\ d^\times y'\\
=&\ \begin{cases}
C_{\1}&\text{if }b\in -\varpi^{-j}+\varpi^{c(\pi_3)+l-2j}\cO_F,\\
-\frac {C_{\1}}{q-1}&\text{if }b\in -\varpi^{-j}+\varpi^{c(\pi_3)+l-2j-1}\cO_F^\times,\\
0&\text{otherwise}
\end{cases}
\end{split}\]
by Lemma \ref{sc.new.int}.
\end{proof}


\subsection{The adjoint lift of a supercuspidal representation}
\label{L(Ad)}

A supercuspidal representation $\pi$ of $G=\GL_2(F)$
is called unramified 
(i.e. of Type-1 as we have defined in Section \ref{section.globalcalculation}) 
if $\pi\simeq\pi\otimes\eta$
for some unramified character $\eta\neq\1$ of $F^\times$.
(By comparing central characters one can see that $\eta$ is quadratic.)
Let $\eta$ be the (nontrivial) unramified quadratic character of $F^\times$.
\cite[Corollary 1.3]{gelbart1978relation} gives the $L$-factor of 
the adjoint lift of a supercuspidal representation:
\[L(s,\pi,\Ad)=\begin{cases}
1&\text{if }\pi\not\simeq\pi\otimes\eta,\\
(1+q^{-s})^{-1}&\text{if }\pi\simeq\pi\otimes\eta.\end{cases}\]

In the case when the residue field of $F$ has characteristic $p=2$,
the ``unramification'' of a supercuspidal representation $\pi$ 
is actually equivalent to its ``dihedralness''.
Recall that $\pi$ is called dihedral 
(cf. \cite[Theorem 4.8.6]{bump1998automorphic}) 
if it is associated with 
a quadratic field extension $E/F$ and a character of $E^\times$
that is not trivial on the kernel of the norm map $N_{E/F}$ 
from $E^\times$ to $F^\times$.
(One can also find the construction in \cite[Section 19]{bushnell2006local}.)
The Tame Parametrization Theorem (cf. \cite[Section 20.1]{bushnell2006local})
says that every supercuspidal representation $\pi$ of $\GL_2(F)$ is dihedral 
if the residue characteristic of $F$ is an odd prime;
but when the residue characteristic is $p=2$, only the unramified ones
have such correspondence:
$\pi$ is supercuspidal and unramified 
if and only if it is ``unramified'' dihedral,
i.e. it is associated with 
an unramified quadratic field extension $E/F$.
(The equivalence of ``unramified supercuspidal'' 
and ``unramified dihedral'' is also true when $p\neq 2$, 
cf. \cite[Section 20.3]{bushnell2006local}.)
This explains the assumption of the Maass form $f$ in Theorem \ref{WatsonIchinoCM}.

\subsection{Local constants in the Watson--Ichino formula}
\label{section.localconst.calculation}

To apply Lemma \ref{Lemma.LocalConstCalculation} to the calculation 
of the local Rankin--Selberg integral $\lRS$, 
we need the following result.

\begin{Lemma}\label{Lemma.BKinvariance}
Fix an unramified additive character $\psi$ of $F$. 
Let $\pi_1=\omega_1\boxplus\omega_2$, 
$\pi_2=\omega_2^{-1}\boxplus\omega_1^{-1}$
be principal series representations of $G=\GL_2(F)$
with $\omega_1,\omega_2$ both unitary,
$c(\omega_1)=m>0$ and $c(\omega_2)=0$.
Let $\pi_3$ be a generic representation of $G$ with trivial central character
and $c(\pi_3)\leq m$.
Then, for $\varphi_1$ in the induced model of $\pi_1$, 
$W_2\in\mathcal{W}(\pi_2,\bar\psi)$ and
$W_3\in\mathcal{W}(\pi_3,\psi)$, 
the function $\Theta:K\to \CC$ defined by
\[\Theta(k):=\int_{F^\times}
\varphi_1(a(y)k)W_2(a(y)k)W_3(a(y)k)
\ \frac{d^\times y}{|y|}\]
is left $(B\cap K)$-invariant and right $K_1(\fp^m)$-invariant.
\end{Lemma}

\begin{proof}
Any $b\in B\cap K$ can be decomposed as $b=z(t)a(y')n(x)$ with $t,y'\in\cO_F^\times$ and $x\in\cO_F$,
and we have that
\[a(y)bk=\begin{pmatrix}y&\\&1\end{pmatrix}\begin{pmatrix}t&\\&t\end{pmatrix}
\begin{pmatrix}y'&\\&1\end{pmatrix}\begin{pmatrix}1&x\\&1\end{pmatrix}k\\
=\begin{pmatrix}t&\\&t\end{pmatrix}\begin{pmatrix}yy'&yy'x\\&1\end{pmatrix}k=z(t)n(yy'x)a(yy')k.\]
Recall that, the action of $z(t)=(\begin{smallmatrix}t&0\\0&t\end{smallmatrix})$ is given by the central character:
\[\varphi_1(z(t)g)=\omega_1\omega_2(t)\varphi_1(g),\quad W_2(z(t)g)=\omega_2^{-1}\omega_1^{-1}(t)W_2(g),\quad W_3(z(t)g)=W_3(g);\]
and the action of $n(x)=(\begin{smallmatrix}1&x\\0&1\end{smallmatrix})$ 
is given by proposition of induced model or Whittaker model respectively:
\[\varphi_1(n(x)g)=\varphi_1(g),\quad W_2(n(x)g)=\bar\psi(x)W_2(g),\quad W_3(n(x)g)=\psi(x)W_3(g).\]
Therefore
\[\begin{aligned}
\varphi_1\big(a(y)bk\big)&=\omega_1\omega_2(t)&&\varphi_1(a(yy')k),\\
W_2\big(a(y)bk\big)&=\omega_2^{-1}\omega_1^{-1}(t)&\!\!\!\!\!\bar\psi(yy'x)&W_2(a(yy')k),\\
W_3\big(a(y)bk\big)&=&\!\!\!\!\!\psi(yy'x)&W_3(a(yy')k);
\end{aligned}\]
and hence 
\[\Theta(bk)=\int_{F^\times}
\varphi_1(a(yy')k)W_2(a(yy')k)W_3(a(yy')k)
\ \frac{d^\times y}{|y|}
=|y'|\Theta(k)=\Theta(k)\]
for any $b\in B\cap K$, with $|y'|=1$ since $y'\in\cO_F^\times$.

At last the assumptions on the conductors of these three representations
imply the right $K_1(\fp^m)$-invariance of $\varphi_1,W_2,W_3$, and thus of $\Theta(k)$.
\end{proof}

The above lemma still holds if $\pi_i,$ $i=1,2,3$, are all generic with level $c(\pi_i)\leq m$,
and with central character $\omega_{\pi_i}$
such that $\omega_{\pi_1}\omega_{\pi_2}\omega_{\pi_3}=\1$.

By the definition of $\lRS$ together with Lemma \ref{Lemma.LocalConstCalculation} and Lemma \ref{Lemma.BKinvariance}, 
$\lRS(\varphi_{\pi_1}, W_{\pi_2}, W_{\pi_3})$ is equal to
\[\zeta_F(1)^{1/2}\sum_{j=0}^m A_j
\int_{F^\times}
\varphi_{\pi_1}\left(a(y)\begin{pmatrix}1&0\\\varpi^j&1\end{pmatrix}\right)
W_{\pi_2}\left(a(y)\begin{pmatrix}1&0\\\varpi^j&1\end{pmatrix}\right)
W_{\pi_3}\left(a(y)\begin{pmatrix}1&0\\\varpi^j&1\end{pmatrix}\right)
\ \frac{d^\times y}{|y|}.\]
Recall that, by Lemma \ref{Lemma.newform1}, the new vector in the induced model of 
$\pi_1=\omega_1\boxplus\omega_2$,
where $c(\omega_1)=m>c(\omega_2)=0$, 
satisfies
\[\varphi_{\pi_1}\left(a(y)\begin{pmatrix}1&0\\\varpi^j&1\end{pmatrix}\right)
=\begin{cases}
\omega_1(y)|y|^{1/2}&\text{if }j=0,\\
0&\text{if }0<j\leq m.
\end{cases}\]
This means we only need to work on the case with $j=0$. The integral becomes
\[\lRS(\varphi_{\pi_1}, W_{\pi_2}, W_{\pi_3})
=\zeta_F(1)^{1/2} \frac{\zeta_F(2)}{\zeta_F(1)}
\int_{F^\times}
\omega_1(y)|y|^{1/2}
W_{\pi_2}\left(a(y)\begin{pmatrix}1&0\\1&1\end{pmatrix}\right)
W_{\pi_3}\left(a(y)\begin{pmatrix}1&0\\1&1\end{pmatrix}\right)
\ \frac{d^\times y}{|y|}.\]
By Lemma \ref{Lemma.newform1.Whittaker} we have
\begin{align*}&\ 
\lRS(\varphi_{\pi_1}, W_{\pi_2}, W_{\pi_3})
\\
=&\ \frac{\zeta_F(2)}{\zeta_F(1)^{1/2}}\sum_{r\geq -m}\int_{v(y)=r}
\omega_1(y)|y|^{1/2}\\
&\qquad\qquad\qquad\qquad\qquad
\cdot\frac{\zeta_F(2)^{1/2}}{\zeta_F(1)}
\omega_1^{-1}(y)|y|^{1/2}\psi(-y)
\epsilon(1,\omega_1^{-1}\omega_2,\psi)
W_{\pi_3}\left(a(y)\begin{pmatrix}1&0\\1&1\end{pmatrix}\right)
\ \frac{d^\times y}{|y|}\\
=&\ \frac{\zeta_F(2)^{3/2}}{\zeta_F(1)^{3/2}}
\epsilon(1,\omega_1^{-1}\omega_2,\psi)
\sum_{r\geq -m}\int_{v(y)=r}\psi(-y)
W_{\pi_3}\left(a(y)\begin{pmatrix}1&0\\1&1\end{pmatrix}\right)
\ d^\times y.
\end{align*}
To study oldforms one only need to replace $W_{\pi_3}$ with 
$\pi_3(\begin{smallmatrix}\varpi^{-l}&0\\0&1\end{smallmatrix})
W_{\pi_3}$ for $l\geq 1$.


\subsubsection{Proof of Proposition \ref{localconst.special}}

For $0\leq l\leq m-1$ 
(so the result works for both newforms and oldforms) and 
$\pi_3=\St_{\omega_3}$ with $\omega_3^2=\1$, 
Lemmas \ref{newform.induced} and \ref{oldform.Whittaker} show that
\[
\left(\pi_3\begin{pmatrix}\varpi^{-l}&0\\0&1\end{pmatrix}
W_{\pi_3}\right)
\left(a(y)\begin{pmatrix}1&0\\1&1\end{pmatrix}\right)
=-\zeta_F(2)^{-1/2}\cdot
\begin{cases}
q^{-1}\psi(y)
\omega_3(y\varpi^{-l})|y\varpi^{l}|
&\text{if }v(y)\geq -l-1\\
0&\text{if }v(y)\leq -l-2.
\end{cases}\]
Therefore
\begin{align*}
&\ \lRS(\varphi_{\pi_1}, W_{\pi_2}, 
\pi_3\begin{pmatrix}\varpi^{-l}&0\\0&1\end{pmatrix}
W_{\pi_3})\\
=&\ \frac{\zeta_F(2)^{3/2}}{\zeta_F(1)^{3/2}}
\epsilon(1,\omega_1^{-1}\omega_2,\psi)
\sum_{r\geq -l-1}\int_{v(y)=r}\psi(-y)
\zeta_F(2)^{-1/2}(-q^{-1})\psi(y)\omega_3(y\varpi^{-l})|y\varpi^{l}|
\ d^\times y\\
=&\ -q^{-1}\frac{\zeta_F(2)}{\zeta_F(1)^{3/2}}
\epsilon(1,\omega_1^{-1}\omega_2,\psi)\omega_3(\varpi^{-2l})
\sum_{r'\geq -1}\int_{v(y')=r'}\omega_3(y')|y'|\ d^\times y'\quad(y'=y\varpi^{l})\\
=&\ -q^{-1}\frac{\zeta_F(2)}{\zeta_F(1)^{3/2}}
\epsilon(1,\omega_1^{-1}\omega_2,\psi)
\omega_3(\varpi^{-2l})
\sum_{r\geq -1}\int_{\cO_F^\times}\omega_3(\varpi^ru)q^{-r}\ d^\times u\\
=&\ -q^{-1}\frac{\zeta_F(2)}{\zeta_F(1)^{3/2}}
\epsilon(1,\omega_1^{-1}\omega_2,\psi)
\omega_3(\varpi^{-2l})
\sum_{r\geq -1}\omega_3(\varpi)^rq^{-r}\\
=&\ -\frac{\zeta_F(2)}{\zeta_F(1)^{3/2}}
\epsilon(1,\omega_1^{-1}\omega_2,\psi)
L(1,\omega_3)
\omega_3(\varpi^{-2l-1}),\quad\text{ for }0\leq l\leq m-1.
\end{align*}
By \eqref{addcharintram} one has
\[|\epsilon(1,\omega_1^{-1}\omega_2,\psi)|
=|\epsilon(\tfrac 12,\omega_1^{-1}\omega_2,\psi)
q^{-c(\omega_1^{-1}\omega_2)/2}|
=q^{-m/2}.\]
So the numerator in the local constant is
\[I(\varphi\otimes\tilde\varphi)=q^{-m}\frac{\zeta_F(2)^2}{\zeta_F(1)^3}
L(1,\omega_3)^2.\]

The denominator in $I'(\varphi\otimes\tilde\varphi)$ is given by
\[\langle\varphi,\tilde\varphi\rangle
=\langle W_{\pi_1},\widetilde{W}_{\pi_1}\rangle
\langle W_{\pi_2},\widetilde{W}_{\pi_2}\rangle
\langle W_{\pi_3},\widetilde{W}_{\pi_3}\rangle.\]
By definition we have
\[\langle W_{\pi_1},\widetilde{W}_{\pi_1}\rangle
=\int_{v(y)\geq 0}\left|\frac{\zeta_F(2)^{1/2}}{\zeta_F(1)}
\omega_2(y)|y|^{1/2}\right|^2\ d^\times y
=\frac{\zeta_F(2)}{\zeta_F(1)^2}\int_{\cO_F}|y|\ d^\times y
=\frac{\zeta_F(2)}{\zeta_F(1)},\]
and hence $\langle W_{\pi_1},\widetilde{W}_{\pi_1}\rangle
=\langle W_{\pi_2},\widetilde{W}_{\pi_2}\rangle
=\zeta_F(2)/\zeta_F(1)$; also
\[\langle W_{\pi_3},\widetilde{W}_{\pi_3}\rangle
=\int_{v(y)\geq 0}\left|\zeta_F(2)^{-1/2}
\omega_3(y)|y|\right|^2\ d^\times y
=\frac{1}{\zeta_F(2)}
\int_{\cO_F}|y|^2\ d^\times y
=1.\]
We get that
\[\frac{I(\varphi\otimes\tilde\varphi)}
{\langle\varphi,\tilde\varphi\rangle}
=\frac{q^{-m}\frac{\zeta_F(2)^2}{\zeta_F(1)^3}L(1,\omega_3)^2}
{\left(\frac{\zeta_F(2)}{\zeta_F(1)}\right)^2\cdot 1}
=q^{-m}\frac{L(1,\omega_3)^2}{\zeta_F(1)}.\]

The local $L$-factors are defined 
in the same way as in \cite{humphries2020random}:
in this case
\begin{gather*}
L(s,\pi_1\otimes\pi_2\otimes\pi_3)
=L(s+\tfrac 12,\omega_3)^2,\\
L(s,\pi_1,\Ad)=L(s,\pi_2,\Ad)=\zeta_F(s),\quad
L(s,\pi_3,\Ad)=\zeta_F(s+1).
\end{gather*}
We can simplify that 
\[I'(\varphi\otimes\tilde\varphi)
=q^{-m}\frac{\zeta_F(1)}{\zeta_F(2)}
=q^{-m}(1+q^{-1}).\]


\subsubsection{Proof of Proposition \ref{localconst.unram}}

For $\pi_3=\omega_3\boxplus\omega_3^{-1}$ and $0\leq l\leq m$, we have shown that
\begin{multline*}
\left(\pi_3\begin{pmatrix}\varpi^{-l}&0\\0&1\end{pmatrix}
W_{\pi_3}\right)
\left(a(y)\begin{pmatrix}1&0\\1&1\end{pmatrix}\right)\\
=\frac{\zeta_F(2)^{1/2}}{\zeta_F(1)L(1,\omega_3^2)}
\begin{cases}
\psi(y)|\varpi^l y|^{1/2}\sum\limits_{i+i'=v(y)+l}
\omega_3(\varpi^i)\omega_3^{-1}(\varpi^{i'})
&\text{if }v(y)\geq -l,\\
0&\text{if }v(y)< -l.
\end{cases}
\end{multline*}
Then
\begin{align*}&\ 
\lRS(\varphi_{\pi_1}, W_{\pi_2}, 
\pi_3\begin{pmatrix}\varpi^{-l}&0\\0&1\end{pmatrix}W_{\pi_3})\\
=&\ \frac{\zeta_F(2)^{3/2}}{\zeta_F(1)^{3/2}}
\epsilon(1,\omega_1^{-1}\omega_2,\psi)
\sum_{r\geq -m}\int_{v(y)=r}\psi(-y)
\left(\pi_3
\begin{pmatrix}\varpi^{-l}&0\\0&1\end{pmatrix}
W_{\pi_3}\right)
\left(a(y)\begin{pmatrix}1&0\\1&1\end{pmatrix}\right)
\ d^\times y\\
=&\ \frac{\zeta_F(2)^2}{\zeta_F(1)^{5/2}}
\epsilon(1,\omega_1^{-1}\omega_2,\psi)
\sum_{r\geq -l}\int_{v(y)=r}
L(1,\omega_3^2)^{-1}
|\varpi^l y|^{1/2}\sum_{i+i'=v(y)+l}
\omega_3(\varpi^i)\omega_3^{-1}(\varpi^{i'})
\ d^\times y\\
=&\ \frac{\zeta_F(2)^2}{\zeta_F(1)^{5/2}}
\epsilon(1,\omega_1^{-1}\omega_2,\psi)
L(1,\omega_3^2)^{-1}
\sum_{r'\geq 0}\int_{v(y')=r'}
|y'|^{1/2}\sum_{i+i'=r'}
\omega_3(\varpi^i)\omega_3^{-1}(\varpi^{i'})
\ d^\times y'\quad(y'=y\varpi^l)\\
=&\ \frac{\zeta_F(2)^2}{\zeta_F(1)^{5/2}}
\epsilon(1,\omega_1^{-1}\omega_2,\psi)
L(1,\omega_3^2)^{-1}
\sum_{r'\geq 0}q^{-r'/2}\sum_{i+i'=r'}
\omega_3(\varpi^i)\omega_3^{-1}(\varpi^{i'})\\
=&\ \frac{\zeta_F(2)^2}{\zeta_F(1)^{5/2}}
\epsilon(1,\omega_1^{-1}\omega_2,\psi)
L(1,\omega_3^2)^{-1}
\sum_{i\geq 0}\sum_{i'\geq 0}
\omega_3(\varpi^i)q^{-i/2}\omega_3^{-1}(\varpi^{i'})q^{-i'/2}\\
=&\ \frac{\zeta_F(2)^2}{\zeta_F(1)^{5/2}}
\epsilon(1,\omega_1^{-1}\omega_2,\psi)
L(1,\omega_3^2)^{-1}
(1-\omega_3(\varpi)q^{1/2})^{-1}(1-\omega_3^{-1}(\varpi)q^{1/2})^{-1}\\
=&\ \frac{\zeta_F(2)^2}{\zeta_F(1)^{5/2}}
\epsilon(1,\omega_1^{-1}\omega_2,\psi)
L(1,\omega_3^2)^{-1}
L(\tfrac 12,\omega_3)L(\tfrac 12,\omega_3^{-1}).
\end{align*}
We now get the numerator
\[I(\varphi\otimes\tilde\varphi)=q^{-m}\frac{\zeta_F(2)^4}{\zeta_F(1)^5}
L(1,\omega_3^2)^{-1}L(1,\omega_3^{-2})^{-1}
L(\tfrac 12,\omega_3)^2L(\tfrac 12,\omega_3^{-1})^2.\]

For the denominator, 
the normalization of newform $\varphi_{\pi_3}$ implies 
$\langle\varphi_{\pi_3},\tilde\varphi_{\pi_3}\rangle=1$.
Then \[\langle W_{\pi_3},\widetilde{W}_{\pi_3}\rangle=1.\]
Recall that
\[\langle W_{\pi_1},\widetilde{W}_{\pi_1}\rangle=
\langle W_{\pi_2},\widetilde{W}_{\pi_2}\rangle
=\frac{\zeta_F(2)}{\zeta_F(1)};\]
therefore
\[\frac{I(\varphi\otimes\tilde\varphi)}
{\langle\varphi,\tilde\varphi\rangle}
=q^{-m}\frac{\zeta_F(2)^2
L(\tfrac 12,\omega_3)^2L(\tfrac 12,\omega_3^{-1})^2}
{\zeta_F(1)^3L(1,\omega_3^2)L(1,\omega_3^{-2})}.\]

Recall that the local $L$-factors are given by 
\[L(s,\pi_1\otimes\pi_2\otimes\pi_3)
=L(s,\omega_3)^2L(s,\omega_3^{-1})^2,\]
\[L(s,\pi_1,\Ad)=L(s,\pi_2,\Ad)=\zeta_F(s),\]
\[L(s,\pi_3,\Ad)=\zeta_F(s)L(s,\omega_3^2)L(s,\omega_3^{-2}).\]
One can simplify that
\[I'(\varphi\otimes\tilde\varphi)
=q^{-m}.\]


\subsubsection{Proof of Proposition \ref{localconst.supercuspidal}}

For $\pi_3$ supercuspidal
(with $2\leq c(\pi_3)\leq c(\chi_D)=m$), 
$W_{\pi_3}\left(a(y)(\begin{smallmatrix}1&0\\1&1\end{smallmatrix})\right)$
is supported only at $v(y) =-c(\pi_3)$.
By Lemma \ref{sc.new.int} we see
\[\int_{v(y)=-c(\pi_3)}
W_{\pi_3}\left(a(y)\begin{pmatrix}1&0\\1&1\end{pmatrix}\right)\psi(-y)
\ d^\times y
=C_{\1},\]
where $C_{\1}=\epsilon(\tfrac 12,\pi_3,\psi)=\pm 1$.
Moreover, for the oldforms, we have shown 
in Lemma \ref{oldform.Whittaker.supercuspidal} that
$\left(\pi_3(\begin{smallmatrix}\varpi^{-l}&0\\0&1\end{smallmatrix})
W_{\pi_3}\right)
\left(a(y)(\begin{smallmatrix}1&0\\1&1\end{smallmatrix})\right)$
is supported only at $v(y)=-l-c(\pi_3)$,
and for $l\geq 0$ we also have
\[
\int_{v(y)=-l-c(\pi_3)}
\left(\pi_3\begin{pmatrix}\varpi^{-l}&0\\0&1\end{pmatrix}
W_{\pi_3}\right)
\left(a(y)\begin{pmatrix}1&0\\1&1\end{pmatrix}\right)
\psi(-y)\ d^\times y=C_{\1}.
\]
So in both cases we have
\[\lRS(\varphi_{\pi_1}, W_{\pi_2}, 
\pi_3\begin{pmatrix}\varpi^{-l}&0\\0&1\end{pmatrix}
W_{\pi_3})
=\frac{\zeta_F(2)^{3/2}}{\zeta_F(1)^{3/2}}\cdot
\epsilon(1,\omega_1^{-1}\omega_2,\psi^{-1})\cdot (\pm 1).\]
The numerator is
\[I(\varphi\otimes\tilde\varphi)=
q^{-m}\frac{\zeta_F(2)^3}{\zeta_F(1)^3}.\]

For the denominator, recall that
\[\langle W_{\pi_1},\widetilde{W}_{\pi_1}\rangle=
\langle W_{\pi_2},\widetilde{W}_{\pi_2}\rangle=\frac{\zeta_F(2)}{\zeta_F(1)},
\quad \langle W_{\pi_3},\widetilde{W}_{\pi_3}\rangle=1;\]
and hence 
\begin{equation}\label{eqn.proveagain}
\frac{I(\varphi\otimes\tilde\varphi)}
{\langle\varphi,\tilde\varphi\rangle}
=q^{-m}\frac{\zeta_F(2)}{\zeta_F(1)}.\end{equation}

The local L-factors are given by
\[L(s,\pi_1\otimes\pi_2\otimes\pi_3)
=1;\quad
L(s,\pi_1,\Ad)=L(s,\pi_2,\Ad)=\zeta_F(s);\]
let $\eta$ be the (nontrivial) unramified quadratic character of $F^\times$.
By \cite[Corollary 1.3]{gelbart1978relation} we know
\[L(s,\pi_3,\Ad)=\begin{cases}
1&\text{if }\pi_3\not\simeq\pi_3\otimes\eta,\\
(1+q^{-s})^{-1}&\text{if }\pi_3\simeq\pi_3\otimes\eta.
\end{cases}\]
One can simplify that
\[I'(\varphi\otimes\tilde\varphi)
=q^{-m}(1+q^{-1})L(1,\pi_3,\Ad)
=\begin{cases}
q^{-m}(1+q^{-1})&\text{if }\pi_3\not\simeq\pi_3\otimes\eta,\\
q^{-m}&\text{if }\pi_3\simeq\pi_3\otimes\eta.
\end{cases}\]


\subsubsection{Direct calculation by matrix coefficients}
One can also calculate the local constants 
by the methods used in \cite{hu2017triple}, 
which is to calculate directly the matrix coefficients: 
by definition,
\[\frac{I(\varphi\otimes\tilde\varphi)}
{\langle\varphi,\tilde\varphi\rangle}=
\int_{Z(F)\backslash\GL_2(F)}
\Phi_{\pi_1}(g)\Phi_{\pi_2}(g)\Phi_{\pi_3}(g)\ dg,\]
where $\Phi_{\pi}$ denotes the normalized matrix coefficient
\[\Phi_{\pi}(g)=\frac 1{\langle W_\pi,\widetilde{W}_\pi\rangle}
\int_{F^\times} 
W_\pi\left(a(y)g\right)
\widetilde{W}_\pi(a(y))
\ d^\times y.\]

We consider the case when $\pi_3$ is supercuspidal for example.
Let $b = (\begin{smallmatrix}y&x\\0&1\end{smallmatrix})=n(x)a(y)$ 
for some $y\in F^\times$, $x \in F$.
For $\pi_1,\pi_2$, 
one can generalize the results in \cite{humphries2020random} 
and show that 
\begin{itemize}
	\item $\Phi_{\pi_1}(b)$ is equal to
\[\begin{cases}
\omega_2(y)|y|^{1/2}&v(y)\geq 0,\ v(x)\geq 0\\
\omega_2(y)|y|^{-1/2}&v(y)\leq 0,\ v(x)\geq v(y)\\
0&\text{otherwise}.
\end{cases}\]
	\item $\Phi_{\pi_1}(b(\begin{smallmatrix}1&0\\1&1\end{smallmatrix}))$ is equal to
\[\begin{cases}
\omega_1(y)|y|^{1/2}
\omega_1^{-1}\omega_2(x+y)|x+y|^{-1}&
\text{if }v(x+y)\leq \min\{-m,v(y)\}\\
0&\text{otherwise}.
\end{cases}\]
	\item For $0<j<m$, 
	$\Phi_{\pi_1}(b(\begin{smallmatrix}1&0\\\varpi^j&1\end{smallmatrix}))$ is equal to
\[\begin{cases}
\omega_2(y)|y|^{-1/2}\omega_1^{-1}\omega_2(1+x\varpi^j/y)&
\text{if }v(y)\leq j-m,\ v(x)\geq v(y)+m-j-1,\\
0&\text{otherwise.}
\end{cases}\]
\end{itemize}
And $\Phi_{\pi_2}$ is the complex conjugation of $\Phi_{\pi_1}$.
For $\pi_3$ supercuspidal, \cite[Proposition 2.19]{hu2017triple} shows:
\begin{itemize}
	\item $\Phi_{\pi_3}(b)$ is supported on $v(y)=0$ and $v(x)\geq -l-1$;
		\[\Phi_{\pi_3}(b)=\begin{cases}
		1&\text{if }v(y)=0,\ v(x)\geq -l,\\
		-\frac 1{q-1}&\text{if }v(y)=0,\ v(x)= -l-1;\end{cases}\]
	\item for $j=c(\pi_3)+l-1$, 
		$\Phi_{\pi_3}\left(b(\begin{smallmatrix}1&0\\\varpi^j&1\end{smallmatrix})\right)$
		is supported on $v(y)=0$, 
		$v(x)\geq -l-1$, and 
		\[\Phi_{\pi_3}
		\left(b\begin{pmatrix}1&0\\\varpi^{c(\pi_3)+l-1}&1\end{pmatrix}\right)
		=-\frac 1{q-1}\quad\text{if }v(x)\geq -l;\]
		\[\int_{v(x)=-l-1}\Phi_{\pi_3}
		\left(b\begin{pmatrix}1&0\\\varpi^{c(\pi_3)+l-1}&1\end{pmatrix}\right)\ dx
		=\frac {q^l}{q-1}\quad\text{if }v(x)\geq -l;\]
	\item for $c(\pi_3)<4$ and $0\leq j<c(\pi_3)+l-1$,
		$\Phi_{\pi_3}
		\left(b(\begin{smallmatrix}1&0\\\varpi^j&1\end{smallmatrix})\right)$
		is supported on $v(y)=\min\{0,2j-c(\pi_3)-2l\}$, 
		$v(x)=j-c-2l$;
		it is of level $c(\pi_3)+l-j$ as a function in $y$.
\end{itemize}

Since $W_\pi$ is right $K_1(\fp^{m})$-invariant, 
\cite[Lemma 2.2]{hu2016cuspidal}
implies that $I(\varphi\otimes\tilde\varphi)
/\langle\varphi,\tilde\varphi\rangle$ is equal to
\[\sum_{j=0}^m A_j\int_{Z(F)\backslash B(F)}
\prod_{i=1}^3\Phi_{\pi_i}\left(b
\begin{pmatrix}1&0\\\varpi^j&1\end{pmatrix}
\right)\ db,
\quad 
A_j=\frac{\zeta_F(2)}{\zeta_F(1)}\cdot
\begin{cases}
1&\text{if }j=0\\q^{-j}\zeta_F(1)^{-1}&\text{if }0<j<m\\
q^{-m}&\text{if }j=m,
\end{cases}\]
where $b = n(x)a(y)$ 
and $db = |y|^{-1}d^\times y\ dx$.
Here
\[\begin{split}
\prod_{i=1}^3\Phi_{\pi_i}\left(b\right)
&=\begin{cases}
|y|\Phi_{\pi_3}&v(y)\geq 0,\ v(x)\geq 0\\
|y|^{-1}\Phi_{\pi_3}&v(y)\leq 0,\ v(x)\geq v(y)\\
0&\text{otherwise}
\end{cases}\\
&=\begin{cases}
1&v(y)= 0,\ v(x)\geq 0\\
0&\text{otherwise}.
\end{cases}
\end{split}\]
So
\[\int_{Z(F)\backslash B(F)}
\prod_{i=1}^3\Phi_{\pi_i}\left(b\right)\ db
=\int_{v(y)= 0}\frac{d^\times y}{|y|}\int_{v(x)\geq 0}dx
=1.\]

Next we show that all other terms (when $0\leq j<m$) vanish. For $j=0$,
\[\prod_{i=1}^3\Phi_{\pi_i}\left(b
\begin{pmatrix}1&0\\1&1\end{pmatrix}\right)
=\begin{cases}
\dfrac{|y|}{|x+y|^{2}}\Phi_{\pi_3}&
\text{if }v(x+y)\leq \min\{-m,v(y)\},\\
0&\text{otherwise},
\end{cases}\]
with
\[\Phi_{\pi_3}\left(b
\begin{pmatrix}1&0\\1&1\end{pmatrix}\right)
=\begin{cases}
C_{\1}&\text{if }v(y)=- c-2l,\ 
x\in -y+\varpi^{-l}\cO_F,\\
-\frac {C_{\1}}{q-1}&\text{if }v(y)=- c-2l,\ 
x\in -y+\varpi^{-l-1}\cO_F^\times,\\
0&\text{otherwise}
\end{cases}\]
is supported only on $v(y)=-c-2l$, $v(x+y)\geq -l-1$
(so $v(x+y)\leq v(y)$ cannot happen).
Therefore \[\prod_{i=1}^3\Phi_{\pi_i}\left(b
\begin{pmatrix}1&0\\1&1\end{pmatrix}\right)=0.\]

For $c+l-1\leq j<m$, $\Phi_{\pi_3}\left(b
(\begin{smallmatrix}1&0\\\varpi^j&1\end{smallmatrix})\right)$ 
is supported on $v(y)=0$. 
Since $j-m<0$, \[\prod_{i=1}^3\Phi_{\pi_i}\left(b
\begin{pmatrix}1&0\\\varpi^j&1\end{pmatrix}\right)=0.\]

For $0<j<c+l-1$,
\[\left.\begin{array}{r}
v(y)\leq j-m\ (\text{which is }<0)\\v(y)=\min\{0,2j-c-2l\}
\end{array}\right\}\Rightarrow
2j-c-2l\leq j-m
\Rightarrow j\leq c+2l-m,
\]
which is possible only if $l>\tfrac 12 (m-c)$ 
(recall that $0\leq l\leq m-c$).
Under this assumption $\Phi_{\pi_3}\left(b
(\begin{smallmatrix}1&0\\\varpi^j&1\end{smallmatrix})\right)$ 
is supported on
\[v(y)=2j-c-2l,\quad v(x) = j - c - 2l;\]
but in this case $v(x)\geq v(y)+m-j-1$ does not hold.
That means, on the support of $\Phi_{\pi_3}\left(b
(\begin{smallmatrix}1&0\\\varpi^j&1\end{smallmatrix})\right)$,
$\Phi_{\pi_1}\left(b
(\begin{smallmatrix}1&0\\\varpi^j&1\end{smallmatrix})\right)$ vanishes.
So again
\[\prod_{i=1}^3\Phi_{\pi_i}\left(b
\begin{pmatrix}1&0\\\varpi^j&1\end{pmatrix}\right)=0.\]

In conclusion
\[\frac{I(\varphi\otimes\tilde\varphi)}
{\langle\varphi,\tilde\varphi\rangle}
=\frac{\zeta_F(2)}{\zeta_F(1)}
q^{-m}\int_{Z(F)\backslash B(F)}
\prod_{i=1}^3\Phi_{\pi_i}\left(b\right)\ db
=q^{-m}\frac{\zeta_F(2)}{\zeta_F(1)}.\]
One can continue from \eqref{eqn.proveagain} and complete the proof.

\section*{Acknowledgements}
The author would like to thank Bingrong Huang for suggesting this problem, 
and to thank Zihao Wang and Hongbo Yin for helpful discussions. 
The author also thanks the anonymous referees 
for making helpful comments on an earlier version
which led to improvement of the exposition.
This research was completed while the author was supported by 
the National Key Research and Development Program of China (No. 2021YFA1000700).

\bibliographystyle{alpha}
\bibliography{CMFormsBib}
\end{document}